\definecolor{darkgreen}{rgb}{0,0.4,0}
\definecolor{BrickRed}{rgb}{0.65,0.08,0}
\newcommand{\ct}{c}
\newcommand{\rt}{r}
\newcommand{\Rgf}{R}
\newcommand{\rgf}{r}
\newcommand{\Cgf}{C}
\newcommand{\cgf}{c}
\DeclareMathOperator\artanh{artanh}
\DeclareMathOperator\Cat{Cat}
\newcommand{\LandauO}{\mathcal{O}}
\newcommand{\Landauo}{o}
\newcommand{\Cc}{\mathcal{C}}
\newcommand{\Dc}{\mathcal{D}}
\newcommand{\Ic}{\mathcal{I}}
\newcommand{\Kc}{\mathcal{K}}
\newcommand{\Rc}{\mathcal{R}}
\newcommand{\Sc}{\mathcal{S}}
\newcommand{\Tc}{\mathcal{T}}
\newcommand{\C}{\mathbb{C}}
\newcommand{\N}{\mathbb{N}}
\newcommand{\Z}{{\mathbb Z}}
\newtheorem{theo}{Theorem}[section]
\newtheorem{lemma}[theo]{Lemma}
\newtheorem{prop}[theo]{Proposition}
\newtheorem{coro}[theo]{Corollary}
\newtheorem{definition}[theo]{Definition}
\newenvironment{example}[1][]{\refstepcounter{theo} \medskip \noindent \textbf{\textit{Example \thetheo #1:}} }{ \hfill $_{\blacksquare} $\\}
\newenvironment{remark}[1][]{\refstepcounter{theo}\medskip \noindent\textbf{\textit{Remark \thetheo #1:}} }{ \hfill $_{\blacksquare} $\\ }
\newcommand{\OEIS}[1]{\text{\href{https://oeis.org/#1}{{\small \tt #1}}}}
\newtheoremstyle{conjecture}{}{}{\it}{}{\color{purple}\bfseries}{}{ }{}
\theoremstyle{conjecture}
\def\input@path{{./}{pics}}  
\date{}
\begin{document}

\author{Antoine Genitrini\thanks{Sorbonne Universit\'es, UPMC Univ Paris 06, CNRS, LIP6 UMR 7606, 4 place Jussieu 75005 Paris.
	\url{Antoine.Genitrini@lip6.fr}}
 \and Bernhard Gittenberger\thanks{Technische Universit\"at Wien, Wiedner Hauptstra\ss e 8-10/104, 1040 Wien, Austria.
	\url{{Bernhard.Gittenberger, Michael.Wallner}@tuwien.ac.at}}
 \and Manuel Kauers\thanks{Institute for Algebra, Johannes Kepler University, Altenberger Strasse 69, 4040 Linz, Austria. \url{Manuel@Kauers.de}} \and Michael Wallner$^\ddag$}

\title{Asymptotic Enumeration of Compacted Binary Trees of Bounded Right Height\thanks{This research was supported by the Austrian Science Fund (FWF) grant SFB F50-03.}}

\maketitle

\let\thefootnote\relax\footnotetext{{\textcopyright}~2020. This manuscript version is made available under the CC-BY-NC-ND 4.0 license \url{http://creativecommons.org/licenses/by-nc-nd/4.0/}}

\begin{abstract}
A compacted binary tree is a graph created from a binary tree such that repeatedly occurring subtrees in the original tree are represented by pointers to existing ones, and hence every subtree is unique. Such representations form a special class of directed
acyclic graphs.  We are interested in the asymptotic number of compacted trees of given size,
where the size of a compacted tree is given by the number of its internal nodes. Due to its
superexponential growth this problem poses many difficulties. Therefore we
restrict our investigations to compacted trees of bounded right height, which is the maximal number of edges going to the right on any path from the root to a leaf. 

We solve the asymptotic 
counting problem for this class as well as a closely related, further simplified class. 

For this purpose, we develop a calculus on exponential generating functions for compacted trees of bounded
right height and for relaxed trees of bounded right height, which differ from compacted trees by
dropping the above described uniqueness condition. 
This enables us to derive a recursively defined sequence of differential equations
for the exponential generating functions. The coefficients can then be determined by performing a singularity
analysis of the solutions of these differential equations. 

Our main results are the computation of the asymptotic numbers of
relaxed as well as compacted trees of bounded right height and given size, when the size tends to
infinity. 

\medskip
\noindent\textbf{Keywords: }Compacted trees, Enumeration, D-finiteness, Analytic Combinatorics, Directed Acyclic Graphs, Chebyshev Polynomials.
\end{abstract}

\clearpage

\section{Introduction}
\label{sec:intro}

Most trees contain redundant information in form of repeated occurrences of the same subtree\footnote{In the rest of the paper,
a subtree of a given tree contains a root and all its descendants in the original tree. Such a substructure
is sometimes called a fringe subtree.}.
In order to get an efficient representation in memory,
these trees can be compacted by representing each occurrence only once. 
The removed subtrees
are replaced by pointers which link to the shared subtree.
Such structures are classically named as \emph{directed acyclic graphs} or short as \emph{DAGs}.

Flajolet \emph{et al.}, in their extended abstract~\cite{flss90}, analyzed in detail the gain in memory
of the compaction. Some proofs have been omitted and have not been stated later.
This gap was closed in~\cite{bousquet2015xml}, where the framework was extended to other DAG structures
and analyzed in the context of XML compression.
Furthermore, Ralaivaosaona and Wagner extended in \cite{RalaivaosaonaWagner2015Repeated} the analysis of the gained memory to simply generated families of trees.

The latter two papers on the quantitative analysis of the compaction process,
studied the transformation of a given set of trees of given size
to the set of compacted trees in order to determine the average rate of compaction.
We focus on a different aspect, namely the enumeration problem of compacted binary trees.
On the one hand, enumerating combinatorial structures is important if one wants to understand shape characteristics
of large random structures or for uniform random generation of those structures. On the other
hand, the enumeration of particular classes of DAGs is in general a difficult problem which
requires the extension of combinatorial methodology and is therefore interesting in its own right. 

One of the difficulties in the enumeration of compacted binary trees lies in the fact that a
compacted binary tree of size $n$ could arise from a binary tree whose size belongs to the whole interval $n,\dots, 2^n$. Thus, a brute-force approach is hopeless. 

The first papers about the enumeration of DAGs appeared in the 1970's. Robinson
presented two distinct approaches~\cite{Robinson70, Robinson77}
based either on some inclusion-exclusion method or on P\'olya's enumeration theory~\cite{Polya37}. 
Combining combinatorial arguments and analytic methods the asymptotic number of labeled DAGs was
determined in \cite{BRRW86}, for connected structures then in \cite{BR88}. 
The first investigation of shape
parameters seems to go back to McKay \cite{McKay89}. Recently, enumeration results for many
particular classes of DAGs can be found in the literature, see for instance \cite{BGG11, BGGG17, 
BGGJ13, GS05, liskovets2006exact, MSW15, St03, St04, Wa13}, as well as investigations on the (random) generation of particular DAGs, see \cite{ADK05, CD12, MDB01, MP04}. 

A now classical way for enumeration is the use of generating functions.
In this context, precisely for labeled structures (see paper~\cite{robinson1973dags}),
Robinson designed generating functions of a very particular nature to solve an asymptotic
counting problem concerning DAGs. The classical types of generating functions like ordinary and
exponential ones were not suited for the problem. 

We are facing the same problem in the enumeration of compacted trees. Indeed, due to the fact that
compacted trees are unlabeled combinatorial structures, which are moreover closely related to
plane trees, a treatment with ordinary generating functions will be the first choice. However, the 
fast growth of the counting sequence requires the use of exponential generating functions. In
order to be able to get asymptotic results, we will confine ourselves to certain subclasses of the
class of compacted trees as well as some related classes by relaxing certain conditions. Moreover,
we will develop a calculus for exponential generating functions designed for these classes.
Bounding the right height of our DAGs leads to a sequence of D-finite functions (see
\cite{KauersPaule11,stan99} for introductions to the subject) for which it is
possible to analyze their differential equations and obtain finally our main result. Likewise, 
in other enumeration problems for particular classes of DAGs bounding a certain parameter turned
intractable recurrences into D-finite ones. Examples are the enumeration of certain classes of 
lambda-terms \cite{BGG11, BGGJ13, BGGG17} or increasing series-parallel DAGs \cite{BDGP17}.

\subsubsection*{Plan of the paper}

Our combinatorial structures are based on the fundamental properties of the compaction procedure.
We will first analyze some properties of this classical procedure (linked to the \emph{common
subexpression problem}) in Section~\ref{sec:creating}.

Then we will define the basic concepts and state our main results in Section~\ref{sec:mainres},
see Theorems~\ref{theo:relaxededmain} and \ref{theo:compactedmain}. 

Some basic observations concerning the structure of compacted trees will then be presented in
Section~\ref{sec:struct}. 

These will help us to state a combinatorial and (most importantly) recursive specification
of the problem in Section~\ref{sec:recurrence}. A further important result is the derivation of 
\emph{a recurrence relation for the number of compacted binary trees}, see Theorem~\ref{theo:comprecursion}.
This recurrence is not classical at all, and we are not able to solve it explicitly.

Due to this fact, we follow yet a different approach in the remaining part of this work:
We will use exponential generating functions to model our problem, as the superexponential growth
rate of the counting sequence suggests, though we are dealing with
unlabeled combinatorial structures. Therefore, a new calculus translating certain set operations for
classes of compacted trees into algebraic operations of exponential generating functions will be
developed in Section~\ref{sec:operations}. 

Section~\ref{sec:relaxed} is devoted to a simplified problem, the study of the counting problem of
\emph{relaxed binary trees}. These DAGs are in a sense compacted trees where the restriction of
uniqueness on the subtrees is relaxed. In particular, compacted trees are a subset of relaxed
binary trees. With the same methods as used on compacted trees we are able to derive a recurrence
relation. However, this recurrence relation is as difficult as the first one for compacted trees.

A natural constraint for compacted trees seems to bound some specific depth limit, the so-called
right height. This is the maximal number of edges directed to the right which appear on any path
from the root to a leaf. In Section~\ref{sec:relaxed}, the calculus developed in
Section~\ref{sec:operations} enables us to derive a differential equation for the generating
function of relaxed trees for each bound $k$ on the right height. This sequence of D-finite 
differential equations follows a rather explicit recursive scheme, presented in
Theorem~\ref{theo:Dkprop} which allows us to analyze the dominant singularities of the solutions
of the differential equation for any $k$. Eventually, this strategy is successful and we are able to
determine the asymptotic number of relaxed binary trees of bounded right height. 

Finally, in Section~\ref{sec:compacted} we modify the results of the previous section to cover the
case of compacted trees as well. Again, we derive a sequence of D-finite differential equations where, as in
Section~\ref{sec:relaxed}, the dominant singularities of the generating function are regular
singularities of the differential equation. This allows us to extract the asymptotic behavior
of the counting sequence, which contains irrational powers of $n$. The necessary information is
directly extracted from the differential equations. Except for the first few, they do not have 
closed-form solutions.

\section{Creating a compacted tree}
\label{sec:creating}

Many problems in computer science and computer algebra involve redundant information.
A strategy to save memory is to store every instance only once and to point to already existing
instances, whenever an instance appears repeatedly.  
In \cite[Proposition~1]{flss90} a compression algorithm was presented, and it was shown that for a given tree of size $n$, its compacted form can be computed in expected time $\LandauO(n)$.
However, such procedures have been known since the $1970$'s (see~\cite{flss90,DowneySethiTarjan1980variations} and especially the ``value-number method'' in compiling~\cite[Section~6.1.2]{aho1986compilers}).
Figure \ref{fig:procUID} shows this procedure, which follows a top-down decomposition scheme
(i.e.~post-order traversal) of labeled binary trees.
Every node (or actually the subtree whose root is the respective node)
is associated with a ``unique identifier'' (\emph{uid}). 
Two subtrees are equivalent if and only if the uid's are the same. 

\begin{figure}[ht]
\noindent\makebox[\linewidth]{\rule{\textwidth}{0.4pt}}\\[-0.8\baselineskip]
\noindent\makebox[\linewidth]{\rule{\textwidth}{0.4pt}}
\texttt{function UID(T : tree) : integer;}\\[-0.5\baselineskip]
\noindent\makebox[\linewidth]{\rule{\textwidth}{0.4pt}}%
\vspace{-0.5\baselineskip}
\begin{verbatim}
global counter : integer, Table : list;
begin 
   if  T  =  nil 
      then  return(O); 
      else 
         triple  :=  <root(T),UID(left(T)),UID(right(T))>; 
         if  Found(triple,Table) 
            then  return(value_found); 
            else  counter  :=  counter+l; 
               Insert  pair  (triple,counter)  in  Table; 
               return(counter); 
         fi 
   fi 
end 
\end{verbatim}
\vspace{-1\baselineskip}
\noindent\makebox[\linewidth]{\rule{\textwidth}{0.4pt}}\\[-0.8\baselineskip]
\noindent\makebox[\linewidth]{\rule{\textwidth}{0.4pt}}
\caption{\small The \texttt{UID} procedure from \cite[Fig.~2]{flss90} which computes
``unique identifiers'' for all (fringe) subtrees of a given binary tree $T$. 
It is assumed that \texttt{counter} is initially set to $0$. 
Table is a global list that maintains associations between triples and already 
computed \texttt{UID}'s; it is also initially empty.
The function \texttt{root(T)} extracts the label of the root of tree $T$. }%
\label{fig:procUID}%
\end{figure}

We now give an example of the behavior of the procedure for an arithmetic expression. 

\begin{example}
 Consider the labeled tree necessary to store the arithmetic expression
 \texttt{(* (- (* x x) (* y y)) (+ (* x x) (* y y)))} which represents
 $(x^2-y^2)(x^2+y^2)$. The ``Table'', built by the \texttt{UID} procedure, contains
	\begin{align*}
		((x,0,0), 1),   && ((y,0,0), 3),   && ((-,2,4), 5),   && ((\times,5,6), 7), \\
		((\times,1,1), 2),   && ((\times,3,3), 4),   && ((+,2,4), 6),
	\end{align*}
	and the tree in its full and compacted version is shown in Figure \ref{fig:UIDex}.
\end{example}
	
\begin{figure}[ht]%
	\begin{center}
	\includegraphics[width=0.9\textwidth]{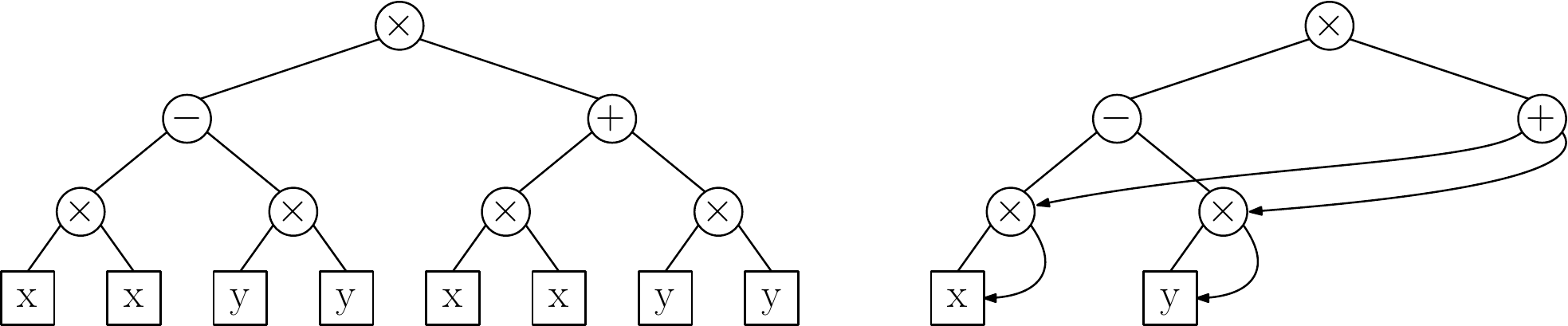}%
	\caption{\small Tree and compacted tree associated with 
	\texttt{(* (- (* x x) (* y y)) (+ (* x x) (* y y)))} computed by the 
	\texttt{UID} procedure from Figure \ref{fig:procUID}.}%
	\label{fig:UIDex}%
	\end{center}
\end{figure}

Motivated by this procedure, based on a post-order traversal of the tree,
we define an ad hoc DAG-structure, which we call a \emph{compacted binary tree},
that encodes the result of the compaction of the tree.
The trees under consideration are \emph{full binary} in the sense that their
nodes have either 0 or 2 children.
Furthermore, in the definition we refer to subtrees: a \emph{fringe subtree} or short \emph{subtree}
is the tree which corresponds to a node and all its descendants. 
In this paper we only consider such subtrees.

\begin{definition}
	\label{def:compactedbinarytree}
	A \emph{compacted binary tree} is a DAG computed by the \texttt{UID}
	procedure from a given full binary tree.
	Every edge leading to a subtree that has already been seen
	during the traversal is replaced by a new kind of edge, a \emph{pointer},
	to the already existing subtree. The \emph{size} of the compacted
	binary tree is defined by the number of its internal nodes. 
\end{definition}	 

In the sequel we will only consider full binary trees and their compacted forms.
Thus, the term \emph{compacted trees} means compacted binary trees. 
In Figure~\ref{fig:compacted_trees_n123}, we represent all compacted trees of size $0,1$, and $2$.


\begin{figure}[ht]
	\centering
	\includegraphics[width=0.7\textwidth]{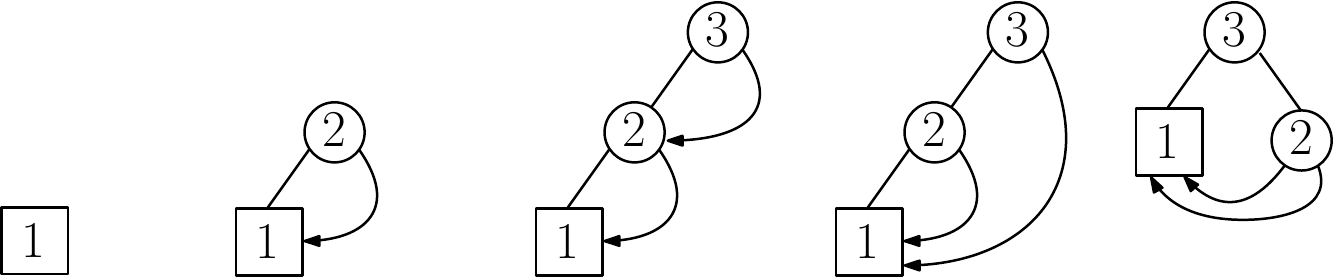}%
	\caption{\small All compacted trees of size $n=0,1,2$. The labels in the nodes are the uids of
the corresponding subtrees. But note: The labels are not belonging to the combinatorial objects.
Compacted trees are unlableled graphs.}
	\label{fig:compacted_trees_n123}
\end{figure}

The subclass of DAGs we are interested in is strongly influenced by properties of trees. 
In particular, compacted trees are connected and plane. The out-degree\footnote{For the terms out-
and in-degree, source, sink, and so on, we interpret an undirected edge as directed away from the
root, in accordance with a node-child relation.} of each node is equal to $2$,
except for the unique sink (leaf) for which it is $0$.
Furthermore, there is a unique source, which is the root.

The latter properties are induced by the full binary tree structure.
Next, we treat the specific properties of the \texttt{UID} procedure.
The result of the algorithm strongly depends on the chosen traversal.
In this case the post-order traversal is used --
but one could also consider a different one.
There are two important observations.
First of all, it has an important consequence on the pointers:
\begin{prop}
\label{prop:pointerrestriction}
In a compacted tree the pointers only point to previously discovered trees.
\end{prop}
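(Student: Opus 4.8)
The plan is to read the assertion directly off the \texttt{UID} procedure of Figure~\ref{fig:procUID} by exploiting the order in which subtrees receive their identifiers. First I would fix the convention that a subtree is \emph{discovered} at the moment the recursive call on its root completes and the corresponding triple is inserted into the \texttt{Table}. Because the procedure follows a post-order traversal, this insertion happens exactly once for each distinct subtree, and the value of \texttt{counter} allocated at that instant — the uid of the subtree — is strictly increasing in the order of discovery. Thus the uids equip the internal nodes of the resulting compacted tree with a total order that faithfully records the traversal.

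Next I would isolate the single place in the procedure where a pointer is produced, namely the branch in which \texttt{Found(triple,Table)} returns true and the procedure returns \texttt{value\_found} rather than allocating a fresh identifier. The key observation is that this branch can be taken only when the matching triple already lies in the \texttt{Table}. Since a triple enters the \texttt{Table} only at the moment its subtree is discovered, the target of the pointer must have been discovered at a strictly earlier stage of the traversal than the subtree currently being processed, and hence carries a strictly smaller uid. This is exactly the assertion that the pointer leads to a previously discovered tree.

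The argument is essentially a faithful transcription of the algorithm, so I do not anticipate a genuine obstacle; the only step demanding care is the bookkeeping that ties the traversal to the \texttt{Table}, i.e.\ the fact that the insertion of a triple (equivalently, the assignment of its uid) coincides with the completion of the post-order processing of that subtree, and that no pointer can reference a triple before it has been inserted. Once this synchronization between traversal order and \texttt{Table} contents is made explicit, the claim follows immediately.
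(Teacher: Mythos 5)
Your argument is correct and is exactly the reasoning the paper relies on: the paper states this proposition without a formal proof, treating it as an immediate consequence of the fact that the \texttt{UID} procedure only returns an existing identifier when the matching triple is already in the \texttt{Table}, which by the post-order traversal means that subtree was discovered earlier. Your write-up simply makes that observation explicit, so there is nothing to object to.
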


In other words, the ordering imposed by the traversal restricts the possible choices of the pointers. 

\begin{definition}
	For any compacted tree of size $n$, the \emph{spine} is the structure (with $n$ nodes)
	obtained from the compacted tree by deleting all pointers and the leaf.
\end{definition}
In the Figure~\ref{fig:spine}, from left to right, we see a compacted tree (without details on the pointers) and its spine. Furthermore, every distinct subtree is stored only once.
In terms of the corresponding compacted trees this translates into
uniqueness of every subtree.

\begin{figure}[htb]
	\centering
	\includegraphics[width=0.2\textwidth]{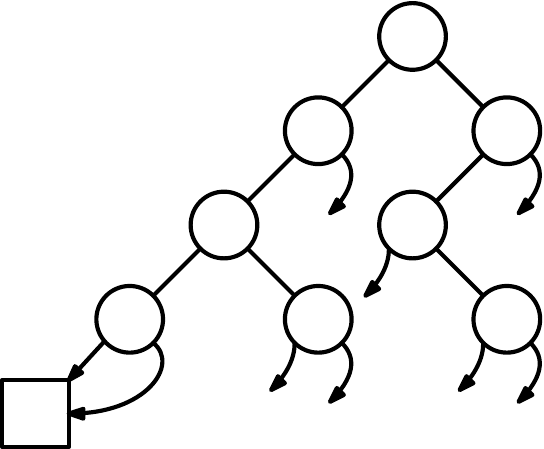}
	\qquad \qquad \qquad
	\includegraphics[width=0.2\textwidth]{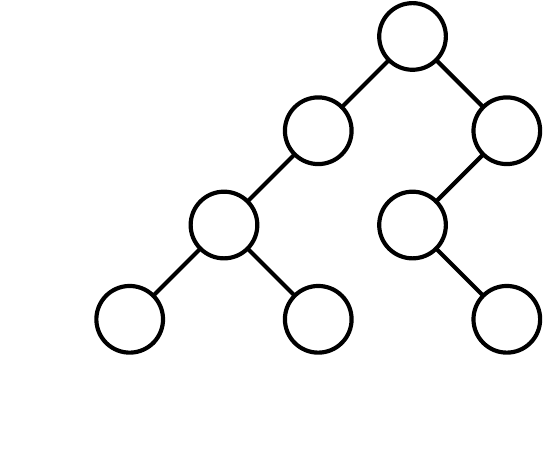}
	\caption{\small A compacted tree and its spine.}
	\label{fig:spine}
\end{figure}

\section{Main results}
\label{sec:mainres}

Before being able to state our main results we have to define further combinatorial classes.
Indeed, the uniqueness condition for compacted trees caused some difficulties in their
enumeration. So, we will first analyze a simpler class where we drop this condition. 

\begin{definition}
A \emph{relaxed compacted binary tree} (short \emph{relaxed binary tree}, or just \emph{relaxed
tree}), of size $n$ is a directed acyclic graph consisting
of a binary tree with $n$ internal nodes, one leaf, and $n$ pointers.
It is constructed from a binary tree of size $n$, where the first leaf
in a post-order traversal is kept and all other leaves are replaced by pointers.
These links may point to any node that has already been visited by the post-order traversal.

Obviously, the notion of spine adapts to the class of relaxed trees.
\end{definition}

In fact, let us give another way to interpret compacted trees: 
compacted trees are relaxed trees with the restriction that all nodes in the spine are the roots of unique subtrees of the full tree.
Note that this condition does not hold for all relaxed trees. In particular compare Figure~\ref{fig:relaxed_tree_3} for the smallest relaxed tree which is not a compacted tree. 

\begin{figure}[ht]
	\centering
	\includegraphics[width=0.55\textwidth]{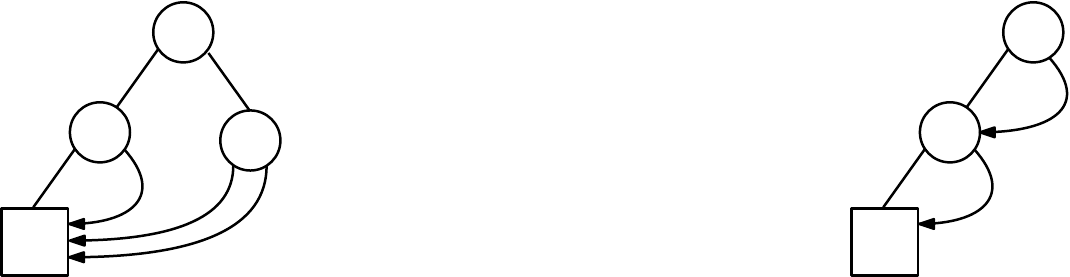}
	\caption{\small Left: the smallest relaxed tree that is not a compacted tree; right: the corresponding unique compacted tree.}
	\label{fig:relaxed_tree_3}
\end{figure}

The asymptotic enumeration of relaxed trees is still too complicated. We will derive recurrence
relations for their counting sequence as well as for the counting sequence of compacted trees. In
order to obtain asymptotic results, we restrict the right height. 

\begin{definition}
For any relaxed tree, we define its \emph{right height} to be the maximal number of right edges
on any path from the root to another node in the spine (of the relaxed or compacted tree under 
consideration).
The \emph{level} of a node is the number of right edges on the path from the root to this node. 
\end{definition}

Figure~\ref{fig:rightheight} introduces an example and a natural way of representing a relaxed tree
in order to emphasize these notions. It proves convenient to rotate the trees by $45$ degrees.
\begin{figure}[htb]
	\centering
	\includegraphics[width=0.2\textwidth]{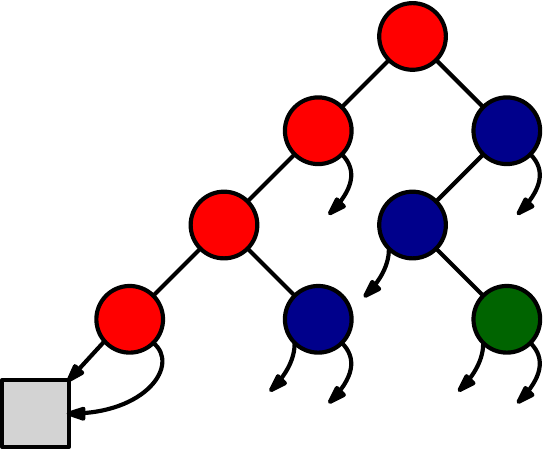}
	\qquad \qquad \qquad
	\includegraphics[width=0.26\textwidth]{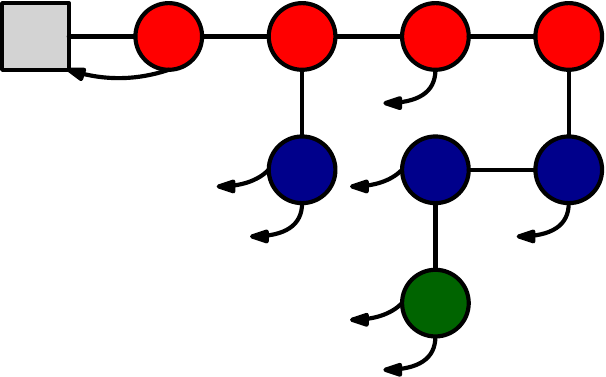}
	\caption{\small A compacted tree with right height~$2$.
	Nodes of level~$0$ are colored in red, nodes of level~$1$ in blue,
	and the node of level~$2$ in green. }
	\label{fig:rightheight}
\end{figure}

Bounding the right height defines a sequence of classes which follows a recursive construction
principle. We will eventually exploit this structure and obtain our main results, 
the asymptotic number of relaxed trees with $n$ internal nodes and the analogous result for
compacted trees.

\begin{theo}[Asymptotics of relaxed trees with bounded right height]
	\label{theo:relaxededmain}
	The number $r_{k,n}$ of relaxed trees with right height at most $k$ is for $n \to \infty$ asymptotically 
	equivalent to
	\begin{align*}
		r_{k,n} \sim \gamma_k  n! \left(4 \cos\left(\frac{\pi}{k+3}\right)^2\right)^{n}  n^{-k/2},
	\end{align*}
	where $\gamma_k$ is a positive constant which is independent of $n$.
\end{theo}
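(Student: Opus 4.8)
The plan is to derive, for each fixed bound $k$ on the right height, a linear differential equation satisfied by the exponential generating function $R_k(z) = \sum_n r_{k,n} \frac{z^n}{n!}$, and then perform a singularity analysis to extract the asymptotics. The key structural insight, already advertised in the introduction, is that bounding the right height produces a recursively defined family of D-finite functions indexed by $k$; I would first make this recursion explicit. A relaxed tree of right height at most $k$ decomposes along its spine into a level-$0$ part together with attachments that are themselves relaxed-tree-like structures of right height at most $k-1$. Using the calculus on exponential generating functions developed for these classes (the one referenced in Section~\ref{sec:operations}), I would translate this set-level decomposition into a differential equation relating $R_k$ to $R_{k-1}$, with the base case $R_0$ handled directly. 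This should yield the ``rather explicit recursive scheme'' of the kind stated in Theorem~\ref{theo:Dkprop}.

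**Next I would** locate and classify the dominant singularity of $R_k$. The striking appearance of $4\cos\bigl(\frac{\pi}{k+3}\bigr)$ in the growth rate is the signature of Chebyshev polynomials (consistent with the keyword list), so I expect the indicial/characteristic data of the differential equation to involve Chebyshev polynomials $U_{k}$ or $T_{k}$, whose largest root governs the radius of convergence. Concretely, since $r_{k,n} \sim \gamma_k\, n!\,\rho_k^{-n} n^{-k/2}$ with $\rho_k^{-1} = 4\cos\bigl(\frac{\pi}{k+3}\bigr)$, I would set $R_k(z) = \sum_n r_{k,n} z^n/n!$ and track instead the \emph{ordinary} generating behavior by removing the $n!$, i.e. study the function whose coefficients are $r_{k,n}/n!$; this function has radius of convergence $\rho_k = \bigl(4\cos\frac{\pi}{k+3}\bigr)^{-1}$, and the factor $n^{-k/2}$ together with the absence of a $\log$ indicates that $z=\rho_k$ is a regular singular point of the differential equation at which the local solution behaves like $(1 - z/\rho_k)^{k/2}$ up to analytic factors. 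I would read off the relevant exponent $k/2$ from the indicial equation at $\rho_k$.

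**The final step** is a standard transfer: having shown that the dominant singularity is a regular singularity of algebraic type with local exponent giving $(1-z/\rho_k)^{k/2}$, I would apply singularity analysis (transfer theorems for functions with isolated algebraic singularities) to obtain $[z^n]\,\widehat R_k(z) \sim \tilde\gamma_k\, \rho_k^{-n} n^{-k/2-1}$, and then reinstate the factorial via $r_{k,n} = n!\,[z^n]\widehat R_k(z)$, being careful that the exponent shift $-1$ from the transfer theorem combines with the normalization to leave the stated $n^{-k/2}$; the constant $\gamma_k$ is then whatever emerges from the local connection coefficient. One must also verify that $\rho_k$ is the \emph{unique} dominant singularity on the circle of convergence, which for these positive, aperiodic coefficient sequences should follow from a Pringsheim-type argument together with the structure of the singularities of the D-finite equation.

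\medskip
\noindent\textbf{Main obstacle.} The hard part will be controlling the singularity structure of $R_k$ \emph{uniformly enough to run the induction on $k$}: one must show that the recursively built differential equation has, at $z=\rho_k$, a regular singularity whose indicial exponents are exactly right to produce $(1-z/\rho_k)^{k/2}$ (and in particular that no logarithmic terms, no larger exponents, and no competing singularities inside the disc spoil the leading term), and that the connection constant $\gamma_k$ is genuinely nonzero. Establishing that the Chebyshev value $4\cos\bigl(\frac{\pi}{k+3}\bigr)$ is precisely the reciprocal radius — rather than merely an eigenvalue candidate — and that the half-integer exponent $k/2$ is the \emph{smallest} exponent appearing (so it dominates), is where the real work lies; this is presumably what Theorem~\ref{theo:Dkprop} is designed to supply, and I would lean on its explicit recursive form to push these spectral and indicial facts from $k-1$ to $k$.
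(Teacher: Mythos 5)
Your strategy is the paper's strategy: build the recursive family of operators $L_k$ (Theorems~\ref{theo:diffoprelaxed} and~\ref{theo:Dkprop}), identify the leading coefficient $\ell_{k,k}$ with a Chebyshev polynomial of the second kind whose smallest root is $\rho_k=\bigl(4\cos^2\frac{\pi}{k+3}\bigr)^{-1}$ (Lemmas~\ref{lem:chebyshev} and~\ref{lem:lkkproperties}), show that $\rho_k$ is a regular singular point, compute the indicial polynomial, and transfer. Your ``main obstacle'' paragraph correctly isolates where the work lies, and the uniqueness of the dominant singularity is indeed settled by the Chebyshev root structure (all roots of $\ell_{k,k}$ are real, positive and simple, and $\rho_k$ is the smallest), not by a Pringsheim argument alone.

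Two concrete points in your execution are off, though. First, the local exponent at $\rho_k$ is $\frac{k}{2}-1$, not $\frac{k}{2}$: the transfer theorem sends $(1-z/\rho_k)^{k/2-1}$ to $\rho_k^{-n}n^{-k/2}$ up to constants, and reinstating the $n!$ does not shift the polynomial factor, so your hope that ``the exponent shift $-1$ combines with the normalization'' cannot rescue an exponent of $k/2$. In the paper this value emerges from $\delta_1=\frac{k}{2}$ via Proposition~\ref{prop:indicialpoly} (which in turn needs the identity $\ell_{k,k-1}=\frac{k}{2}\ell_{k,k}'$), together with the order reduction of Lemma~\ref{lem:zeropolys}: the low-order coefficients of $L_k$ vanish identically, so one really analyzes an equation of order $\lceil k/2\rceil$ in $D^{\lfloor k/2\rfloor}R_k$, whose reduced indicial roots are $\{0,\dots,\lceil k/2\rceil-2\}$ together with $-1$ ($k$ even) or $-\tfrac12$ ($k$ odd). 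Without this reduction the exceptional root $\frac{k}{2}-1$ sits inside the consecutive integer roots $0,\dots,k-2$ and the resonance analysis is much harder to control. Second, your claim that the form $n^{-k/2}$ ``indicates the absence of a $\log$'' is wrong for even $k$: there the singular basis element is $(1-z/\rho_k)^{k/2-1}\log(1-z/\rho_k)$ (obtained by integrating a simple pole $\lfloor k/2\rfloor$ times), and the logarithm is \emph{essential}, since $(1-z/\rho_k)^{k/2-1}$ with a nonnegative integer exponent alone contributes nothing at order $n^{-k/2}$. The paper exhibits this explicitly for $k=4$. Neither slip invalidates the plan, but both would surface the moment you actually compute the indicial data, which is precisely the content of Theorem~\ref{theo:odeasymptregsing} and Corollary~\ref{coro:indicialpolyreduced}.
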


\begin{theo}[Asymptotics of compacted trees with bounded right height]
	\label{theo:compactedmain}
	The number $c_{k,n}$ of compacted trees with right height at most $k$ is for $n \to \infty$ asymptotically equivalent to
	\begin{align*}
		c_{k,n} \sim \kappa_k   n! 
		\left(4 \cos\left(\frac{\pi}{k+3}\right)^2\right)^{n} 
		n^{- \frac{k}{2} - \frac{1}{k+3} - \left(\frac{1}{4} - \frac{1}{k+3}\right)\cos\left(\frac{\pi}{k+3} \right)^{-2}} 
						,
	\end{align*}
	where $\kappa_k$ is a positive constant which is independent of $n$.
\end{theo}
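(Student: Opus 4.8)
The plan is to follow the blueprint already established for relaxed trees in Section~\ref{sec:relaxed} and adapt it so as to incorporate the uniqueness constraint, as announced for Section~\ref{sec:compacted}. Writing $C_k(z) = \sum_{n\ge 0} c_{k,n} z^n/n!$ for the exponential generating function of compacted trees of right height at most $k$, the first step is to use the calculus of Section~\ref{sec:operations} to translate the recursive level-by-level construction of a compacted tree into a functional equation for $C_k$. As for relaxed trees, bounding the right height by $k$ couples the generating functions of the individual levels into a triangular system, which I expect to collapse into a single linear ordinary differential equation with polynomial coefficients; hence $C_k$ is D-finite. The essential difference from the relaxed case is that the pointers of a compacted tree may only target the \emph{distinct} subtrees already present in the spine, so the operator counting admissible pointer targets must be corrected. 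I would encode this correction exactly as in the recurrence of Theorem~\ref{theo:comprecursion}, obtaining a differential equation of the same recursive shape in $k$ as the one in Theorem~\ref{theo:Dkprop}, but with modified lower-order coefficients.

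The second step is the singularity analysis. Because the uniqueness correction affects only subdominant terms of the defining operator, the leading symbol of the differential equation, and therefore the location of the dominant singularity, is unchanged: the characteristic data are governed by the same Chebyshev polynomials, whose relevant root fixes $\rho_k = 1/(4\cos(\pi/(k+3)))$. I would first confirm that $\rho_k$ is the unique dominant singularity on the circle of convergence and that it is a \emph{regular} singular point of the differential equation, so that Frobenius theory applies and $C_k(z)$ admits a local expansion of the form $(1 - z/\rho_k)^{\alpha_k}$ times an analytic factor (with any logarithmic corrections to be ruled out or handled separately).

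The third and decisive step is to pin down the exponent $\alpha_k$ from the indicial equation at $\rho_k$. For relaxed trees the corresponding exponent yields the factor $n^{-k/2}$ of Theorem~\ref{theo:relaxededmain}, i.e.\ an indicial root $k/2 - 1$; the uniqueness correction shifts the indicial equation, and I expect the shift to contribute precisely
\begin{align*}
	\frac{1}{k+3} + \left(\frac14 - \frac{1}{k+3}\right)\frac{1}{\cos^2\left(\frac{\pi}{k+3}\right)},
\end{align*}
so that the resulting power of $n$ matches the statement. The transcendental $\cos^{-2}(\pi/(k+3))$ term is exactly the signature of a regular singular point whose indicial exponent depends on the ratio of the leading coefficients of the operator evaluated at $\rho_k$; expressing this ratio in terms of the Chebyshev root is where the delicate bookkeeping lies. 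Once $\alpha_k$ is known, a standard transfer theorem of singularity analysis gives $[z^n]C_k(z) \sim \kappa_k\, \rho_k^{-n} n^{-\alpha_k - 1}$ (absorbing the $\Gamma(-\alpha_k)$ factor into $\kappa_k$), and multiplying by $n!$ to pass from the exponential generating function back to $c_{k,n}$ yields the claimed asymptotics with $\rho_k^{-1} = 4\cos(\pi/(k+3))$.

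The main obstacle is twofold, and both parts live in the compacted-specific correction. First, deriving the differential equation itself is delicate, since uniqueness of subtrees is a global constraint that is not obviously local and must be reconciled with the level-by-level calculus of Section~\ref{sec:operations}. Second, extracting the exact indicial exponent $\alpha_k$ is the hard part, because the correction interacts with the Chebyshev structure to produce the transcendental $\cos^{-2}(\pi/(k+3))$ contribution rather than a rational shift. Verifying that no resonance between indicial roots introduces spurious logarithmic factors, and that $\kappa_k$ is genuinely nonzero, are the remaining points requiring care.
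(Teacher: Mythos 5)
Your proposal follows essentially the same route as the paper: the uniqueness constraint is localized to cherries and encoded as a corrected pointer-counting operator (the paper's ``cherry operator''), yielding a recursive family of D-finite operators $M_k$ whose leading coefficient coincides with the relaxed case, so the dominant singularity $\rho_k$ is the same Chebyshev root, and the exponent shift is exactly the ratio $\delta_1 = m_{k,k-1}(\rho_k)/m'_{k,k}(\rho_k)$ evaluated via Chebyshev identities, with logarithms excluded because $\delta_1$ is irrational. This matches the paper's argument step for step, so no further comparison is needed.
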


Therefore, we can also answer the question (at least asymptotically) of how many relaxed trees are actually compacted trees.
Combining Theorems~\ref{theo:relaxededmain} and \ref{theo:compactedmain} we get the following result.

\begin{coro}[Proportion of compacted among relaxed trees]
	\label{coro:compamongrelaxed}
	Let $c_{k,n}$ ($r_{k,n}$) be the number of compacted (relaxed) binary trees with right height at most $k$. Then, for $n \to \infty$ we have
	\begin{align*}
		\frac{c_{k,n}}{r_{k,n}} \sim 
			\frac{\kappa_k}{\gamma_k} n^{- \frac{1}{k+3} - \left(\frac{1}{4} - \frac{1}{k+3}\right)\frac{1}{\cos^2\left(\frac{\pi}{k+3} \right)}}
			= \Landauo\left(n^{-1/4} \right)
			.
	\end{align*}
\end{coro}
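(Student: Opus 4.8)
The plan is to obtain the stated equivalence for the ratio directly by dividing the two asymptotic formulas from Theorems~\ref{theo:relaxededmain} and \ref{theo:compactedmain}, and then to establish the $\Landauo(n^{-1/4})$ bound through a sign analysis of the surviving exponent of $n$.

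First I would form the quotient $c_{k,n}/r_{k,n}$ by substituting the two equivalences. The factors $n!$ and $\left(4\cos\left(\frac{\pi}{k+3}\right)\right)^{n}$ are identical in both expressions and cancel, and the common polynomial factor $n^{-k/2}$ cancels as well (here one relies on $\gamma_k \neq 0$, which is implicit in the equivalence of Theorem~\ref{theo:relaxededmain}). What remains is precisely
\begin{align*}
\frac{c_{k,n}}{r_{k,n}} \sim \frac{\kappa_k}{\gamma_k}\, n^{-\frac{1}{k+3} - \left(\frac{1}{4} - \frac{1}{k+3}\right)\frac{1}{\cos^2\left(\frac{\pi}{k+3}\right)}},
\end{align*}
which is the first claimed relation; this step is pure bookkeeping.

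The substantive part is the estimate $\Landauo(n^{-1/4})$. Denoting the exponent of $n$ by $E(k)$ and writing $\varphi = \frac{\pi}{k+3}$, I would prove $E(k) < -\tfrac14$ by computing $E(k)+\tfrac14$ and factoring it. Since the two occurrences of $\frac14-\frac{1}{k+3}$ share a common factor, a short rearrangement yields the identity
\begin{align*}
E(k) + \frac14 = \left(\frac14 - \frac{1}{k+3}\right)\left(1 - \frac{1}{\cos^2\varphi}\right).
\end{align*}
For $k \ge 0$ we have $0 < \varphi \le \frac{\pi}{3} < \frac{\pi}{2}$, so $0 < \cos\varphi < 1$ and hence $1 - \frac{1}{\cos^2\varphi} < 0$; the first factor $\frac14 - \frac{1}{k+3}$ is strictly positive exactly when $k \ge 2$. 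Thus $E(k)+\tfrac14 < 0$ for every $k \ge 2$, and therefore $n^{E(k)} = n^{-1/4}\cdot n^{E(k)+1/4} = \Landauo(n^{-1/4})$, the second factor tending to $0$.

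The only genuine content is this last sign computation, and the one point requiring care is the dependence on $k$: the strict bound $\Landauo(n^{-1/4})$ is a phenomenon of $k \ge 2$, since at $k=1$ the first factor vanishes and $E(1) = -\tfrac14$ exactly, while $E(0)=0$, giving ratios of order $\Theta(n^{-1/4})$ and $\Theta(1)$ there. Observing further that the factorization forces $E(k) \to -\tfrac14$ from below as $k \to \infty$ explains why $-\tfrac14$ is the natural threshold appearing in the statement, and confirms that the exponent never actually reaches it for finite $k \ge 2$.
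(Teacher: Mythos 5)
Your proposal is correct and follows exactly the route the paper intends: the corollary is stated as a direct combination of Theorems~\ref{theo:relaxededmain} and \ref{theo:compactedmain}, and your cancellation of the common factors plus the factorization $E(k)+\tfrac14=\bigl(\tfrac14-\tfrac{1}{k+3}\bigr)\bigl(1-\tfrac{1}{\cos^2\varphi}\bigr)$ is the right way to justify the $\Landauo(n^{-1/4})$ claim, which the paper leaves implicit. Your observation that the strict little-$\Landauo$ bound only holds for $k\ge 2$ (with $E(1)=-\tfrac14$ exactly and $E(0)=0$) is a valid and worthwhile refinement of the statement as printed.
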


Thus, the number of compacted trees among relaxed trees for large $n$ is negligible. This result quantifies the restriction of uniqueness of subtrees in compacted trees.

\section{On the structure of compacted trees}
\label{sec:struct}

In this section we will discuss some basic observations concerning the structure of compacted trees. 
First note that pointers may point to nodes lying outside the subtree of the pointer's
start node (compare with Figures~\ref{fig:UIDex} and \ref{fig:compacted_trees_n123}). Such subtrees of compacted trees
cannot be compacted trees themselves. For this reason, we define the concept of \emph{c-subtrees}.

\begin{definition}
	 A \emph{c-subtree} is the subgraph of a compacted tree induced by a node and all its
descendants. 	 	 
	 A \emph{cherry} is a c-subtree where both children of the root are pointers. 
\end{definition}

A cherry is, in a sense, the ``minimal'' construction to create a new (unique) subtree.
It consists of a node and two pointers, which point to already found $c$-subtrees during the
traversal process. An example is given in Figure~\ref{fig:compacted_trees_n123}: In the rightmost tree, the $c$-subtree
with the root node labeled by $2$ is a cherry. Such a cherry is not a compacted tree in the
sense of Definition~\ref{def:compactedbinarytree}, as the root node has two pointers which point
to an external structure. It represents, however, a subtree and it corresponds in a unique way 
to the compacted tree of this subtree. 
The only compacted tree of size~$1$ is also given in the same figure.

With this terminology we are able to analyze some aspects of the DAG-structure of compacted trees.  
First, we look at the spine. 

\begin{lemma}
	\label{lem:deletepointers}
	The spine of a compacted tree of size $n$ is a binary tree of size $n$. 
\end{lemma}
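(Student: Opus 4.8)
The plan is to realize the spine as the subtree of \emph{first occurrences} inside the original full binary tree and then to check that this subtree is itself binary. Let $C$ be a compacted tree of size $n$, obtained by running the \texttt{UID} procedure on a full binary tree $T$. By construction the internal nodes of $C$ are in bijection with the distinct nonempty fringe subtrees of $T$, while the unique leaf corresponds to \texttt{nil}. For each such subtree $S$ I would mark its first occurrence, namely the first node of $T$ whose fringe subtree equals $S$ in the post-order traversal used by \texttt{UID}. Writing $F$ for the set of these marked nodes, we have $|F|=n$.

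First I would verify that the spine is exactly the subgraph of $T$ induced by $F$. By Proposition~\ref{prop:pointerrestriction} the procedure replaces an edge to a child by a pointer precisely when that child's subtree has already been seen, i.e.\ when the child is not a first occurrence; the surviving tree edges are therefore those joining a first occurrence to a first-occurrence child. Hence deleting all pointers and the leaf from $C$ leaves precisely the nodes of $F$ together with the edges of $T$ connecting two members of $F$, each carrying the left/right orientation inherited from $T$.

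The main step is to show that $F$ is closed under taking parents, so that the induced structure is connected and rooted at the root of $T$. Suppose $v\in F$ is the first occurrence of $S$ and let $p$ be its parent, carrying subtree $P$. If $p$ were not a first occurrence, then $P$ would have appeared earlier at some node $p'$; since a finite subtree cannot coincide with any of its proper subtrees, $p'$ and $p$ are incomparable, so in post-order the whole subtree rooted at $p'$ — which contains a copy of $S$ in the same relative position — is traversed before $v$, producing an occurrence of $S$ earlier than $v$ and contradicting the choice of $v$. Thus $p\in F$, and inductively every ancestor of a first occurrence is a first occurrence. Consequently $F$ with its inherited edges is a connected, acyclic, plane subgraph of $T$; each of its nodes retains a (possibly empty) ordered subset of its two children, so out-degrees are at most two, and $|F|=n$ gives a binary tree of size $n$, which is exactly the spine.

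I expect the delicate point to be the combination of the second and third steps: matching the non-pointer edges of the DAG with the edges of the induced first-occurrence subtree, and arguing from the post-order ordering that the parent of a first occurrence is again a first occurrence. Both hinge on the elementary but essential fact that in a finite tree a subtree is strictly larger than each of its proper subtrees, which rules out the problematic ancestor--descendant configurations; once this is in place the remaining claims (connectivity, acyclicity, bounded out-degree) follow immediately.
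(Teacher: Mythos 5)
Your proof is correct, and it takes a more explicit route than the paper's. The paper argues entirely inside the DAG: deleting pointers and the leaf clearly leaves a rooted acyclic graph, and connectedness is shown by contradiction — if some pointer were the only link between two parts, the subtree it points to would be a repeated occurrence, hence already present and joined to the root by internal edges. Your argument instead works in the source tree $T$: you identify the spine with the set $F$ of first occurrences of the distinct fringe subtrees under the post-order traversal of \texttt{UID}, and the heart of the proof is the closure of $F$ under taking parents, which you derive from the fact that a fringe subtree cannot equal a proper fringe subtree of itself (so a purported earlier occurrence of the parent's subtree sits at an incomparable node whose entire subtree — including a copy of $S$ — is traversed first). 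This is essentially a rigorous unpacking of the step the paper leaves implicit (``this subtree \ldots is therefore connected with the root via internal edges''): the paper's cut-edge contradiction is shorter, while your first-occurrence description additionally pins down exactly which nodes and oriented edges of $T$ survive, giving the out-degree and plane-structure claims, and the count $|F|=n$, essentially for free. Both are valid; yours is the more self-contained of the two.
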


\begin{proof}
	Obviously, by deleting the leaf and the pointers we get a rooted, acyclic graph.
	It remains to show that this graph is connected. 
	Assume that there exists a pointer which is the only connection between two parts of the compacted tree. 
	By the \texttt{UID} procedure a pointer corresponds to a multiple occurrence of a subtree.
	Therefore we get a contradiction, as this subtree must already exist in the tree and is,
	therefore, connected with the root via internal edges. 
\end{proof}

Let us remark that the tree structure of a spine is binary in the sense that its nodes are either of out-degree $2$,
 $1$ (with two possibilities, either with a left child or with a right child), or $0$.

\begin{prop}
	\label{prop:characterizingcompactedtrees}
	From any binary tree of size $n$, we can build a compacted tree of size $n$, with the following operations:
	\begin{enumerate}
		\item Add a leaf as left child of the leftmost node of the binary tree.
		\item Add pointers to every node such that every node except the leaf has out-degree $2$.
		\item\label{item:cherryuniqueness} Let the pointers point to internal nodes which are in post-order traversal
			before the root node (under consideration) such that the corresponding subtree is unique (not already existing).
	\end{enumerate}
	Every compacted tree of size $n$ can be constructed this way. 
\end{prop}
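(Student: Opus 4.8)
The plan is to establish Proposition~\ref{prop:characterizingcompactedtrees} as a bijective correspondence, proving both that the construction always yields a valid compacted tree and that every compacted tree arises uniquely in this way. The statement has two directions: (i) soundness, that applying operations 1--3 to any binary tree of size $n$ produces a legitimate compacted tree of size $n$; and (ii) completeness, that every compacted tree can be obtained by this recipe. I would treat these separately, leaning on Lemma~\ref{lem:deletepointers} and Proposition~\ref{prop:pointerrestriction} as the two key prior results.

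\medskip
\noindent\textbf{Soundness.} First I would check that the object built by operations 1--3 satisfies Definition~\ref{def:compactedbinarytree}. Step~1 turns the binary tree (whose nodes have out-degree $0$, $1$, or $2$, as remarked after Lemma~\ref{lem:deletepointers}) into a structure with a single distinguished leaf located at the leftmost position, which is exactly where the post-order traversal first terminates. Step~2 then saturates every node to out-degree $2$ by attaching pointers in the free child slots, so the out-degree condition from Section~\ref{sec:creating} is met, with the leaf the unique sink. The crucial constraint is Step~3: pointers must target nodes appearing \emph{earlier} in post-order, which is precisely the content of Proposition~\ref{prop:pointerrestriction}, and the uniqueness clause in item~\ref{item:cherryuniqueness} enforces that each c-subtree is distinct, matching the behavior of the \texttt{UID} procedure. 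One must verify that Step~3 is always \emph{realizable}, i.e.\ that for every node there exists at least one admissible earlier target making its subtree unique; I would argue this by processing nodes in post-order and observing that the set of available earlier nodes grows, so a valid (though not unique) choice always exists.

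\medskip
\noindent\textbf{Completeness.} For the converse, I would start from an arbitrary compacted tree $C$ of size $n$ and recover the binary tree it came from. By Lemma~\ref{lem:deletepointers}, deleting the leaf and all pointers from $C$ yields its spine, a binary tree of size $n$. The task is to show that feeding this spine back through operations 1--3 reproduces $C$. The leaf deleted in forming the spine is the leftmost leaf (this is where the post-order traversal begins and where the unique retained leaf sits), so Step~1 restores it in the correct position. The pointers removed are reinstated by Steps~2 and~3; since the original $C$ was produced by the \texttt{UID} procedure, its pointers already respect the earlier-in-post-order and uniqueness conditions, so they constitute one admissible assignment under item~\ref{item:cherryuniqueness}, and we recover exactly $C$.

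\medskip
\noindent\textbf{The main obstacle} will be pinning down the realizability and well-definedness of Step~3 rather than the routine structural bookkeeping. Because the uniqueness requirement couples the choice of pointer at one node to the choices already made at all earlier nodes, the construction is genuinely sequential and not a free product of independent choices; I would therefore make the post-order processing explicit and show, by induction on the traversal, that an admissible pointer assignment always exists and that the resulting c-subtrees are pairwise distinct. The subtle point is ensuring that adding a new node with two earlier-pointing pointers never \emph{forces} a collision with a previously built subtree when the pool of earlier nodes is small (for instance near the start of the traversal); handling these boundary cases — and confirming the leaf and the leftmost-node placement interact correctly with the level and level-$0$ structure — is where the argument needs the most care.
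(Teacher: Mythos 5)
Your proposal is correct and follows essentially the same route as the paper: existence via a post-order construction of the pointers, and completeness via the spine together with Lemma~\ref{lem:deletepointers}. The one point you flag as the main obstacle---realizability of Step~3---is settled in the paper by a one-line concrete device, namely always pointing to the last node visited in post-order, which automatically makes each new subtree distinct from all earlier ones (alternatively, a cherry facing a pool of size $p$ has $(p+1)^2$ configurations of which only $p$ are forbidden, so an admissible choice always exists).
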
 

\begin{proof}
	A simple way to build a compacted tree by using the spine is the following one.
	Add the leaf to the leftmost node of the binary tree. Then traverse the binary tree
	by using the post-order traversal. Each time one meets a node with out-degree less than $2$
	one adds $1$ or $2$ pointers such that the uniqueness condition is not violated (e.g.~to the last node that has been visited).
	Thus, starting from a DAG generated from the above operations, decompacting and compacting it again with the \texttt{UID} procedure one arrives at the same structure.

	The last statement is obvious, since every compacted tree can be reconstructed from its spine
using only the operations listed above. By Lemma~\ref{lem:deletepointers} the spine has the same
size as the compacted tree. 
\end{proof}
	
The advantage of the previous proposition is that it gives us an alternative construction of compacted trees bypassing the \texttt{UID} procedure.
Starting from a binary tree, we can construct several compacted trees by enriching this binary tree.
So the function mapping compacted trees to its spine is not one-to-one.

A key observations is that cherries are the fundamental structures that guarantee the uniqueness 
of c-subtrees.
Indeed, if a cherry violates the condition implicit in the third operation listed in Proposition~\ref{prop:characterizingcompactedtrees},
 the structure is not a compacted tree according to our definition, but only 
a relaxed tree.

A different explanation why cherries are the crucial objects for uniqueness
comes from the property that the compaction procedure generates an increasing set of elements,
i.e.~already seen subtrees. 
Here we mean that the next element is constructed by a new internal node and previous,
already built elements. 
In particular, the first element is always a leaf, the second one is always
an internal node with two leaves as children (a ``classical cherry''). 
Then, as a third element one has an element with a new internal node and a cherry as its left
child, or as its right child, or on both sides. How will further elements be built 
such that the uniqueness property is maintained? Let us focus on the bad ways to do so,
\emph{i.e.}, we ask: What is forbidden? 
There are two cases according to the type of the current node (in the post-order traversal of the
tree):

\begin{itemize}
	\item The current node is a cherry: The only forbidden way to place the two pointers is 
choosing an already generated subtree and letting the two pointers of the cherry point to the
children of the subtree. Note that the children of an already generated subtree must have been
generated before. Thus, for any already generated subtree there is one forbidden configuration for
the placement of the pointers. 
	\item The current node is not a cherry: In this case at least one edge is not a pointer.
		But then it can easily be seen by induction (on the size of c-subtrees) that the subtree of the corresponding
child is unique when assigning pointers during a post-order traversal: If the pointer is the left
edge, the right subtree has not been processed yet when the decision for the pointer's target is 
about to come. 
Otherwise, the left subtree is unique and its building (placing of all its
pointers) ends just before the pointer being the right edge is processed. 
Hence, there is no restriction on placing the pointer since the current node will always generate an new subtree due to the unique first appearance of the subtree of one of its children. 
\end{itemize}

This idea will be picked up in the next section and used to derive a recurrence relation for the
number of compacted trees of size $n$. Besides, it shows that we have to be careful only when
dealing with nodes having two pointers (see Section~\ref{sec:compacted}). 

\section{Compacted trees of unbounded right height}
\label{sec:recurrence}

Using the properties stated in the last section for compacted trees, we are now
able to exhibit a combinatorial recurrence based on a decomposition of
the structures under consideration.

\subsection{A recurrence relation for compacted trees}

Let $\ct_n$ be the number of compacted binary trees of size $n$.
Recall that Figure~\ref{fig:compacted_trees_n123} showed all compacted trees
of size $0,1$ and $2$. The first few terms of the sequence are given by
\begin{align*}
	\left( \ct_n \right)_{n \geq 0} &= 
		\left( 
			1, 1, 3, 15, 111, 1119, 14487, 230943, 4395855, 97608831, 
		\ldots \right).	
\end{align*}

This sequence is found as sequence \OEIS{A254789} in Sloane's Online Encyclopedia of Integer Sequences\footnote{\href{http://www.oeis.org}{www.oeis.org}}.
Let us mention that it appeared independently online during our work on this problem. 
In this section we solve the counting problem by deriving the first defining recurrence relation.

Suppose that we perform a post-order traversal on a tree and that already $p$ c-subtrees have been
discovered. Then the current node is the root of another c-subtree. 
Let $\Gamma_{n,p}$ denote the class of all c-subtrees
of size $n$ that may show up as such a c-subtree. Then we may think of the already
compacted subtrees as an external pool of trees where our pointers can point to additionally when
continuing our traversal. For an illustration see Figure~\ref{fig:pool}.
Note that the leaf is always part of this pool but not counted,
and all subtrees in the pool must be constructed out of elements from the pool.
In this sense the pool is closed in itself, and its evolution in the compaction procedure is an increasing sequence of sets.

\begin{figure}[htb]
	\centering
	\includegraphics[width=0.32\textwidth]{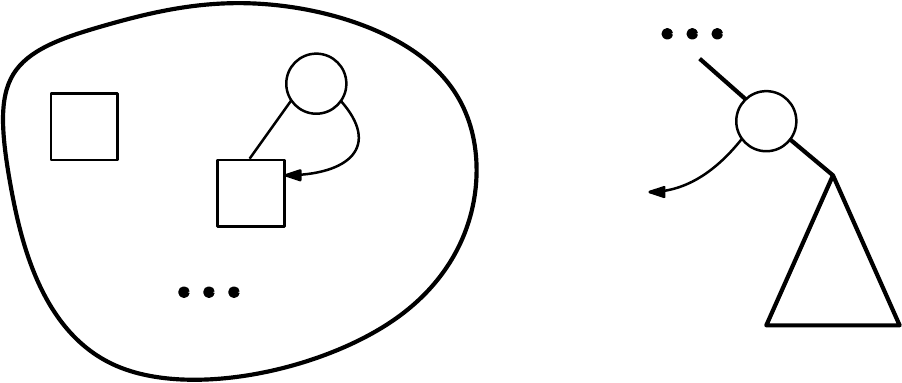} \qquad \qquad \qquad \qquad
	\includegraphics[width=0.3\textwidth]{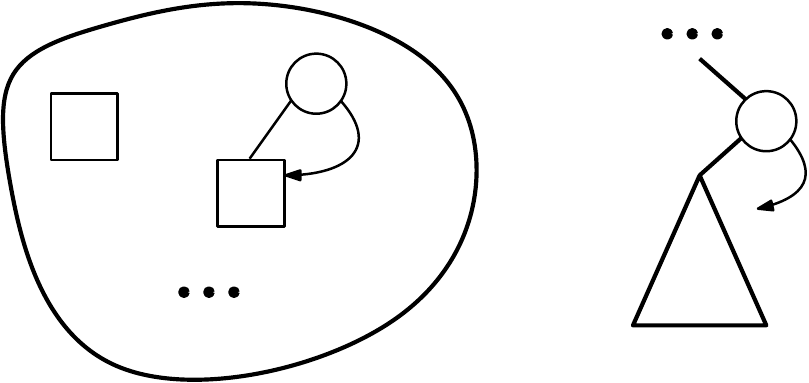}
	\caption{\small The two cases of the pool construction of Theorem~\ref{theo:comprecursion}. The pool (circled elements) represents the already visited c-subtrees the pointers may point to. In the second case it may also point to the c-subtrees of the left sibling.}
	\label{fig:pool}
\end{figure}

We define the size of the pool to be the number of distinct subtrees 
with at least one internal node. 
Thus, the pool for the trees in $\Gamma_{n,p}$ has size $p$ and consists of $p+1$ distinct 
c-subtrees. This artificially looking convention will simplify the later analysis.

\begin{theo}
	\label{theo:comprecursion}
	Let $n,p \in \N$, and $\Gamma_{n,p}$ as above. Moreover, we denote the cardinality of
$\Gamma_{n,p}$ by $\gamma_{n,p}$. Then 
	\begin{align}
		\gamma_{n+1,p} &= \sum_{i=0}^{n} \gamma_{i,p} \; \gamma_{n-i,p+i}, \qquad \text{ for } n \geq 1, \label{eq:recCnp}\\
		\gamma_{0,p} &= p+1, \label{eq:recCnpI1}\\
		\gamma_{1,p} &= p^2 + p + 1. \label{eq:recCnpI2}
	\end{align}
\end{theo}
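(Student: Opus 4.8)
The plan is to establish the recurrence by a careful combinatorial decomposition of a c-subtree according to its root, keeping precise track of how the pool of already-discovered subtrees grows as we traverse the left subtree before the right one. The central bookkeeping device is the post-order traversal: when we enter the root of a c-subtree of size $n+1$ with a pool of size $p$, the traversal first processes the entire left subtree, then the entire right subtree, and finally the root node itself. I would first split the left subtree into its possible sizes $i$ ranging from $0$ to $n$, so that the right subtree has size $n-i$ (the root accounts for the $+1$). The left subtree is itself a c-subtree drawn from the pool of size $p$, hence there are $\gamma_{i,p}$ choices for it.

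\medskip
\noindent\textbf{Key step: the shift in the pool parameter.}
The crucial observation is what happens to the pool while we traverse the left subtree. By Proposition~\ref{prop:pointerrestriction}, all pointers may only point to previously discovered subtrees, and by the discussion preceding this theorem the pool evolves as a strictly increasing sequence of sets: each of the $i$ internal nodes of the left subtree, when visited in post-order, creates exactly one new distinct c-subtree that is added to the pool. Therefore, by the time the traversal has finished the left subtree and begins the right subtree, the pool has grown from size $p$ to size $p+i$. This is precisely why the right factor is $\gamma_{n-i,p+i}$: the right subtree is a c-subtree of size $n-i$ built against the enlarged pool. Summing over all admissible splits $i$ gives \eqref{eq:recCnp}. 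I would justify the ``exactly one new subtree per internal node'' claim by invoking the uniqueness condition built into the compaction procedure (operation~\ref{item:cherryuniqueness} of Proposition~\ref{prop:characterizingcompactedtrees}), together with Figure~\ref{fig:pool} illustrating that the right subtree's pointers may additionally target the left sibling's freshly created c-subtrees.

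\medskip
\noindent\textbf{Base cases.}
For \eqref{eq:recCnpI1}, a c-subtree of size $0$ has no internal node, so it must be a single pointer into the pool; since the pool of size $p$ consists of $p+1$ distinct c-subtrees (by the size convention that counts the leaf separately), there are exactly $p+1$ such pointers, giving $\gamma_{0,p}=p+1$. For \eqref{eq:recCnpI2}, a c-subtree of size $1$ is a single internal node with two outgoing edges, each of which is a pointer into a pool of $p+1$ elements, so naively there are $(p+1)^2$ configurations; however the uniqueness restriction forbids exactly those configurations in which the two pointers reproduce an already existing subtree, and one checks that there are precisely $p$ forbidden placements (one for each already-generated subtree having internal nodes, as in the ``current node is a cherry'' analysis preceding the theorem), yielding $(p+1)^2 - p = p^2+p+1$.

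\medskip
\noindent\textbf{Main obstacle.}
The delicate point, and the step I expect to require the most care, is verifying that the recurrence \eqref{eq:recCnp} does not overcount or undercount: one must confirm that the left and right subtrees can be chosen \emph{independently} once the sizes $i$ and $n-i$ and the pool sizes $p$ and $p+i$ are fixed. The subtlety is that the set of subtrees created while traversing the left subtree depends on the left subtree's concrete shape, not merely on its size $i$; nevertheless, the \emph{number} of available targets for the right subtree's pointers depends only on the cardinality $p+i$ of the enlarged pool, and the uniqueness constraints on the right subtree are themselves already encoded in the definition of $\gamma_{n-i,p+i}$. Making this independence rigorous amounts to showing that $\gamma_{m,q}$ genuinely depends only on $m$ and $q$ and not on the internal identity of the $q$ pool elements, which follows because the combinatorial constraints (pointers into a pool, uniqueness of newly created subtrees) are invariant under relabeling of the pool. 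Once this invariance is secured, the product $\gamma_{i,p}\,\gamma_{n-i,p+i}$ correctly counts the pairs, and the sum over $i$ completes the proof.
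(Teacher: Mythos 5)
Your proposal is correct and follows essentially the same route as the paper: decompose at the root into a left subtree of size $i$ counted against the pool of size $p$ and a right subtree of size $n-i$ counted against the enlarged pool of size $p+i$, with the same treatment of the base cases $\gamma_{0,p}=p+1$ and $\gamma_{1,p}=(p+1)^2-p$. Your additional discussion of why the left and right factors can be chosen independently (invariance of $\gamma_{m,q}$ under relabeling of the pool) is a point the paper leaves implicit, but it does not change the argument.
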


\begin{proof}
An element of $\Gamma_{n,p}$ consists of $n$ internal nodes connected by $n-1$ internal edges. The remaining $n+1$ edges of the compacted binary tree are pointers (the possible edge to the leaf may be interpreted as a pointer). These must be chosen in such a way that no subtree is generated twice. Additionally, they may point either to a c-subtree of the pool or to a c-subtree of its left sibling, see Figure~\ref{fig:pool}. The second condition is due to the post-order traversal of the tree by the \texttt{UID} procedure.  
	
	Now we can give a recursive decomposition of such trees. Let $t$ be a c-subtree with $n+1$
nodes and a pool of size $p$. The root of $t$ has a left and a right subtree attached to $i$ and
$n-i$, (for $i=0,\ldots,n$) internal nodes, respectively. Note that every internal node also
represents a c-subtree. For the left child the pool remains the same as for its parent. However,
for the right child the pointers may additionally point to c-subtrees of its left sibling. Hence,
the pool is increased by the size of its left sibling. These considerations directly give Equation~\eqref{eq:recCnp}.
	
	Next, let us consider the initial conditions \eqref{eq:recCnpI1} and \eqref{eq:recCnpI2}. The c-subtrees with no internal nodes can be interpreted as pointers. These may point to any element of the pool, hence $\gamma_{0,p} = p+1$. 
	
	The c-subtrees with $1$ internal node are cherries whose both children are not internal nodes. Hence, they consist either of two pointers or of a leaf and a pointer. As the pool always contains a leaf, it is sufficient to consider the first case. Then these two pointers have $p+1$ possibilities each to point at. Among these $(p+1)^2$ cases are $p$ which must be excluded as they are the ones already found in the pool. Note that these can be recreated by letting the pointers point to the same children as the ones found in the pool. Hence, we get 
	\begin{align*}
		\gamma_{1,p} &= (p+1)^2-p = p^2+p+1. \qedhere
	\end{align*}
\end{proof}

\begin{coro}
The number $\ct_n$ of compacted trees of size $n$ is equal to $\gamma_{n,0}$.
\end{coro}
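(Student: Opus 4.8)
The plan is to show that the compacted trees of size $n$ are precisely the elements of $\Gamma_{n,0}$, from which $\ct_n = \gamma_{n,0}$ follows immediately. The key observation is that the pool parameter $p$ records the c-subtrees that have already been generated \emph{before} the root of the current c-subtree is processed. Setting $p=0$ therefore means that no c-subtree with an internal node has been built yet, so the single element of the pool is the leaf (recall the convention that a pool of size $p$ consists of $p+1$ distinct c-subtrees). I would argue that a c-subtree built under this empty-pool regime is exactly a standalone DAG produced by the \texttt{UID} procedure, that is, a compacted tree.

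First I would establish the forward inclusion. Take a compacted tree $T$ of size $n$ and run the post-order traversal of the \texttt{UID} procedure on it starting from an empty table. By Proposition~\ref{prop:pointerrestriction} every pointer of $T$ points to a subtree discovered earlier in the traversal, and since $T$ is self-contained this earlier subtree lies within $T$ itself (or is the leaf). Hence, when we regard the root of $T$ as the current node, the only previously available structure is the leaf; this is exactly the situation $p=0$, so $T \in \Gamma_{n,0}$.

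For the reverse inclusion, let $t \in \Gamma_{n,0}$. By definition its pointers may point only to the leaf or to c-subtrees generated during its own traversal, and this generation respects both the post-order ordering and the uniqueness condition (operation~\ref{item:cherryuniqueness} in Proposition~\ref{prop:characterizingcompactedtrees}). These are exactly the constraints characterizing the output of the \texttt{UID} procedure, so $t$ is a compacted tree of size $n$. The two inclusions yield a bijection between compacted trees of size $n$ and $\Gamma_{n,0}$, whence $\ct_n = \gamma_{n,0}$. The only point requiring care — and the main, though modest, obstacle — is the bookkeeping that identifies the empty-pool regime with self-containedness, i.e.\ verifying that no pointer of an element of $\Gamma_{n,0}$ can escape to an external structure; this is guaranteed precisely because the pool contains nothing but the leaf.
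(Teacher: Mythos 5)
Your proposal is correct: the corollary is purely definitional, and your argument---that $p=0$ means the pool contains only the leaf, so every pointer of an element of $\Gamma_{n,0}$ must stay inside the structure itself, which is exactly the compacted-tree condition---is precisely the identification the paper relies on (the paper states the corollary without proof as an immediate consequence of the definition of $\Gamma_{n,p}$). Your two inclusions simply make explicit what the paper treats as obvious, so the approach is essentially the same.
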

Obviously, by Theorem~\ref{theo:comprecursion} the numbers $\gamma_{n,0}$ depend on the numbers $\gamma_{k,p}$
for all $0 \leq k \leq n$ and all $0 \leq p \leq n$. Thus their computation is cubic in time and quadratic memory.

Next, let us state a simplified problem,
which also proves very difficult to solve, but is not as technical.

\subsection{A recurrence relation for relaxed compacted trees}
\label{sec:relaxedproblemdef}

Let $\rt_n$ be the number of relaxed trees of size $n$. The first few terms of the sequence are given by
\begin{align*}
	\left( \rt_n \right)_{n \geq 0} &= 
		\left( 
			1, 1, 3, 16, 127, 1363, 18628, 311250, 6173791, 142190703, 
		\ldots \right).
\end{align*}
This sequence is given by the sequence \OEIS{A082161} in the OEIS. 
The latter counts the number of deterministic completely defined initially connected
acyclic automata with $2$ inputs and $n$ transient unlabeled states
and a unique absorbing state, see~\cite{liskovets2006exact}.
The bijection of these structures to our (enriched) trees is obvious,
by traversing relaxed trees from the root to the leaf. 
We remark that the asymptotic behavior of the number of such structures seems not to be known.

Let $\delta_{	n,p}$ be the number of relaxed c-subtrees of size $n$ and a pool of size $p$.
We directly get a recurrence relation for these numbers, that is directly linked
to the one for $(\gamma_{n,p})_{n,p \in \N}$:
\begin{coro}
	\label{coro:relaxedrecurrence}
	Let $n,p \in \N$, then
	\begin{align}
		\delta_{n+1,p} &= \sum_{i=0}^{n} \delta_{i,p} \; \delta_{n-i,p+i}, \qquad \text{ for } n \geq 1, \label{eq:recRnp}\\
		\delta_{0,p} &= p+1. \label{eq:recRnpI1}
	\end{align}
	The number of relaxed trees of size $n$ is equal to $\delta_{n,0}$.
\end{coro}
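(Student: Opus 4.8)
The plan is to obtain this as an immediate corollary of Theorem~\ref{theo:comprecursion}, reusing the identical recursive decomposition but discarding the uniqueness bookkeeping. First I would observe that a relaxed c-subtree with $n+1$ internal nodes and a pool of size $p$ splits at its root into a left subtree with $i$ internal nodes and a right subtree with $n-i$ internal nodes, for some $0 \le i \le n$. Exactly as in the compacted case, the left child inherits the pool of size $p$, while the right child may additionally point into the c-subtrees generated by its left sibling; by the post-order convention of the \texttt{UID} procedure this enlarges its pool from $p$ to $p+i$. Since the left and right parts are now built completely independently, summing over $i$ yields \eqref{eq:recRnp}. The only ingredient absent compared with Theorem~\ref{theo:comprecursion} is the uniqueness restriction on the new subtree, so here no configuration has to be subtracted.

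For the initial condition I would argue verbatim as for $\gamma_{0,p}$: a relaxed c-subtree with no internal node is a single pointer, which by Proposition~\ref{prop:pointerrestriction} may target any of the $p+1$ elements of the pool (the $p$ nontrivial c-subtrees together with the leaf). This gives $\delta_{0,p} = p+1$, that is, \eqref{eq:recRnpI1}.

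The point worth emphasising is why a single initial condition suffices here, whereas the compacted recurrence required both $\gamma_{0,p}$ and $\gamma_{1,p}$. As explained in Section~\ref{sec:struct}, the uniqueness condition can be violated only at a cherry, i.e.\ a node both of whose children are pointers; for such a node the compacted count excludes the $p$ placements that reproduce an already existing subtree, forcing the correction $\gamma_{1,p} = (p+1)^2 - p$. In the relaxed setting this exclusion disappears, so a cherry over a pool of size $p$ admits all $(p+1)^2$ placements of its two pointers, giving $\delta_{1,p} = (p+1)^2$. This is precisely the value produced by the sum in \eqref{eq:recRnp} evaluated at $n=0$, namely $\delta_{0,p}\,\delta_{0,p}$, so the relaxed recurrence is in fact self-starting already at $n=0$ and no separate value for $\delta_{1,p}$ need be imposed. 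I expect this matching — checking that removing the cherry correction makes the convolution formula consistent at the base level — to be the only subtle step; everything else is inherited word for word from the proof of Theorem~\ref{theo:comprecursion}. Finally, specialising to an initially empty pool ($p=0$, consisting of the leaf alone) counts relaxed trees of size $n$, so their number equals $\delta_{n,0}$.
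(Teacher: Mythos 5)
Your proof is correct and follows the same route as the paper, which simply notes that the corollary is a direct consequence of Theorem~\ref{theo:comprecursion} once the uniqueness restriction enforced by~\eqref{eq:recCnpI2} is dropped. Your extra observation that $\delta_{1,p}=(p+1)^2=\delta_{0,p}\,\delta_{0,p}$, so that the convolution is self-starting at $n=0$ and no separate value for $\delta_{1,p}$ need be imposed, is a worthwhile clarification (and in fact justifies why the statement can omit an analogue of~\eqref{eq:recCnpI2}), but it does not constitute a different approach.
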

\begin{proof}
	This is a direct consequence of Theorem~\ref{theo:comprecursion} and the fact that we dropped the uniqueness restriction enforced by \eqref{eq:recCnpI2}.
\end{proof}

Note that the nature of the recurrence relation did not change compared to the one of the compacted case.
Unfortunately, we were not able to find an explicit solution, or to continue from here. 
However, using our main results on trees of bounded right height we are able to determine the asymptotic growth of this sequence.

\subsection{The asymptotic growth of unbounded compacted and relaxed trees}

In order to better understand the asymptotic growth of compacted trees we first consider some simple bounds.

\begin{lemma}
\label{lem:compgrowth}
The number of compacted trees of size $n$ satisfies the following bounds:
\[
n! \leq \ct_n \leq \rt_n \leq \frac{1}{n+1} \binom{2n}{n} \; n!.
\]
\label{lem:upper_bound}
\end{lemma}

\begin{proof}
Let us first consider the lower bound: 
Consider the subclass of chains. These are trees where the left child is always an internal edge and the right child is a pointer, see Figure~\ref{fig:R0}. Let $a_n$ be the number of chains with $n$ internal nodes. The leaf is the only such object of size~$0$. Hence, we have $a_0=1$. A chain of size $n+1$ can be constructed from a chain of size~$n$ by appending a new root node with a pointer. The pointer has $n+1$ possible locations to point to. This implies, $a_{n+1} = (n+1) a_n$. We get the lower bound $a_n = n!$. 

\begin{figure}[htb]
	\centering
	\includegraphics[width=0.6\textwidth]{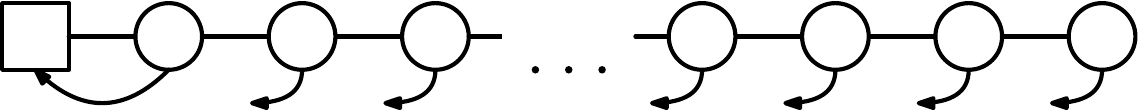}
	\caption{\small The number of compacted trees of size $n$ of right height at most $0$ is equal to $n!$.}
	\label{fig:R0}
\end{figure}

Let us now focus briefly on the upper bound: Consider all possible spines. There are $\Cat_n = \frac{1}{n+1}\binom{2n}{n}$ (Catalan numbers) such structures, as they are binary trees. 
Next, note that a binary tree of size $k$ has $k+1$ leaves. In our case these are pointers. By Proposition~\ref{prop:pointerrestriction} pointers can only point to previously discovered trees. Hence, every pointer has at most $k$ possibilities to point at. This proves the upper bound.
\end{proof}

The last result implies that the asymptotic growth of compacted trees satisfies
$
	\ct_n = \LandauO(n! \, 4^n \, n^{-3/2}),
$
but it is also bounded from below by $n!$.
This observation has two important implications. 
Firstly, an ordinary generating function for $(\ct_n)_{n\in\mathbb N}$ would have radius of
convergence equal to zero.
Hence, we will need to use exponential generating functions in order to ensure a non-zero radius of
convergence.
This idea will be used in the next sections. 
Secondly, combining it with our main result Theorem~\ref{theo:compactedmain} we directly get:

\begin{coro}
	\label{coro:mainexpgrowth}
	The exponential growth of compacted and relaxed binary trees of size $n$ is equal to $4^n$, i.e.
	\begin{align*}
		\lim_{n \to \infty} \frac{\log(c_n/n!)}{n} &= 4, & \text{ and } && 
		\lim_{n \to \infty} \frac{\log(r_n/n!)}{n} &= 4.
	\end{align*}
\end{coro}

\begin{proof}
	Observe that for any $k$ it holds that $c_{k,n} \leq c_n \leq r_n \leq n! \Cat_n$. Thus, the asymptotic form of the Catalan numbers and Theorem~\ref{theo:compactedmain} show the claim.
\end{proof}

In the next section we return to compacted binary trees of bounded right height and start to capture their nature with exponential generating functions.

\section{Operations on trees}
\label{sec:operations}

We have seen in the previous sections that the numbers $\cgf_n$ and $\rgf_n$ are growing at least like $n!4^n$.
Therefore we introduce exponential generating functions in order to get a non-zero radius of convergence.
But then there arises a problem in the construction: exponential generating functions are designed
for labeled objects, but we are dealing with unlabeled ones. Thus, we first investigate how the nature
of exponential generating functions reflects the construction of such enriched trees. 

The use of non-standard generating functions in the enumeration of DAGs is not new.
Robinson~\cite{robinson1973dags} introduced the so-called ``special generating function''
\begin{align*}
	A(t) &= \sum_{n \geq 0} a_n 2^{-\binom{n}{2}} \frac{t^n}{n!}
\end{align*}
to derive nice expressions of such generating functions for labeled DAGs.
This \emph{ad hoc} generating function seems not applicable in our context,
but exponential generating functions are. 

For this purpose, we restrict ourselves to a subclass: relaxed trees of bounded right height, and
we are going to derive their exponential generating functions. 
In this context we introduce the following notations: Let $\Rc$ be a combinatorial class.
Its exponential generating function is given by $\Rgf(z) = \sum_{n \geq 0} \rgf_n \frac{z^n}{n!}$
where $\rgf_n$ denotes the number of elements in $\Rc$ of size $n$.

\begin{lemma}(Adding a new root)
	\label{lem:approot}
	Let $\Rc$ be a combinatorial subclass of relaxed trees, and let $\Sc$
	be the combinatorial class whose elements consist of a new root node, with an element of $\Rc$
	as its left child, and with a pointer as its right child. 
	Then,
	\begin{align*}
		S(z) = z\Rgf(z).
	\end{align*}
\end{lemma}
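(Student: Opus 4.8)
The plan is to reduce the identity to a single relation on counting sequences and then translate it into generating functions. Write $s_m$ for the number of size-$m$ elements of $\Sc$ and recall that $\rgf_n$ denotes the number of size-$n$ elements of $\Rc$. I will show that $s_0=0$ and $s_{n+1}=(n+1)\,\rgf_n$ for all $n\ge 0$, and then verify that this is precisely equivalent to $S(z)=z\Rgf(z)$.

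First I would set up the correspondence between the two classes. Every element of $\Sc$ is, by construction, a new root together with an element $R\in\Rc$ as left child and a pointer as right child; the new root is the only additional internal node, so the size increases by exactly one. In particular every element of $\Sc$ has size at least $1$, whence $s_0=0$, and deleting the root and the right pointer yields a well-defined forgetful map $\Sc\to\Rc$ (the pointers inside $R$ keep pointing to already-visited nodes, since the post-order traversal of $R$ is a prefix of that of the whole structure). The combinatorial content of the lemma is then simply to count the fibres of this map.

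Next I would compute the size of a fibre over a fixed $R\in\Rc$ of size $n$. The only freedom in reconstructing an element of $\Sc$ from $R$ lies in the target of the new right pointer. By Proposition~\ref{prop:pointerrestriction} this pointer may point only to nodes already visited when the traversal reaches it; since the entire left subtree $R$ is traversed before the right child of the root, the admissible targets are exactly the visited nodes of $R$, namely its $n$ internal nodes together with its single leaf. Thus each $R$ of size $n$ gives rise to exactly $n+1$ elements of $\Sc$, so $s_{n+1}=(n+1)\,\rgf_n$. Translating into generating functions, I write $S(z)=\sum_{m\ge0}s_m\frac{z^m}{m!}$ and $\Rgf(z)=\sum_{n\ge0}\rgf_n\frac{z^n}{n!}$, substitute the relation, and match the coefficient of $\frac{z^m}{m!}$ on both sides: the factor $z$ accounts for the extra root node, while the exponential normalization is exactly what turns the combinatorial factor $n+1$ into multiplication by $z$, giving $S(z)=z\Rgf(z)$.

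The main obstacle --- indeed the only subtle point in this short argument --- is the bookkeeping of admissible pointer targets: one must be certain that the count is $n+1$, not $n$ or $n+2$. This rests on two facts, both immediate from the definitions but essential for the factor to match: the leaf, being the first node visited in post-order, is itself an admissible target, and no node outside $R$ has yet been visited when the new right pointer is reached. These are precisely what make the extra factor equal the ``size-plus-one'' count absorbed by multiplication with $z$, rather than producing a derivative or an integral of $\Rgf$.
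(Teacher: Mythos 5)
Your proof is correct and follows essentially the same route as the paper: count the $n+1$ admissible pointer targets ($n$ internal nodes plus the leaf) for each size-$n$ element of $\Rc$, giving $s_{n+1}=(n+1)\rgf_n$, and observe that the exponential normalization turns this into multiplication by $z$. Your additional care about the fibres of the forgetful map and the post-order justification is a more explicit version of what the paper states in one line.
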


\begin{proof}
	Consider a relaxed tree of $\Rc$ of size $n$.
	Adding a new root node with the considered tree as its left child creates a tree of size $n+1$. 
	The new pointer has $n+1$ possibilities, in particular it may point to one
	of the $n$ internal nodes or the leaf. On the level of generating functions this implies
	\begin{align*}
		S(z) &= \sum_{n \geq 0} (n+1) \rgf_n \frac{z^{n+1}}{(n+1)!} = z \Rgf(z). \qedhere
	\end{align*}
\end{proof}
	
With the help of this lemma, we are able to construct the generating function
of relaxed trees of right height equal to $0$.
Let $\Rc_0$ be the respective combinatorial class,
and $\Rgf_0(z) = \sum_{n \geq 0} \rgf_{0,n} \frac{z^n}{n!}$ be the associated generating function. 

\begin{coro}
	\label{coro:R0}
	The generating function of relaxed trees of right height equal to $0$ is
	\begin{align*}
		\Rgf_0(z) &= \frac{1}{1-z}, \qquad 
		\text{ and } \qquad
		\rgf_{0,n} = n!.
	\end{align*}
\end{coro}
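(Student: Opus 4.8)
The plan is to identify the combinatorial class $\Rc_0$ explicitly and then apply Lemma~\ref{lem:approot} recursively. A relaxed tree of right height equal to $0$ has, by definition, no right edges in its spine. Since the spine is a binary tree (in the sense that each internal node has a left child, a right child, or both), forbidding all right edges forces every internal node to have only a left child. Hence the spine of such a tree is a left-leaning \emph{chain}: a path of $n$ internal nodes connected by left edges, together with the leaf attached at the bottom. This is precisely the class of chains described in the proof of Lemma~\ref{lem:compgrowth} and illustrated in Figure~\ref{fig:R0}.

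First I would observe that an element of $\Rc_0$ of size $n+1$ is obtained from an element of $\Rc_0$ of size $n$ by adding a new root node whose left child is the smaller chain and whose right child is a pointer. This is exactly the construction in Lemma~\ref{lem:approot} with $\Rc = \Rc_0$ and $\Sc = \Rc_0$ (restricted to positive size). Applying the lemma gives the functional relation
\begin{align*}
	\Rgf_0(z) - \rgf_{0,0} = z\,\Rgf_0(z),
\end{align*}
where the subtracted term accounts for the unique size-$0$ object (the leaf alone), which has no root to add. Since $\rgf_{0,0}=1$, solving for $\Rgf_0(z)$ yields $\Rgf_0(z) = 1/(1-z)$.

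Alternatively, and perhaps more transparently for extracting the coefficients, I would read off the recurrence directly: the pointer attached to the new root of a size-$(n+1)$ chain may point to any of the $n+1$ previously created objects (the $n$ internal nodes plus the leaf), so $\rgf_{0,n+1} = (n+1)\,\rgf_{0,n}$ with $\rgf_{0,0}=1$. This telescopes immediately to $\rgf_{0,n} = n!$, which matches the growth already established as the lower bound in Lemma~\ref{lem:compgrowth}. Extracting coefficients from the closed form as a confirmation, $[z^n]\,1/(1-z) = 1$, so $\rgf_{0,n} = n!\,[z^n]\,\Rgf_0(z) = n!$, in agreement.

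There is essentially no hard step here; the only point requiring care is the combinatorial identification that right height $0$ forces the spine to be a left chain, so that the class is genuinely closed under the single ``add-a-root-with-a-right-pointer'' operation of Lemma~\ref{lem:approot} and under no other. Once that structural observation is in place, the generating function follows from one application of the lemma, and the explicit coefficient formula follows either by geometric series expansion or by the elementary factorial recurrence.
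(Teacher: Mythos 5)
Your proof is correct and follows essentially the same route as the paper: the class is specified as a leaf plus the result of adding a new root (with right pointer) to an element of $\Rc_0$, which via Lemma~\ref{lem:approot} gives $\Rgf_0(z)=1+z\Rgf_0(z)$ and hence $\Rgf_0(z)=1/(1-z)$ with $\rgf_{0,n}=n!$. The extra observations you add (the chain structure of the spine and the direct recurrence $\rgf_{0,n+1}=(n+1)\rgf_{0,n}$) are consistent with the paper's Lemma~\ref{lem:compgrowth} but do not change the argument.
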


\begin{proof}
	Such a tree is either just a leaf of size $0$ or it is constructed
	from an element of $\Rc_0$ by appending a new root node. Obviously,
	this construction does not increase the right height, and it constructs
	all such trees. On the level of generating functions this directly translates into
	\begin{align*}
		\Rgf_0(z) &= 1 + z \Rgf_0(z).
	\end{align*}
	Solving the equation and extracting coefficients gives the result. 
\end{proof}

This gives an alternative proof of the lower bound in Lemma~\ref{lem:compgrowth}. It nicely exemplifies how exponential generating functions model operations on compacted trees.

We proceed now with other operations on combinatorial classes and generating functions. The next
two might seem ``strange'' at first glance, as they do not produce relaxed trees. However, they
are the basic operations for the construction of other ones.

\begin{lemma}[Adding/deleting the root while ignoring pointers]
	\label{lem:adddelnopointer}
	Let $\Rc$ be a class of relaxed trees. Let $\Ic$ be the class of objects obtained from
	$\Rc$ by adding a new root node \emph{without pointer} (as its right child),
	and let $\Dc$ be the class obtained from $\Rc$ by deleting the root node but (if existent)
\emph{keeping its pointer}.\footnote{This means in particular, that a single leaf, being root of a
size 0 object, simply disappears. Furthermore, an object with a root having no pointers will
become disconnected at the root. The pointers from the right to the left subtree remain. However,
this construction will only be used when the root has a pointer.} Then,
	\begin{align*}		
		I(z) &= \int \Rgf(z) \, dz, \\
		D(z) &= \frac{d}{dz} \Rgf(z).
	\end{align*}
\end{lemma}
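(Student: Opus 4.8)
The plan is to prove the two statements separately, each by translating the combinatorial operation into a manipulation of the underlying exponential generating function coefficient-by-coefficient, exactly as was done in Lemma~\ref{lem:approot} and Corollary~\ref{coro:R0}. The key point to track is how the pointer-counting factors behave, since these are what distinguish our ``exponential'' calculus from the standard labeled one: recall that in Lemma~\ref{lem:approot} attaching a root together with a pointer contributed a factor $(n+1)$, precisely because the new pointer had $n+1$ admissible targets. Here, by contrast, we attach or remove a root \emph{without} introducing or consuming a pointer, so the relevant factor should be shifted by one, and this mismatch is exactly what produces an integral and a derivative rather than a clean multiplication by $z$.

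First I would handle the operation $\Ic$ (adding a root with no pointer). Take an element of $\Rc$ of size $n$, contributing $\rgf_n$ to the count. Adding a new internal root node (with the given tree as left child and \emph{no} pointer attached to the right) yields an object of size $n+1$, and crucially there is no combinatorial choice to be made, so each size-$n$ element produces exactly one size-$(n+1)$ object. Thus the coefficient sequence is merely shifted: the number of size-$(n+1)$ objects in $\Ic$ equals $\rgf_n$. On the level of generating functions this gives
\begin{align*}
	I(z) &= \sum_{n \geq 0} \rgf_n \frac{z^{n+1}}{(n+1)!} = \int_0^z \Rgf(u)\, du,
\end{align*}
which is the claimed formula (up to the choice of integration constant, fixed by $I(0)=0$ since $\Ic$ has no object of size $0$). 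The subtle contrast with Lemma~\ref{lem:approot} is that there the factor $(n+1)$ in the numerator cancelled one factorial and produced $z\Rgf(z)$, whereas here the \emph{absence} of that factor leaves the full factorial $(n+1)!$ in the denominator, which is exactly the signature of integration.

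Next I would treat $\Dc$ (deleting the root, keeping its pointer). This is the inverse-type operation: an element of $\Rc$ of size $n+1$ whose root is removed gives an object of size $n$, so the number of size-$n$ objects in $\Dc$ should be $\rgf_{n+1}$. The coefficient extraction then reads
\begin{align*}
	D(z) &= \sum_{n \geq 0} \rgf_{n+1} \frac{z^n}{n!} = \frac{d}{dz}\Rgf(z),
\end{align*}
since differentiating $\Rgf(z)=\sum_n \rgf_n z^n/n!$ term-by-term shifts the index down by one and again matches the factorials correctly. The main obstacle — and the place where I would be most careful — is the bookkeeping flagged in the paper's footnote: deleting a root may disconnect the object or leave a dangling pointer, and one must argue that the map on the combinatorial level is genuinely a size-$(n+1)$-to-size-$n$ bijection preserving the counting weights (in particular that the retained pointer does not create or destroy admissible configurations). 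Once one grants that $\Ic$ and $\Dc$ are set up so that their coefficient sequences are the shifts of $(\rgf_n)$ described above, both generating-function identities are immediate; the real content is the careful definition of the operations rather than any analytic difficulty.
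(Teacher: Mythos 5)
Your proof is correct and follows essentially the same route as the paper: both establish that the operations shift the coefficient sequence by one (size-$n$ objects of $\Rc$ corresponding bijectively to size-$(n+1)$ objects of $\Ic$ and size-$(n-1)$ objects of $\Dc$) and then read off the integral and derivative from the exponential generating function. Your added remark contrasting the absent factor $(n+1)$ with Lemma~\ref{lem:approot} is a helpful gloss but not a departure from the paper's argument.
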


\begin{proof}
	Adding a new root node increases the size by one, whereas deleting it decreases it by one. Hence, 
	elements of $\Rc$ of size $n$ are in bijection with elements of
	$\Ic$ of size $n+1$ as well as with elements of $\Dc$ of size $n-1$, compare Figure~\ref{fig:adddel}. Therefore, we get
	\begin{align*}		
		I(z) &= \sum_{n \geq 0} \rgf_{n} \frac{z^{n+1}}{(n+1)!} = \int \Rgf(z) \, dz, \\
		D(z) &= \sum_{n \geq 1} \rgf_{n} \frac{z^{n-1}}{(n-1)!} = \frac{d}{dz} \Rgf(z). \qedhere
	\end{align*}
\end{proof}

\begin{figure}[htb]
	\centering
	\includegraphics[width=0.25\textwidth]{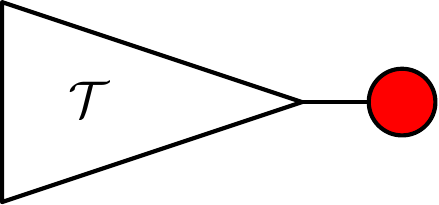}
	\qquad
	\includegraphics[width=0.25\textwidth]{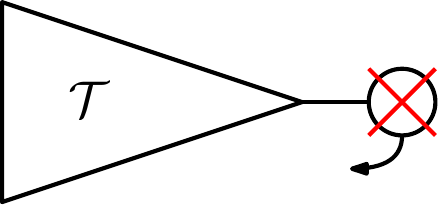}
	\qquad
	\includegraphics[width=0.25\textwidth]{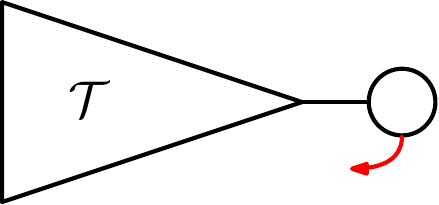}
	\caption{\small Adding a new root node without pointer, deleting a root
		node while preserving its (possible) pointer, 
		and adding a new pointer to the existing root node.}
	\label{fig:adddel}
\end{figure}

These constructions can then be used to derive the following two operations:

\begin{prop}[Sequences and pointers]
	\label{prop:seqpoint}
	The generating function $S(z)$ corresponding to the class obtained by 
appending an arbitrary (possibly empty but finite) sequence of nodes to the root (each with one pointer) to a class $\Rc$ is given by 
	\begin{align*}
		S(z) &= \frac{1}{1-z} R(z).
	\end{align*}
	The generating function $P(z)$ of the class obtained by adding a new,
	additional pointer to the root nodes of the objects of a class $\Rc$ is given by
	\begin{align*}
		P(z) &= z \frac{d}{dz} R(z) + \rgf_0.
	\end{align*}
\end{prop}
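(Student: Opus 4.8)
The plan is to treat the two generating functions separately, reducing each to an iterated or single application of the elementary root operations established in Lemma~\ref{lem:approot} and Lemma~\ref{lem:adddelnopointer}. In both cases the work is a bookkeeping argument on exponential generating functions, and the only genuine care needed is at the boundary (the size-$0$ object, i.e.\ the leaf).

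For $S(z)$ I would recognize the construction as a \emph{sequence} built on top of the single operation of Lemma~\ref{lem:approot}: appending one node carrying a pointer to the root sends $R(z)$ to $z R(z)$. The key observation is that this operation iterates cleanly: after appending $m$ nodes we again obtain a subclass of relaxed trees, so Lemma~\ref{lem:approot} applies verbatim at each stage, and an induction on $m$ shows that appending exactly $m$ nodes contributes $z^m R(z)$. The class $\Sc$ then decomposes as the disjoint union over the length $m \geq 0$ of the appended sequence, with $m=0$ being the empty sequence that returns the original object (contributing $R(z)$). Summing the resulting geometric series gives
\[
S(z) = \sum_{m \geq 0} z^m R(z) = \frac{1}{1-z} R(z).
\]

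For $P(z)$ I would argue by a direct coefficient computation. Fix an object of $\Rc$ of size $n$ and count in how many ways a single additional pointer can be attached to its root. Since the root is the last node visited in the post-order traversal, the new pointer may point to any of the remaining $n$ nodes (the $n-1$ other internal nodes together with the leaf), giving exactly $n$ possibilities for each object of size $n \geq 1$. Hence the number of objects of size $n$ in the new class is $n\,\rgf_n$, and translating to exponential generating functions yields the pointing operator
\[
\sum_{n \geq 1} n\,\rgf_n \frac{z^n}{n!} = z \frac{d}{dz} R(z).
\]

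The main obstacle — really the only subtle point — is the degenerate case $n=0$. A size-$0$ object is a single leaf, which has no node preceding its root in the traversal, so the pointing operator assigns it coefficient $0$; yet the object must still be accounted for. This is precisely why the corrective term $\rgf_0$ appears, restoring the missing size-$0$ contribution and giving $P(z) = z \frac{d}{dz} R(z) + \rgf_0$. I would be careful to make the off-by-one transparent by contrasting it with Lemma~\ref{lem:approot}, where a \emph{freshly added} root contributes $n+1$ choices because it sits above the entire object, whereas here the pointer hangs from the \emph{existing} root and so sees only the $n$ nodes visited before it. Stating this distinction explicitly is what guarantees that the boundary term $\rgf_0$, and not some other constant, is the correct correction.
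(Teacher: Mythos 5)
Your proposal is correct and follows essentially the same route as the paper: the paper's own proof is a one-line reduction to Lemmas~\ref{lem:approot} and \ref{lem:adddelnopointer}, and your geometric-series iteration of the add-root operation for $S(z)$, together with the count of $n$ admissible targets for the extra root pointer (with the $\rgf_0$ boundary correction) for $P(z)$, is exactly the bookkeeping those lemmas encode.
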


\begin{proof}
	This is a direct consequence of the Lemmas~\ref{lem:approot}
	and \ref{lem:adddelnopointer}, compare Figures~\ref{fig:adddel} and \ref{fig:addsequence}.
\end{proof}

\begin{remark}
Note that when applying several consecutive sequence constructions as defined above, then the
resulting structure looks like a single sequence construction. But we would get
several factors $1/(1-z)$ in the generating function, though. This would only be correct if we set a
marker after each application in order to remember where a sequence ends and the next one starts. 

Alternatively, we may simply forbid consecutive sequence constructions. In particular, this means
that $\Rc$ must be built in such a way that appending a sequence of nodes does not generate
consecutive sequence constructions. 

But all this is only a \emph{caveat} in the usage of the sequence construction. When building
relaxed and compacted trees, we never face consecutive sequence constructions, so there is no need
to pay attention to it in our context. 
\end{remark}

\begin{figure}[htb]
	\centering
	\includegraphics[width=0.8\textwidth]{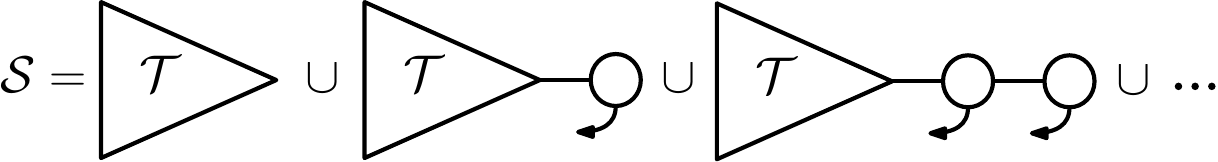}
	\caption{\small Appending a finite (possibly empty) sequence to the root node.}
	\label{fig:addsequence}
\end{figure}

Now we have all operations needed to continue our investigation of trees with bounded right height. 
In the next sections we show how this calculus is used to derive
differential equations for relaxed and compacted trees of bounded right height.

In the sequel, it will prove convenient to work with operators on generating functions.
For this purpose, we will use the same letters for the operators as were used for
the combinatorial classes (or generating functions).

\section{Relaxed binary trees}
\label{sec:relaxed}

We will now show how to use the calculus developed in Section~\ref{sec:operations}
to derive ordinary differential equations for the exponential generating
functions of relaxed trees of bounded right height. 
In this context we introduce the following notation: Let $\Rc$ be the combinatorial class
of relaxed trees. Its exponential generating function is given by
$\Rgf(z) = \sum_{n \geq 0} \rgf_n \frac{z^n}{n!}$ where $\rgf_n$
denotes the number of elements in $\Rc$ of size $n$. 
We denote the class of relaxed trees of right height at most~$k$ by $\Rc_k$
and its corresponding exponential generating function by $\Rgf_k(z) = \sum_{n \geq 0} \rgf_{k,n} \frac{z^n}{n!}$. 

We have derived $\Rgf_0(z)$ in Corollary~\ref{coro:R0} as
\begin{align*}
	\Rgf_0(z) &= \frac{1}{1-z} = \sum_{n \geq 0} n! \frac{z^n}{n!}.
\end{align*}

Let us now consider relaxed trees of right height \emph{at most one}.

\subsection{Relaxed trees of right height at most 1}
\label{sec:R1}

Let $\Rc_1$ be the combinatorial class of relaxed trees with right height at most $1$, compare Figure~\ref{fig:R1l}.
The corresponding generating function is given by $\Rgf_1(z) = \sum_{n \geq 0} \rgf_{1,n} \frac{z^n}{n!}$.

\begin{figure}[htb]
	\centering
	\includegraphics[width=0.9\textwidth]{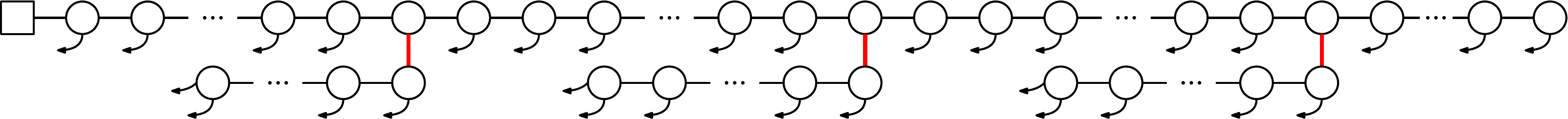}
	\caption{\small A relaxed tree from $\Rc_1$, i.e.~with right height at most $1$.}
	\label{fig:R1l}
\end{figure}

We will break the problem into smaller parts by decomposing $\Rc_1$ according to the following equation
\begin{align}
	\Rgf_1(z) &= \sum_{\ell \geq 0} \Rgf_{1,\ell}(z), \label{eq:R1decomp}
\end{align}
where $\Rgf_{1,\ell}(z)$ is the exponential generating function of
relaxed binary trees with exactly $\ell$ right subtrees, i.e.~$\ell$ 
right edges in the spine going from level $0$ to level $1$. 
Obviously, we have $R_{1,0}(z) = R_0(z) = \frac{1}{1-z}$. 
In order to get $R_{1,1}(z)$, we apply the previously developed constructions.
An illustration of such a tree is shown in Figure~\ref{fig:R11}.

\begin{figure}[htb]
	\centering
	\includegraphics[width=0.6\textwidth]{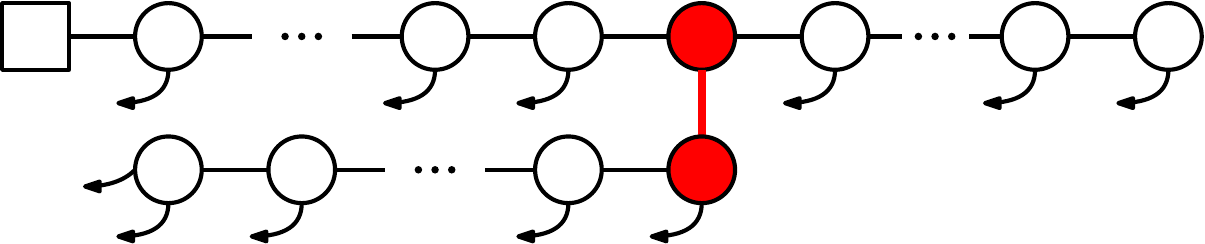}
	\caption{\small A relaxed tree with exactly one right edge in the spine.}
	\label{fig:R11}
\end{figure}

\begin{prop}
	\label{prop:R11}
	The generating function of relaxed trees with exactly one right edge in the spine is given by
	\begin{align*}
		\Rgf_{1,1}(z) &= 
			\frac{1}{1-z} \int{ \frac{1}{1-z} z \left( z \Rgf_{1,0}(z)\right)' \, dz} = 
			\frac{z^2}{2(1-z)^3}.
	\end{align*}
\end{prop}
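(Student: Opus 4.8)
The plan is to decompose a relaxed tree of $\Rc_1$ having exactly one right edge in its spine along that unique edge, and then to read the generating function off by applying the operators of Section~\ref{sec:operations} in the order dictated by the post-order traversal. First I would locate the single right edge: it emanates from one node $v$ lying on the left spine issuing from the root (the nodes reachable by left edges only, which all sit at level~$0$). This splits the object into four pieces: a possibly empty chain of level-$0$ nodes strictly above $v$, the branch node $v$ itself, the level-$1$ subtree hanging to the right of $v$, and the chain of level-$0$ nodes below $v$ which terminates in the unique leaf. Because the tree has no further right edge, the level-$1$ subtree is again a left chain; its topmost node is an ordinary single-pointer chain node, while its bottommost node is a \emph{cherry}, i.e.~both of its children are pointers (the leaf has already been consumed by the lower chain).

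Next I would build $\Rgf_{1,1}(z)$ from the inside out, which matches the order in which the post-order traversal creates the nodes and hence the order in which the pool of admissible pointer targets grows. The lower level-$0$ chain together with the leaf is exactly an object of $\Rc_{1,0}=\Rc_0$, contributing the factor $\Rgf_{1,0}(z)=\tfrac{1}{1-z}$ (Corollary~\ref{coro:R0}). On top of it I attach the cherry at the bottom of the level-$1$ chain: this is a new node carrying \emph{two} pointers, obtained by first adjoining a node with one pointer (Lemma~\ref{lem:approot}, giving $z\Rgf_{1,0}$) and then adding a second pointer to it (the pointer operator $z\frac{d}{dz}$ of Proposition~\ref{prop:seqpoint}), which yields $z\bigl(z\Rgf_{1,0}(z)\bigr)'$. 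The remaining nodes of the level-$1$ chain form a sequence of single-pointer nodes, so the sequence operator of Proposition~\ref{prop:seqpoint} multiplies by $\tfrac{1}{1-z}$. The branch node $v$ carries no pointer of its own, so it is adjoined by the pointer-free root operator of Lemma~\ref{lem:adddelnopointer}, i.e.~by integration. Finally the level-$0$ chain above $v$ is another sequence of single-pointer nodes, contributing a last factor $\tfrac{1}{1-z}$, and composing these steps reproduces exactly $\Rgf_{1,1}(z)=\tfrac{1}{1-z}\int \tfrac{1}{1-z}\,z\,(z\Rgf_{1,0}(z))'\,dz$.

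The main obstacle is the correct bookkeeping of the pool, that is, of how many already-created subtrees each pointer may target. Two points require care. First, the bottom node of the level-$1$ chain is a cherry and therefore needs two independent pointers rather than one; this is precisely why a bare factor $z$ does not suffice and the composite operator $z\frac{d}{dz}$ must be applied to $z\Rgf_{1,0}$, and one should check that the constant-term correction $\rgf_0$ appearing in Proposition~\ref{prop:seqpoint} is harmless here because $z\Rgf_{1,0}$ has no constant term. Second, although the lower chain and the level-$1$ subtree are genuinely separate children of $v$, the operators only record sizes and the resulting pool growth, and the post-order visits the lower chain, then the level-$1$ subtree, then $v$; hence stacking them as successive operator applications assigns to every pointer exactly the set of targets it may reach, so the generating-function identity holds even though the intermediate structures are not literal subtrees. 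I would close by fixing the integration constant through the requirement $\Rgf_{1,1}(0)=0$ (equivalently $\rgf_{1,1,0}=\rgf_{1,1,1}=0$) and, as a sanity check, verifying the first few coefficients against a direct count.
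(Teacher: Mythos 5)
Your proposal is correct and follows essentially the same route as the paper: the same four-part decomposition at the unique right edge, the same key observation that the cherry's two pointers range over the lower level-$0$ chain (so that stacking the level-$1$ chain on top of that $\Rc_0$-instance — the paper's explicit cut-and-glue into a single sequence-like object — yields $\tfrac{1}{1-z}\,z(zR_{1,0})'$), integration for the pointer-free branch node, and a final sequence factor. Your explicit remarks on the vanishing constant term in the pointer operator and on the post-order justification of the pool bookkeeping are sound and match the paper's bijection argument.
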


\begin{proof}
	The idea is to decompose the structure of $\Rc_{1,1}$ into smaller parts
	which are in bijection to constructible classes. 
	\begin{enumerate}
		\item \label{item:R11a} 
		On level $0$ there is a unique node with one right edge, see Figure~\ref{fig:R11}.
Before this node there is a possibly empty sequence of nodes corresponding to the sequence
construction given by the operator $\Sc$. Call this the initial sequence. First consider a relaxed
tree with empty initial sequence, see Figure~\ref{fig:R11dropseq}.
		
		\begin{figure}[htb]
			\centering
			\includegraphics[width=0.3\textwidth]{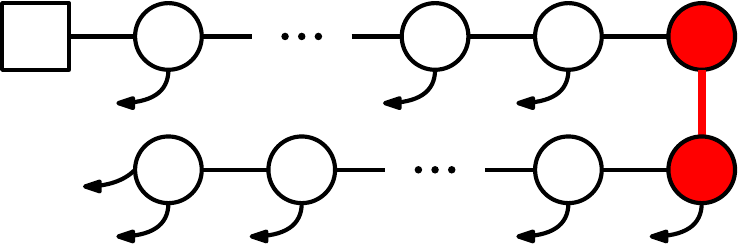}
			\caption{\small Step~$1$: An element of $\Rc_{1,1}$ with empty sequence of
initial nodes on level~$0$.}
			\label{fig:R11dropseq}
		\end{figure}
	
		\item \label{item:R11b} 
		On level $0$, the left child of the unique node with two children (and without pointer)
		is followed by a sequence of nodes, whose pointers may only point to nodes of
the sequence.
		This is an element of $\Rc_0$ and thus counted by $\Rgf_0(z)$.
		
		Furthermore, we see that the elements on level $1$ form a sequence with a cherry
		as its last element. Its pointers may also point to nodes from the sequence
discussed in the previous paragraph, which is in bijection with $\Rc_0$.
		By moving the $\Rc_0$-instance of level $0$ to the end of the sequence on level $1$
		we get a sequence containing one special node which has two pointers. 
		Then we delete the last node on level $0$, 
		compare with Figure~\ref{fig:R11decompb}. 
		
		In terms of generating functions we get
	\begin{align}
		\hat{R}_{1,0}(z) := \frac{1}{1-z} \underbrace{z\left( z \Rgf_{1,0}(z)
\right)'}_{\text{add root with two pointers}}. \label{eq:addrootwith2pointers}
	\end{align}
	Note that due to the cherry every element has at least one internal node. 	
		\begin{figure}[htb]
			\centering
			\includegraphics[width=0.35\textwidth]{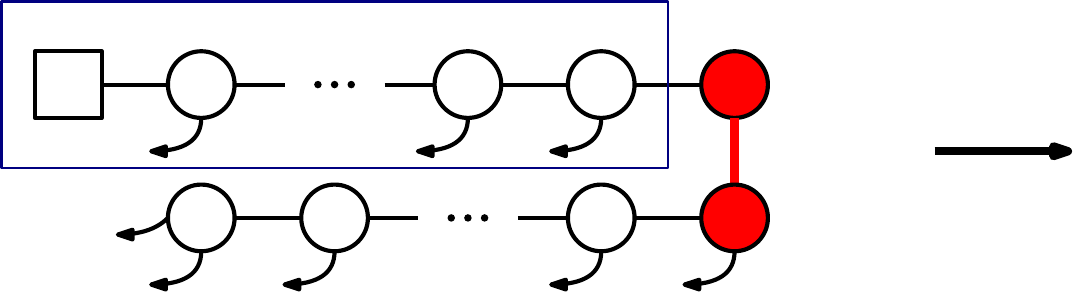}
			\qquad
			\includegraphics[width=0.5\textwidth]{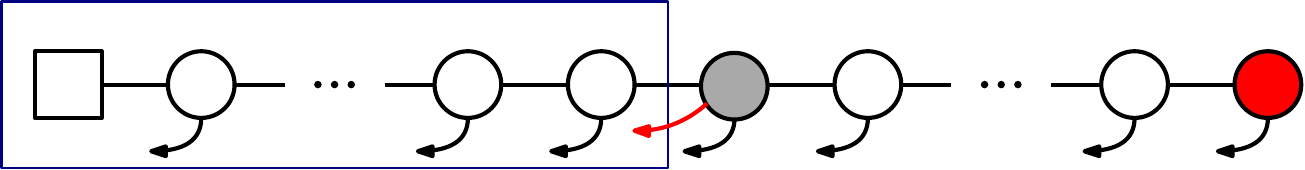}
			\caption{\small Step~$2$: Nodes of level~$0$ can only point to nodes on level~$0$ (left); 
				moving these nodes to level $1$ and deleting the remaining node at level~$0$
				gives $\hat{R}_{1,0}(z)$ (right).}
			\label{fig:R11decompb}
		\end{figure}

		\item
		Furthermore, notice that the node on level $0$ containing a right child (and not a right pointer)
		has no pointers. However, elements of the initial sequence may point to it.
		Therefore, we reinsert this node by adding it as a new root without pointer.
		The constructed object bijectively corresponds to the elements of $\Rc_{1,1}$ with
empty initial sequence. 
	
		\item 		
		Finally, we append an initial sequence (cf. Step~$1$). 
	\end{enumerate}
After those steps, the resulting object looks like shown in Figure~\ref{fig:R11decompfinished}:
a sequence with two special nodes, one having no pointer, the other one having two pointers. The class
of all such elements is in bijection with $\Rc_{1,1}$, as all the steps above can be reverted. 
		\begin{figure}[htb]
			\centering
			\includegraphics[width=0.7\textwidth]{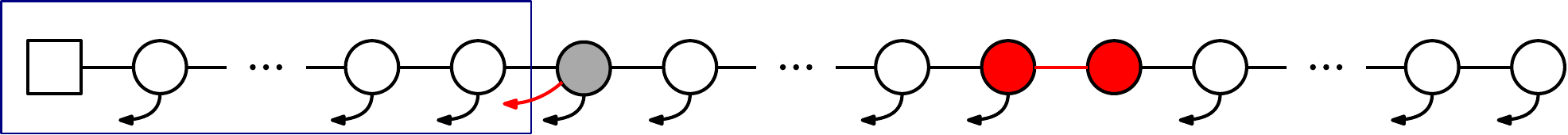}
			\caption{\small Step~$4$: The final sequence-like object bijectively corresponding to $\Rc_{1,1}$.}
			\label{fig:R11decompfinished}
		\end{figure}

Now we have to translate the operations performed in the four steps into algebraic operations on
generating functions. As already mentioned, after Step~$2$ the class of objects we get in that
way has generating function $\frac{1}{1-z} z\left( z \Rgf_{1,0}(z)\right)'$. The operation in Step~$3$ corresponds to integrating the generating function by Lemma~\ref{lem:adddelnopointer}. The final
step is the application of the operator $S$ of Proposition~\ref{prop:seqpoint} and therefore
generates a factor $\frac{1}{1-z}$, which completes the proof. 
\end{proof}

The main idea of the previous proof was to cut and glue the $\Rc_{1,1}$-instance in such a way
that a sequence-like object appears such that the process forms a bijection from $\Rc_{1,1}$ to
the class of sequence-like objects of the form shown in Figure~\ref{fig:R11decompfinished}. 
This new object has the advantage of being
constructible by the operations introduced in Section~\ref{sec:operations}. 

The previous decomposition captures all necessary mechanics to compute $\Rgf_{1,\ell}(z)$.

\begin{coro}
	\label{coro:R1l}
	The generating function of relaxed trees $\Rc_{1,\ell}$ with exactly $\ell$
	right edges in the spine from level $0$ to level $1$ is given by
	\begin{align*}
		\Rgf_{1,\ell}(z) &= \frac{1}{1-z} \int{ \frac{1}{1-z} z \left( z
\Rgf_{1,\ell-1}(z)\right)' \, dz}, \qquad \ell \ge 1,\\
		\Rgf_{1,0}(z) &= \Rgf_0(z) = \frac{1}{1-z}.
	\end{align*}
\end{coro}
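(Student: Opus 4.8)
The plan is to prove Corollary~\ref{coro:R1l} by induction on $\ell$, generalizing the four-step decomposition that established Proposition~\ref{prop:R11}. The base case $\ell=0$ is exactly Corollary~\ref{coro:R0}, so nothing is to be done there. The inductive step is to show that the same cut-and-glue bijection used for $\ell=1$ carries over verbatim when the single right edge on level~$0$ is replaced by an arbitrary element of $\Rc_{1,\ell-1}$ hanging off the right child of the unique node that crosses from level~$0$ to level~$1$.

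Concretely, I would first observe that an element of $\Rc_{1,\ell}$ decomposes as follows: reading along level~$0$, there is an initial (possibly empty) sequence of nodes, then a distinguished node whose right child opens a structure with exactly $\ell-1$ further right edges in the spine, i.e.~an element of $\Rc_{1,\ell-1}$, and whose left subtree together with the continuation of level~$0$ behaves exactly as in the $\ell=1$ case. The key point is that the $\Rc_0$-instance sitting on level~$0$ (the left part) and the cherry-terminated sequence on level~$1$ recombine, after moving the level-$0$ tail up and deleting the remaining top node, into the operator
\begin{align*}
	\frac{1}{1-z}\, z\left( z\, \Rgf_{1,\ell-1}(z)\right)',
\end{align*}
precisely as in Equation~\eqref{eq:addrootwith2pointers}, but now with $\Rgf_{1,0}$ replaced by $\Rgf_{1,\ell-1}$. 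The reason this substitution is legitimate is that the pointers of the newly created two-pointer node may point into the left $\Rc_0$-part (accounting for the factor $z$ and the derivative, via Lemma~\ref{lem:approot} and Proposition~\ref{prop:seqpoint}), and the right substructure is, by the induction hypothesis, freely an element of $\Rc_{1,\ell-1}$ without any interaction that would change the translation of the operators.

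Having established the decomposition combinatorially, the translation into generating functions is then a direct repetition of the argument in Proposition~\ref{prop:R11}: Step~3 (reinserting the pointerless node that carries the right edge) contributes an integration by Lemma~\ref{lem:adddelnopointer}, and Step~4 (prepending the initial sequence) contributes the factor $\frac{1}{1-z}$ by the operator $S$ of Proposition~\ref{prop:seqpoint}. Composing these operations yields exactly the claimed recursion
\begin{align*}
	\Rgf_{1,\ell}(z) &= \frac{1}{1-z}\int \frac{1}{1-z}\, z\left( z\, \Rgf_{1,\ell-1}(z)\right)' \, dz.
\end{align*}

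The main obstacle I anticipate is not the algebra but the careful verification that the bijection of Proposition~\ref{prop:R11} really does lift unchanged: one must check that introducing a nontrivial $\Rc_{1,\ell-1}$-structure on the right does not create any new constraints on where the pointers of the level-$0$ nodes or of the relocated two-pointer node may point, so that the operators $S$, the derivative, and the integral still apply without correction terms. Intuitively this holds because the right subtree lives entirely on levels $\geq 1$ relative to the distinguished node and is constructed independently, so the pointer bookkeeping for the level-$0$ part is identical to the $\ell=1$ situation; making this independence precise is the one point that deserves explicit care in the full proof.
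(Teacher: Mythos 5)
Your high-level plan (reuse the cut-and-glue bijection of Proposition~\ref{prop:R11} with the ``initial value'' $\Rgf_{1,0}$ replaced by $\Rgf_{1,\ell-1}$) is the paper's argument, and the operator translation in your third paragraph is correct. But the combinatorial decomposition you give to justify it is wrong, and in fact internally inconsistent. You place the $\Rc_{1,\ell-1}$-instance at the \emph{right} child of ``the unique node that crosses from level~$0$ to level~$1$.'' First, for $\ell>1$ there is no unique such node --- there are $\ell$ of them, and one must cut at the \emph{first} one (the one nearest the root). Second, an element of $\Rc_{1,\ell-1}$ with $\ell-1\geq 1$ right edges cannot hang off a level-$1$ node: its right edges would then run from level~$1$ to level~$2$, contradicting right height at most~$1$. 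The correct picture is the mirror image of what you wrote: after cutting at the first crossing node, its \emph{right} child carries exactly the same object as in the $\ell=1$ case (a level-$1$ sequence terminated by a cherry), while its \emph{left} child, i.e.\ the continuation of level~$0$ together with the remaining $\ell-1$ right edges and their level-$1$ attachments, is the $\Rc_{1,\ell-1}$-instance. This is why $\Rgf_{1,\ell-1}$ sits in the innermost position of the operator composition, where $\Rgf_{1,0}$ sat before: it is the object to which the cherry node with its two pointers is prepended by $z\left(z\,\cdot\,\right)'$.

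The inconsistency surfaces in your own translation step: you describe the left part as ``the $\Rc_0$-instance sitting on level~$0$'' and simultaneously claim that recombining it with the cherry-terminated sequence yields $\frac{1}{1-z}\,z\left(z\Rgf_{1,\ell-1}(z)\right)'$; if the left part really were an $\Rc_0$-instance, this would produce $\frac{1}{1-z}\,z\left(z\Rgf_0(z)\right)'$, and the $\Rc_{1,\ell-1}$-instance you attached to the right child would appear nowhere in the formula. Once the decomposition is corrected as above, your remaining concern (that the pointer bookkeeping still works) is resolved exactly as in Proposition~\ref{prop:R11}: the cherry's two pointers and the level-$1$ sequence's pointers may target any spine node of the $\Rc_{1,\ell-1}$-instance because all of those nodes precede them in post-order, which is precisely what the relocation into a single sequence-like object encodes.
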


\begin{proof}
	By cutting at the first right edge from level~$0$ to level~$1$,
	we observe a decomposition into an initial sequence, a right edge
	from level~$0$ to level~$1$ with its two endnodes being a sequence on level~$1$
	and an instance counted by $R_{1,\ell-1}(z)$. 
	The decomposition is exhibited in Figure~\ref{fig:R1lstructure}.
	Thus, we may reuse the construction from Proposition~\ref{prop:R11}
	by replacing the initial value $R_{1,0}(z)$ by $R_{1,\ell-1}(z)$.
\end{proof}

\begin{figure}[htb]
	\centering
	\includegraphics[width=0.8\textwidth]{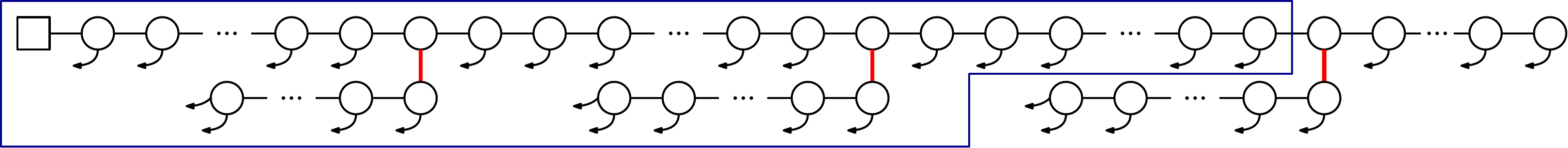}
	\caption{\small A recursive decomposition of elements from $\Rc_{1,\ell}$.
		}
	\label{fig:R1lstructure}
\end{figure}
Finally, we are able to combine the previous results to derive
the generating function of~$\Rc_1$. We need the classical notation of \emph{double factorials}:
\begin{align*}
	n!! &:= \prod\limits_{k=0}^{\lfloor \frac{n-1}{2} \rfloor} (n - 2k), \qquad \text{ for }  n \in \N.
\end{align*}
\begin{theo}
	\label{theo:R1}
	The exponential generating function of relaxed trees of right height at most $1$ is D-finite and satisfies
	\begin{align*}
		(1-2z) \Rgf_1'(z) - \Rgf_1(z) &= 0, &
		\Rgf_1(0)=1.
	\end{align*}
	The closed-form formula and the coefficients are given by
	\begin{align*}
		\Rgf_1(z) &= \frac{1}{\sqrt{1-2z}}, &
		\rgf_{1,n} &= (2n-1)!!.
	\end{align*}
\end{theo}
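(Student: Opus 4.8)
The plan is to sum the recursion of Corollary~\ref{coro:R1l} over all~$\ell$ and turn the resulting integral equation into the claimed differential equation. Introduce the linear operator
\[
	T[f](z) = \frac{1}{1-z}\int \frac{1}{1-z}\, z\,(z f(z))'\,dz,
\]
so that Corollary~\ref{coro:R1l} together with Proposition~\ref{prop:R11} reads $\Rgf_{1,\ell} = T[\Rgf_{1,\ell-1}]$ for all $\ell \geq 1$. First I would check that $T$ raises the valuation (the order of the lowest nonzero coefficient) of a power series by exactly~$2$; consequently $\Rgf_{1,\ell}$ has valuation $2\ell$, the family $(\Rgf_{1,\ell})_{\ell \geq 0}$ is summable as a formal power series, and $T$ commutes with the infinite sum. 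This is the one point that needs justification, and I regard it as the only genuine subtlety: everything downstream is formal manipulation, but the interchange of $T$ with $\sum_\ell$ (and hence the validity of the fixed-point equation) rests on this convergence in the formal topology.

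Granting the summability, applying $T$ to $\Rgf_1 = \sum_{\ell \geq 0}\Rgf_{1,\ell}$ and using $\Rgf_{1,0}=\frac{1}{1-z}$ yields the fixed-point equation
\[
	\Rgf_1(z) = \frac{1}{1-z} + \frac{1}{1-z}\int \frac{1}{1-z}\, z\,(z \Rgf_1(z))'\,dz.
\]
Next I would clear one denominator and differentiate to eliminate the integral. Multiplying by $(1-z)$ and differentiating gives $-\Rgf_1 + (1-z)\Rgf_1' = \frac{z}{1-z}(\Rgf_1 + z\Rgf_1')$; multiplying once more by $(1-z)$ and collecting the coefficients of $\Rgf_1$ and $\Rgf_1'$ makes the $z^2$-terms in $(1-z)^2\Rgf_1' - z^2\Rgf_1'$ cancel and leaves exactly $(1-2z)\Rgf_1'(z) - \Rgf_1(z)=0$. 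The initial condition $\Rgf_1(0)=1$ comes from $\rgf_{1,0}=1$ (the single leaf is the only relaxed tree of size~$0$), which already pins down the solution uniquely; D-finiteness is then immediate, since this is a linear ODE with polynomial coefficients.

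Finally I would solve the ODE and read off the coefficients. Separating variables in $(1-2z)\Rgf_1' = \Rgf_1$ and imposing $\Rgf_1(0)=1$ gives $\Rgf_1(z) = (1-2z)^{-1/2}$. Expanding by the generalized binomial theorem,
\[
	[z^n](1-2z)^{-1/2} = \binom{-1/2}{n}(-2)^n = \frac{(2n-1)!!}{n!},
\]
and since $\Rgf_1(z) = \sum_{n \geq 0} \rgf_{1,n}\frac{z^n}{n!}$ this yields $\rgf_{1,n} = (2n-1)!!$, as claimed.
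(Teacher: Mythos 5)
Your proposal is correct and follows essentially the same route as the paper: both sum the recursion of Corollary~\ref{coro:R1l} over all $\ell$, convert the resulting integral relation into the first-order ODE $(1-2z)\Rgf_1'-\Rgf_1=0$, and solve by separation of variables. The only differences are cosmetic — you sum first and then differentiate (justifying the interchange with a valuation argument, which the paper leaves implicit), and you extract coefficients via the generalized binomial theorem rather than via $\frac{1}{\sqrt{1-4z}}=\sum_{n\geq 0}\binom{2n}{n}z^n$.
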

\begin{remark}
For more on the general background of $D$-finite functions we refer to Stanley's excellent book~\cite{stan99}.
Furthermore, note that the falling double factorials count many combinatorial families, see \OEIS{A001147}.
A bijective interpretation of this behavior was found by the last author in~\cite{Wallner19R1}.
\end{remark}
\begin{proof}
	We start with the result of Corollary~\ref{coro:R1l}. But instead of the integral representation, we use the following differential equation valid for $\ell \geq 1$:
	\begin{align*}
		(1-z) \left( (1-z) \Rgf_{1,\ell}(z) \right)' =  z\left( z \Rgf_{1,\ell-1}(z) \right)'.
	\end{align*}
	Remembering the initial decomposition \eqref{eq:R1decomp} and summing over all $\ell \geq
1$ we get
	\begin{align*}
		(1-z) \left( (1-z) \left( \Rgf_1(z) - \Rgf_{1,0}(z) \right) \right)' =  z\left( z \Rgf_{1}(z) \right)'.
	\end{align*}
	Rearranging this equation and replacing $\Rgf_{1,0}(z)$ by $\Rgf_0(z)$ we get
	\begin{align}
		(1-2z) \Rgf_1'(z) - \Rgf_1(z) - (1-z) \left((1-z)\Rgf_0(z)\right)' = 0. \label{eq:R1dfinite}
	\end{align}
	Now, $\Rgf_0(z) = \frac{1}{1-z}$, hence the differential equation simplifies to 
	\begin{align*}
		(1-2z) \Rgf_1'(z) - \Rgf_1(z) = 0.
	\end{align*}
	Solving this equation by separation of variables yields the closed-form expression.
	Finally, the extraction of the coefficients is easy using
	$\frac{1}{\sqrt{1-4z}} = \sum_{n \geq 0} \binom{2n}{n} z^n$.
\end{proof}

\subsection{Relaxed trees of right height at most 2}
\label{sec:R2}

Let $\Rc_2$ be the combinatorial class of relaxed trees with right height at most $2$, compare Figure~\ref{fig:R2}.
The corresponding generating function is given by $\Rgf_2(z) = \sum_{n \geq 0} \rgf_{2,n} \frac{z^n}{n!}$.

\begin{figure}[htb]
	\centering
	\includegraphics[width=0.9\textwidth]{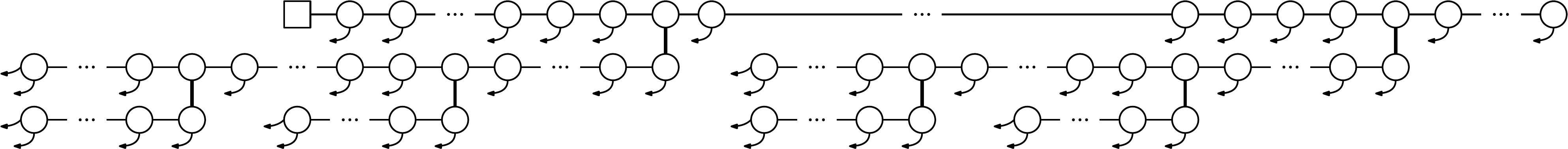}
	\caption{\small A relaxed tree from $\Rc_2$, i.e.~with right height at most $2$.}
	\label{fig:R2}
\end{figure}

In the same fashion as before, we will break the problem
into smaller parts by decomposing $\Rc_2$ into
\begin{align}
	\Rgf_2(z) &= \sum_{\ell \geq 0} \Rgf_{2,\ell}(z), \label{eq:R2decomp}
\end{align}
where $\Rgf_{2,\ell}(z)$ is the exponential generating function of relaxed trees of right height
at most~$2$ with exactly $\ell$ right edges in the spine
going from level $0$ to level $1$. Obviously, we have $R_{2,0}(z) = R_0(z) = \frac{1}{1-z}$. 

\begin{remark}\label{rem:incond}
Note that, as seen in the sequel, the functions $R_{k,0}(z)=\frac{1}{1-z}$ are in fact the
perturbation of the recurrence of differential equations we are currently building. Moreover, they
also uniquely determine the initial condition of this recurrence. Therefore, we will sloppily call
these functions as well as others in the same role ``initial conditions''. This should not be
confused with the initial conditions of the differential equations themselves. Those do not play
any role in our arguments, so the risk of confusion should be low. 
\end{remark}

\begin{prop}
	\label{prop:R21}
	The exponential generating function of relaxed trees of right height at most~$2$
	with exactly one right edge from level $0$ to level $1$ in the spine satisfies
	\begin{align}
		(1-2z) \left( (1-z) \Rgf_{2,1}(z) \right)'' 
			- \left( (1-z) \Rgf_{2,1}(z) \right)' 
			- \left( z ( z\Rgf_{2,0}(z) )' \right)' = 0. \label{eq:R21dfinite}
	\end{align}
\end{prop}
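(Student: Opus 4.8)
The plan is to mirror, almost verbatim, the bijective decomposition used for $\Rgf_{1,1}$ in Proposition~\ref{prop:R11}, the only structural change being that the subtree hanging off the unique right edge now lives on levels $1$ and $2$ and is therefore a relaxed tree of right height at most $1$, i.e.~an $\Rc_1$-object, rather than a chain capped by a cherry. Concretely, I would cut an element of $\Rc_{2,1}$ at its single right edge from level $0$ to level $1$. This exposes an initial sequence on level $0$, the special node carrying the right edge (which bears no pointer), a left sequence on level $0$ that is an $\Rc_0$-instance counted by $\Rgf_{2,0}=\Rgf_0=\frac{1}{1-z}$, and the upper $\Rc_1$-object. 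Exactly as in Proposition~\ref{prop:R11}, I would then merge the level-$0$ left sequence into the base of the upper object through a two-pointer junction node (so that the pointers aimed from the upper part at level-$0$ vertices become internal and are counted once), reinsert the pointerless special node by the integration operator of Lemma~\ref{lem:adddelnopointer}, and finally prepend the initial sequence via the operator $\frac{1}{1-z}$ of Proposition~\ref{prop:seqpoint}. This yields $\Rgf_{2,1}(z)=\frac{1}{1-z}\int Y(z)\,dz$, where $Y$ is the generating function of the merged ``upper-object-over-seed'', so that $Y=\bigl((1-z)\Rgf_{2,1}(z)\bigr)'$.

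The decisive step is to identify the equation satisfied by $Y$. In the $\Rgf_{1,1}$ case the upper part is a mere chain, and one checks that $Y_1=\frac{1}{1-z}z(z\Rgf_{1,0})'$ satisfies the $\Rc_0$-operator equation $(1-z)Y_1'-Y_1=\bigl(z(z\Rgf_{1,0})'\bigr)'$, the operator $(1-z)\frac{d}{dz}-1$ being the one governing $\Rgf_0$. Replacing the $\Rc_0$-upper by an $\Rc_1$-upper replaces this operator by the one governing $\Rgf_1$ in Theorem~\ref{theo:R1}, namely $(1-2z)\frac{d}{dz}-1$, while leaving the seed term unchanged in form. Thus I expect to prove that the merged object satisfies the inhomogeneous $\Rc_1$-equation
\[
(1-2z)\,Y'(z)-Y(z)=\bigl(z(z\Rgf_{2,0}(z))'\bigr)',
\]
where the inhomogeneity is the same two-pointer seed (from~\eqref{eq:addrootwith2pointers}) that produced $z(z\Rgf_{1,0})'$ in Proposition~\ref{prop:R11}. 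The cleanest way to establish this rigorously is to decompose the upper $\Rc_1$-object further by the number $m$ of its right edges from level $1$ to level $2$, producing seeded analogues of the functions $\Rgf_{1,m}$ of Corollary~\ref{coro:R1l}, and then to sum the resulting differential recursion over $m\ge 1$ precisely as in the proof of Theorem~\ref{theo:R1}, the $m=0$ contribution supplying the source term.

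To finish, I would substitute $Y=\bigl((1-z)\Rgf_{2,1}\bigr)'$ into the displayed equation, obtaining
\[
(1-2z)\bigl((1-z)\Rgf_{2,1}(z)\bigr)''-\bigl((1-z)\Rgf_{2,1}(z)\bigr)'-\bigl(z(z\Rgf_{2,0}(z))'\bigr)'=0,
\]
which is exactly \eqref{eq:R21dfinite}. Note that the order of the equation rises from first (for $\Rgf_{1,1}$) to second (for $\Rgf_{2,1}$) precisely because the upper part is now governed by a genuine first-order differential operator rather than by the algebraic sequence relation $\frac{1}{1-z}$; equivalently, one may first integrate to a first-order equation carrying an integration constant and then differentiate it away, as anticipated in Remark~\ref{rem:incond}.

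The main obstacle is the seeded-$\Rc_1$ identity of the second paragraph: one must verify carefully that the pointers emanating from the upper $\Rc_1$-structure, which in the original object may target any previously visited vertex on level $0$ as well as inside the upper part, are accounted for exactly once after the merge, and that the summation over $m$ reproduces the operator $(1-2z)\frac{d}{dz}-1$ together with precisely the stated source. Once this operator identity is pinned down, the remainder is routine calculus. As a consistency check I would compute the first several coefficients $\rgf_{2,1,n}$ directly from the restriction of the recurrence of Corollary~\ref{coro:relaxedrecurrence} to right height at most $2$ and match them against the power-series solution of \eqref{eq:R21dfinite}.
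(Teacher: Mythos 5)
Your proposal follows essentially the same route as the paper's proof: cut at the unique right edge, merge the trailing level-$0$ sequence into the upper $\Rc_1$-object as a two-pointer node, identify the merged class's generating function as $Y=\bigl((1-z)\Rgf_{2,1}\bigr)'$, and show it satisfies the seeded $\Rc_1$-equation $(1-2z)Y'-Y=\bigl(z(z\Rgf_{2,0})'\bigr)'$, which is exactly the paper's equation for $G$ combined with the substitution \eqref{eq:Gexpressed}. The one point you leave implicit, and rightly flag as the main obstacle, is the non-emptiness correction: summing the seeded recursion over $m\ge1$ first gives $(1-2z)G'-G=(1-z)\bigl((z\Rgf_{2,0})'\bigr)'$ for the unconstrained seeded class, and one must subtract the ``empty'' contribution $(1-z)G_0=(z\Rgf_{2,0})'$ (so that $Y=G-(1-z)G_0$) to arrive at your stated source term, as the paper does explicitly.
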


\begin{proof}
	The main idea is to decompose an object of $\Rc_{2,1}$ again into $4$ parts
	(compare with Figure~\ref{fig:R21decomp}): an initial sequence, the first right edge
	from level $0$ to level $1$, the sequence on level $0$ after this right edge,
	and an instance of $\Rc_{1}$ starting on level $1$ after this right edge.
	Then we use the same transformation idea as in the proof of Proposition~\ref{prop:R11}.
	We take the sequence on level $0$ after the right edge and move it to the
	end of the $\Rc_1$-instance. Note that this is legitimate concerning the pointers.
	But it generates a node with two pointers within an instance of $\Rc_1$.
	With respect to this $\Rc_1$-instance this change happens on its top level to the very 
	left. 
	
	We can now delete the initial sequence and the level $0$ node of the right edge,
	as they can be created again by known operations.
	Let $\mathcal F$ be the class of objects obtained after performing the above operations and $F(z)$ be its generating function.
	Schematically, this class is shown in the bottom of Figure~\ref{fig:R21decomp}.
	By Lemma~\ref{lem:adddelnopointer} and Proposition~\ref{prop:seqpoint} we get
	\begin{align*}
		F(z) &= ((1-z) \Rgf_{2,1}(z))'.
	\end{align*}
	
	Note that $F(z)$ is associated to structures with right height at most $1$.
	It is nearly an instance of $\Rc_1$. There are only two differences: 
	
	First, it has a special construction after its last right edge.
	With respect to the differential equation~\eqref{eq:R1dfinite}, which corresponds to 
	the class $\Rc_1$, this changes the initial condition $\Rc_0$ (recall Remark~\ref{rem:incond}!). 
	Due to linearity, we can reuse this specification by replacing the initial condition.	
	On the level of generating functions this corresponds to replacing $R_0(z)$ by $\frac{1}{1-z} (z \Rgf_{2,0}(z))'$, because a (possibly empty) sequence 
	is followed by a node with a double pointer and an instance of $R_{2,0}(z)$, which is in this case another sequence (compare with Figure~\ref{fig:R11decompb}). 
	Let $\mathcal G$ be the corresponding combinatorial class and $G(z)$ its generating function.
	By~\eqref{eq:R1dfinite} we have
	\begin{align*}
		(1 &-2z)G'(z) - G(z) - (1-z) ((1-z)G_0(z))' = 0, \quad \text{ with } \quad
		G_0(z) = \frac{1}{1-z} (z \Rgf_{2,0}(z))'.
	\end{align*}	

	Second, due to the unique right edge from level $0$ to level $1$, every object in $\mathcal F$ 
	has at least one particular node, namely the red node on level $1$ (compare the transformation shown in Figure~\ref{fig:R21decomp}).
	Let us describe the unfavourable case we need to avoid, namely that there is no such node.
	Looking back at the beginning of the transformation, this case is equivalent to the fact
that the subtree on levels $1$ and $2$ is empty, or in other words, the right edge going from level
0 to level 1 (red egde in Figure~\ref{fig:R21decomp}) is only a pointer. 
	During the transformation process the $\Rc_0$-instance at the end of level 0 is moved to
level one, where it forms then an $\Rc_0$-instance with an additional pointer, namely the above-mentioned pointer being the red edge in Figure~\ref{fig:R21decomp} in the unfavourable case. The
generating function of such structures is $(z \Rgf_{2,0}(z))'$, as we start with an
$\Rc_0$-instance (which can equivalently be regarded as an $\Rc_{2,0}$-instance), add a new root
and then delete this new root, but keep its pointer. Hence, in order to correct for the
unfavourable case we need to subtract $(1-z)G_0(z)$. We get 
	\begin{align*}
		F(z) &= G(z) - (1-z) G_0(z).
	\end{align*}
	This yields 
	\begin{align}
		G(z) &= \left( (1-z) \Rgf_{2,1}(z) \right)' + (z \Rgf_{2,0}(z))'. \label{eq:Gexpressed}
	\end{align}
	Finally, putting everything together some straightforward calculations show \eqref{eq:R21dfinite}.	
\end{proof}

\begin{figure}[htb]
	\centering
	\includegraphics[width=0.6\textwidth]{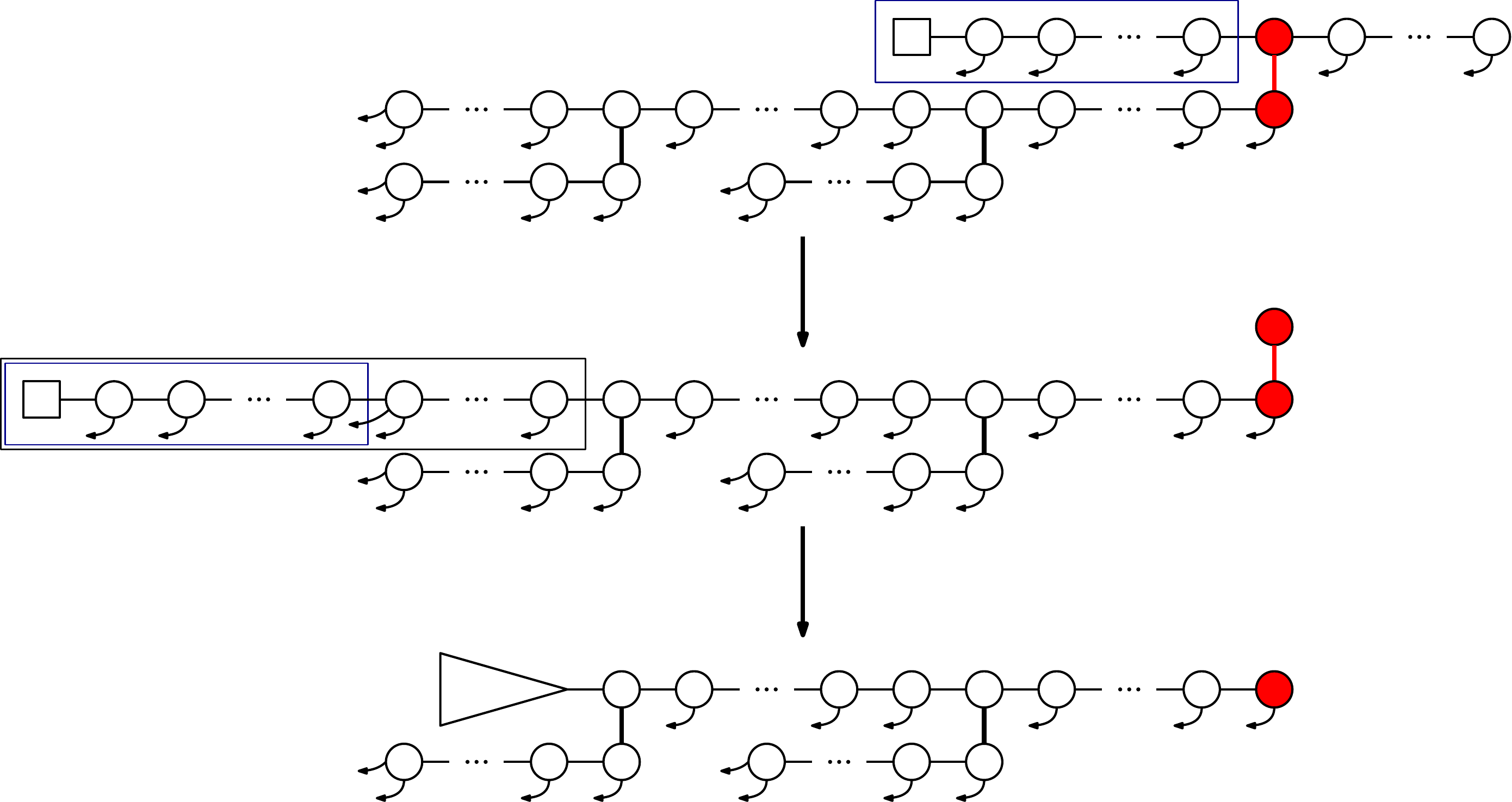}
	\caption{\small Transforming a structure of $\Rc_{2,1}$ into an instance of $\Rc_{1}$.}
	\label{fig:R21decomp}
\end{figure}

As in the $\Rgf_{1,\ell}(z)$ case, we get $\Rgf_{2,\ell}(z)$ for $\ell \geq 2$
by a recursive application of the previous arguments.

\begin{coro}
	\label{coro:R2l}
	The generating function of relaxed trees with right height at most $2$, 
	and exactly $\ell$ right edges in the spine from level $0$ to level $1$ is given by
	\begin{align*}
		&(1-2z) \left( (1-z) \Rgf_{2,\ell}(z) \right)'' 
			- \left( (1-z) \Rgf_{2,\ell}(z) \right)' 
			- \left( z ( z\Rgf_{2,\ell-1}(z) )' \right)' = 0, \qquad \ell \ge 1,\\
		&\Rgf_{2,0}(z) = \Rgf_0(z) = \frac{1}{1-z}.
	\end{align*}
\end{coro}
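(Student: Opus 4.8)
The plan is to follow exactly the same strategy that produced Proposition~\ref{prop:R21} for the case $\ell=1$, but now with the general recursion structure already set up in the $\Rgf_{1,\ell}$ case (Corollary~\ref{coro:R1l}). The key observation is that Corollary~\ref{coro:R2l} is stated as a recurrence in $\ell$, so I only need to show that cutting an $\Rc_{2,\ell}$-instance at its \emph{first} right edge from level~$0$ to level~$1$ reduces the problem to an $\Rc_{2,\ell-1}$-instance, mirroring the proof of Proposition~\ref{prop:R21} verbatim with $\Rgf_{2,0}$ replaced by $\Rgf_{2,\ell-1}$.

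First I would decompose an element of $\Rc_{2,\ell}$ into the four parts already used in the proof of Proposition~\ref{prop:R21}: an initial sequence on level~$0$, the first right edge from level~$0$ to level~$1$, the sequence on level~$0$ \emph{following} this edge, and the remaining structure hanging off the right edge. The crucial point is that, because we cut at the \emph{first} such edge, the remaining structure is precisely an instance of $\Rc_{2,\ell-1}$ (it has $\ell-1$ right edges from level~$0$ to level~$1$ left), exactly as in the analogous step of Corollary~\ref{coro:R1l} where cutting produced an $\Rc_{1,\ell-1}$-instance. The decomposition is the one drawn in Figure~\ref{fig:R1lstructure}, transported to right height~$2$.

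Next I would carry out the identical cut-and-glue transformation: move the level-$0$ sequence following the right edge to the end of the $\Rc_{2,\ell-1}$-instance (legitimate with respect to the pointers, since those pointers may only target that sequence), thereby creating a single node with two pointers within the structure, then delete the initial sequence and the level-$0$ endpoint of the right edge, both of which are reconstructible by the operators of Section~\ref{sec:operations}. This is exactly the passage that in Proposition~\ref{prop:R21} gave $F(z)=((1-z)\Rgf_{2,\ell}(z))'$ and the auxiliary class $\mathcal G$ with initial condition $G_0(z)=\frac{1}{1-z}(z\Rgf_{2,\ell-1}(z))'$. Since the argument is structurally identical, replacing the index $0$ by $\ell-1$ throughout yields the same differential relation with $\Rgf_{2,0}$ replaced by $\Rgf_{2,\ell-1}$, which is precisely the claimed equation.

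The main thing to verify, rather than a genuine obstacle, is that every step of the transformation in Proposition~\ref{prop:R21} remains a valid bijection when the ``tail'' is a general $\Rc_{2,\ell-1}$-instance rather than an $\Rc_{2,0}$-instance. Concretely, one must check that the pointers being relocated still have the same set of admissible targets (the level-$0$ sequence immediately below the right edge), which holds because that sequence sits at level~$0$ and is visited before the level-$1$ material in the post-order traversal, exactly as in the base case. Once this is confirmed, the translation into generating functions is literally the same computation as in Proposition~\ref{prop:R21}, and the homogeneous second-order differential equation follows, with the index shift being the only change. The base case $\Rgf_{2,0}(z)=\Rgf_0(z)=\frac{1}{1-z}$ is immediate, as noted before the statement.
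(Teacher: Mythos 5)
Your proposal is correct and takes essentially the same route as the paper: cut at the first right edge from level~$0$ to level~$1$ and rerun the construction of Proposition~\ref{prop:R21} with the initial value $\Rgf_{2,0}(z)$ replaced by $\Rgf_{2,\ell-1}(z)$, which is exactly what the paper does. The only slip is in your prose, where the roles of the two pieces get swapped: for $\ell>1$ it is the level-$0$ continuation below the first right edge (together with everything hanging above it) that forms the $\Rc_{2,\ell-1}$-instance and gets moved to the end of the $\Rc_1$-instance sitting at levels $1$ and $2$, not the reverse; your formula $G_0(z)=\frac{1}{1-z}\left(z\Rgf_{2,\ell-1}(z)\right)'$ is nevertheless the correct one, so the argument is unaffected.
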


\begin{proof}
	By cutting at the first right edge from level $0$ to level $1$,
	we observe a decomposition into an initial sequence, a right edge
	from level $0$ to level $1$ with $2$ nodes, a sequence on level~$1$
	and an instance counted by $R_{2,\ell-1}(z)$. 
	Thus, we may reuse the construction from the proof of Proposition~\ref{prop:R21}
	by replacing the initial value $R_{2,0}(z)$ with $R_{2,\ell-1}(z)$.
\end{proof}
Note that for the final result it is crucial that we found homogeneous differential equations.

\begin{theo}
	\label{theo:R2}
	The exponential generating function of relaxed trees of right height at most $2$
	is D-finite and satisfies
	\begin{align*}
		(z^2-3z+1) \Rgf_2''(z) + (2z-3) \Rgf_2'(z) &= 0, &
		\Rgf_2(0)=1,~\Rgf_2'(0)=1.
	\end{align*}
	A closed-form formula and the coefficients are given by
	\begin{align*}
		\Rgf_2(z) &= -\frac{2}{\sqrt{5}} \artanh \left( \frac{2z-3}{\sqrt{5}} \right) 
							- \frac{1}{\sqrt{5}}\left( \log \left( \frac{7 + 3\sqrt{5}}{2} \right) - \pi i \right), \\
		\rgf_{2,n} &= \frac{(n-1)!}{\sqrt{5}} \left( \left( \frac{1 + \sqrt{5}}{2} \right)^{2n} - \left( \frac{1 - \sqrt{5}}{2} \right)^{2n} \right).
	\end{align*}
\end{theo}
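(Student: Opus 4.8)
The plan is to mimic the strategy used for $\Rgf_1$ in Theorem~\ref{theo:R1}, but now starting from the differential-equation version of the recurrence provided by Corollary~\ref{coro:R2l}. First I would take the family of homogeneous differential equations
\begin{align*}
	(1-2z)\bigl((1-z)\Rgf_{2,\ell}(z)\bigr)''-\bigl((1-z)\Rgf_{2,\ell}(z)\bigr)'-\bigl(z(z\Rgf_{2,\ell-1}(z))'\bigr)'=0,
\end{align*}
valid for $\ell\geq1$, and sum it over all $\ell\geq1$. Using the decomposition \eqref{eq:R2decomp}, namely $\Rgf_2=\sum_{\ell\geq0}\Rgf_{2,\ell}$, the left-operator terms telescope into an expression in $\Rgf_2-\Rgf_{2,0}$, while the right-hand driving term, after shifting the summation index, produces an expression in $\Rgf_2$ itself. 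This is exactly the mechanism that made homogeneity crucial: the shift $\ell\mapsto\ell-1$ on the coupling term lets the infinite sum close on $\Rgf_2$ rather than leaving a residual inhomogeneity.

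Next I would substitute the known initial condition $\Rgf_{2,0}(z)=\Rgf_0(z)=\tfrac{1}{1-z}$ and carry out the resulting simplification. The summed identity will read, schematically,
\begin{align*}
	(1-2z)\bigl((1-z)(\Rgf_2-\Rgf_0)\bigr)''-\bigl((1-z)(\Rgf_2-\Rgf_0)\bigr)'-\bigl(z(z\Rgf_2)'\bigr)'=0,
\end{align*}
since summing the coupling term over $\ell\geq1$ reconstructs $\Rgf_2$ (the $\ell=0$ term being the perturbation). Expanding all derivatives and inserting $\Rgf_0=\tfrac{1}{1-z}$, whose contributions are explicit rational functions, I expect the leaf/perturbation corrections to collapse, leaving the clean second-order equation $(z^2-3z+1)\Rgf_2''+(2z-3)\Rgf_2'=0$. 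The initial conditions $\Rgf_2(0)=1$ and $\Rgf_2'(0)=1$ follow from the first coefficients $\rgf_{2,0}=1$ and $\rgf_{2,1}=1$, which one reads off either from the combinatorial interpretation or directly from the expansions of the summands.

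For the closed form, I would observe that the equation has no zeroth-order term, so $\Rgf_2'$ satisfies a first-order linear ODE solvable by separation of variables: $\tfrac{\Rgf_2''}{\Rgf_2'}=-\tfrac{2z-3}{z^2-3z+1}$, and integrating gives $\Rgf_2'(z)=\tfrac{c}{z^2-3z+1}$ with $c$ fixed by $\Rgf_2'(0)=1$, i.e.\ $c=1$. A partial-fraction decomposition over the roots $\tfrac{3\pm\sqrt5}{2}$ of $z^2-3z+1$ then yields the $\artanh$ expression after one more integration, the additive constant being pinned down by $\Rgf_2(0)=1$ (this is the source of the $\log$ and $\pi i$ terms). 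The coefficient formula for $\rgf_{2,n}$ follows from expanding $\Rgf_2'(z)=\sum_{n\geq0}\tfrac{\rgf_{2,n+1}}{n!}z^n$ as a geometric-type series in the two reciprocal roots, which produces the stated difference of $n$-th powers scaled by $\tfrac{(n-1)!}{\sqrt5}$.

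The main obstacle I anticipate is the bookkeeping in the summation-and-simplification step: one must carefully track which terms belong to the $\ell=0$ perturbation, verify that the telescoping of the second-order operator is legitimate (convergence of the formal sum in the ring of formal power series is automatic, but the algebra of pulling the operator through the infinite sum must be justified), and confirm that every rational correction coming from $\Rgf_0$ cancels so that the final equation is genuinely homogeneous with polynomial coefficients. Once the differential equation is established, deriving the closed form and the coefficients is routine, as indicated above.
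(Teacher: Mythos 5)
Your proposal is correct and follows essentially the same route as the paper: sum the homogeneous equations of Corollary~\ref{coro:R2l} over $\ell\geq 1$, use the decomposition \eqref{eq:R2decomp}, insert $\Rgf_{2,0}=\frac{1}{1-z}$ (whose corrections vanish since $(1-z)\Rgf_{2,0}\equiv 1$), and then treat the resulting equation as first order in $\Rgf_2'$, which after one integration gives $\Rgf_2'(z)=\frac{1}{z^2-3z+1}$ and hence the closed form and coefficients by partial fractions. The paper phrases the last step as integrating the ODE once to get $(z^2-3z+1)\Rgf_2'(z)=1$ rather than separating variables, but this is the same computation.
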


\begin{proof}
	Again, let us take the result of Corollary~\ref{coro:R2l} and
	sum over all $\ell \geq 1$, while remembering the decomposition
	\eqref{eq:R2decomp}. By linearity this gives
	\begin{align}
		(1-2z) \left( (1-z) (\Rgf_2(z) - \Rgf_{2,0}(z) ) \right)'' 
			- \left( (1-z) (\Rgf_2(z) - \Rgf_{2,0}(z) ) \right)' 
			- \left( z ( z\Rgf_2(z) )' \right)' = 0. \label{eq:R2subst}
	\end{align}
	A simplification gives
	\begin{align*}
		(z^2-3z+1) \Rgf_2''(z) 
			+ (2z-3) \Rgf_2'(z) 
			- (1-2z)((1-z)\Rgf_{2,0}(z))'' + ((1-z) \Rgf_{2,0}(z) )' = 0.
	\end{align*}
	Inserting the initial value $\Rgf_{2,0}(z) = \frac{1}{1-z}$ we get the D-finite expression. 
	The correctness of the closed-form formula can then be easily checked
	with a computer algebra system.
	
	In order to extract the coefficients of $R_2(z)$ we observe that the differential
	equation can be simplified further by an integration with respect to $z$.
	Thus, it is equivalent to
	\begin{align*}
		(z^2-3z+1) \Rgf_2'(z) &=1, & 
		\Rgf_2(0) = 1,
	\end{align*}
	as $\Rgf_2'(0)=1$. Next, observe that as we are dealing with exponential
	generating functions, the derivative is just a shift on the level of coefficients.
	In other words, $[z^n] R_2(z) = [z^{n-1}] R_2'(z)$.
	Therefore, a partial fraction decomposition enables a direct extraction of the coefficients. 
\end{proof}

\subsection{Relaxed trees of right height at most \texorpdfstring{$k$}{k}}
\label{sec:relaxedheightk}

The approach from the previous section can be generalized to an arbitrary bound $k\ge 2$ for the 
right height. Let $\Rgf_k(z) = \sum_{n \geq 0} \rgf_{k,n} \frac{z^n}{n!}$ be the corresponding
generating function. The idea is to use the previous construction, and to derive a differential equation for $\Rgf_k(z)$ from the one of $\Rgf_{k-1}(z)$.

\begin{figure}[htb]
	\centering
	\includegraphics[width=0.8\textwidth]{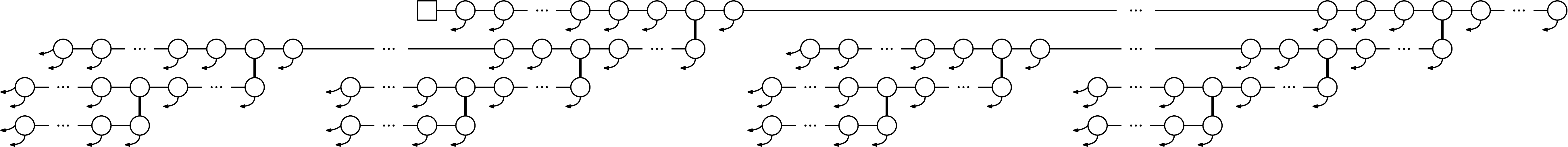}
	\caption{\small A relaxed tree from $\Rc_3$, i.e.~with right height at most $3$.}
	\label{fig:R3}
\end{figure}

We introduce a family of linear differential operators $L_k$, $k \geq 1$, which describe the
differential equations constructed for $\Rgf_k(z)$. Let $D$ denote the differential operator
$\frac{d}{dz}$ and $1$ the identity operator, i.e. $1 \cdot F(z)=F(z)$. For example, $(D \cdot z) (F(z)) = \frac{d}{dz} \left( z F(z) \right)$.
We want to stress at this point that the operators are in general not commutative.
\begin{theo}[Differential operators]
	\label{theo:diffoprelaxed}
	Let $(L_k)_{k \geq 0}$ be a family of differential operators given by
	\begin{align*}
		L_0 &= (1-z) \cdot 1,\\
		L_1 &= (1-2z) D - 1, \\
		L_{k} &= L_{k-1} \cdot D - L_{k-2} \cdot D^2 z, \qquad k \geq 2. 
	\end{align*}
	Then the exponential generating function $\Rgf_k(z)$ of relaxed binary trees with right height at most $k$ satisfies for $k \geq 1$
	\begin{align}\label{mainODE}
		L_k \cdot \Rgf_k(z) &= 0.
	\end{align}
\end{theo}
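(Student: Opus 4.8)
The plan is to prove the statement by induction on $k$, mirroring the route already taken for $k=1$ and $k=2$ in Theorems~\ref{theo:R1} and~\ref{theo:R2}, which serve as the base cases. For the inductive step (for $k\ge 2$, assuming the result for $k-1$) I would decompose $\Rc_k$ according to the number $\ell$ of right edges in the spine leading from level~$0$ to level~$1$, writing $\Rgf_k(z) = \sum_{\ell \geq 0} \Rgf_{k,\ell}(z)$, with $\Rgf_{k,0}(z) = \Rgf_0(z) = \frac{1}{1-z}$ playing the role of the ``initial condition'' in the sense of Remark~\ref{rem:incond}. The goal is first to obtain a recursion expressing $\Rgf_{k,\ell}$ through $\Rgf_{k,\ell-1}$, and then to sum it over $\ell \geq 1$.

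The heart of the argument is this $\ell$-recursion, which I would derive by generalizing the cut-and-glue bijection of Propositions~\ref{prop:R11} and~\ref{prop:R21}. Cutting an element of $\Rc_{k,\ell}$ at its first right edge from level~$0$ to level~$1$ exhibits it as an initial sequence, that right edge, a trailing sequence on level~$0$, and the right subtree rooted at level~$1$, which has right height at most $k-1$. As in Proposition~\ref{prop:R21}, moving the trailing level-$0$ sequence onto the top of this right subtree and deleting the reconstructible pieces turns the object into ``almost an instance of $\Rc_{k-1}$'', the only difference being that its initial condition $\Rgf_0=\frac{1}{1-z}$ is replaced by a modified series built from $\Rgf_{k,\ell-1}$. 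Feeding this modified initial condition into the differential equation $L_{k-1}\cdot(\,\cdot\,)=0$ supplied by the induction hypothesis, and translating the sequence and reinsertion operations via Lemma~\ref{lem:adddelnopointer} and Proposition~\ref{prop:seqpoint}, should yield the operator recursion
\begin{align*}
	L_{k-1}\, D\,(1-z)\, \Rgf_{k,\ell}(z) = \left( L_{k-2}\, D^2 z - L_{k-1}\, D z \right) \Rgf_{k,\ell-1}(z), \qquad \ell \geq 1.
\end{align*}
I would sanity-check this against the known cases: for $k=1$ the left operator is $(1-z)D(1-z)$ and the right operator collapses to $zDz$, recovering Corollary~\ref{coro:R1l}; for $k=2$ it collapses to $DzDz$, recovering Corollary~\ref{coro:R2l}.

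With the $\ell$-recursion in hand, the remaining steps are purely algebraic. Summing over $\ell \geq 1$ and using $\Rgf_k = \Rgf_{k,0} + \sum_{\ell \geq 1}\Rgf_{k,\ell}$ gives
\begin{align*}
	\left[ L_{k-1}\, D\,(1-z) - L_{k-2}\, D^2 z + L_{k-1}\, D z \right] \Rgf_k(z) = L_{k-1}\, D\,(1-z)\, \Rgf_{k,0}(z).
\end{align*}
Since $D(1-z) + Dz = D$ as operators, the left-hand operator simplifies to $L_{k-1}D - L_{k-2}D^2 z$, which is exactly $L_k$ by its definition. On the right, $(1-z)\Rgf_{k,0}(z) = 1$ is annihilated by $D$, so the whole right-hand side vanishes. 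This yields $L_k \Rgf_k(z) = 0$ and closes the induction.

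The main obstacle I anticipate is the rigorous justification of the $\ell$-recursion, and specifically the bookkeeping of the modified initial condition as it is propagated through the operator $L_{k-1}$. The induction hypothesis must be phrased not merely as $L_{k-1}\Rgf_{k-1}=0$, but as a statement about how $L_{k-1}$ acts on the generating function of the $\Rc_{k-1}$-class \emph{with its initial condition replaced by an arbitrary admissible series}, so that the genuine initial condition $\frac{1}{1-z}$ is seen to be precisely the one making the perturbation term vanish. Verifying that the cut-and-glue transformation is a bijection respecting all pointer constraints across levels, and that it produces exactly the right-hand operator $L_{k-2}D^2 z - L_{k-1}D z$, is the delicate combinatorial and operator-algebraic content; once it is established, the summation and the final simplification to $L_k$ are routine.
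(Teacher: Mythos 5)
Your proposal is correct and follows essentially the same route as the paper: decomposing by the number $\ell$ of right edges from level~$0$ to level~$1$, deriving the $\ell$-recursion by reducing to an $\Rc_{k-1}$-like class with a modified initial condition, and summing over $\ell$ so that the right-hand side $L_{k-1}\cdot D\cdot(1-z)\cdot\Rgf_{k,0}(z)$ vanishes. The paper merely packages that right-hand side into an explicit auxiliary operator $H_k = L_{k-1}\cdot D\cdot(1-z)$ and verifies $L_k = L_{k-1}\cdot D - H_{k-1}\cdot\frac{1}{1-z}\cdot D\cdot z$, which is exactly the operator identity your computation produces.
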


\begin{proof}
	For $k \geq 1$ we derive two families of operators:
	The differential operator $L_k$ and an auxiliary operator $H_k$ for the inhomogeneity such that
	\begin{align*}
		L_k \cdot \Rgf_k(z) &= H_k \cdot \Rgf_0(z).
	\end{align*}
	For $k=1$ we derived in~\eqref{eq:R1dfinite} the claimed form $L_1 \cdot R_1(z) = H_1
\cdot R_0(z)$ with $H_1 = L_0 D (1-z)$. 
	
	We continue with the case $k=2$. The explicit differential operator is given in Theorem~\ref{theo:R2}. We will now show how the operator can be constructed from the ones for $R_1(z)$ and $R_0(z)$ in the language of operators.
	
	In Proposition~\ref{prop:R21} we have derived the necessary
	substitution to get the differential equation of $\Rgf_2(z)$
	from the one of $\Rgf_1(z)$. The idea was to decompose $\Rc_2$ with respect to the number $\ell$
	of right edges from level $0$ to level $1$, see Figure~\ref{fig:R21decomp}. 
	This transformation creates an $\Rc_1$-like structure with a new initial condition $G_{0,\ell}(z)$ and the constraint not to be empty. 
	
	From~\eqref{eq:R1dfinite} we get the generic differential equation for $\Rc_1$-like structures with generating function $G_{\ell}(z)$ as
	\begin{align*}
		L_1 \cdot G_{\ell}(z) = H_1 \cdot G_{0,\ell}(z). 
	\end{align*}
	
	First, the new initial condition is given by  
	\begin{align*}
		G_{0,\ell}(z) &= \left(\frac{1}{1-z} D z\right) \cdot \Rgf_{2,\ell-1}(z).
	\end{align*}
	Second, the $\Rc_1$-like class being in bijection to $\Rc_{2,\ell}$ cannot be empty, and the initial sequence on level $0$ has to be appended. Thus, the substitution~\eqref{eq:Gexpressed} has to be used where $\Rgf_{2,1}(z)$ is replaced by $\Rgf_{2,\ell}(z)$, and $\Rgf_{2,0}(z)$ by $\Rgf_{2,\ell-1}(z)$. This gives for $\ell \geq 1$
	\begin{align*}
		L_1 \left( (D (1-z)) \cdot  \Rgf_{2,\ell}(z) + (D z) \cdot \Rgf_{2,\ell-1}(z)
\right) - H_1\left( \left(\frac{1}{1-z} D z\right) \cdot \Rgf_{2,\ell-1}(z) \right) = 0.
	\end{align*}
	Summing over $\ell \geq 1$ and recalling that $\Rgf_2(z) = \sum_{\ell \geq 0} \Rgf_{2,\ell}(z)$ we get
	\begin{align*}
		L_1 \cdot D \cdot \Rgf_2(z) - L_0 \cdot D^2 \cdot z \cdot \Rgf_2(z) = L_1 \cdot D \cdot (1-z) \cdot \Rgf_{2,0}(z).
	\end{align*}
	On the left we see the differential operator $L_2$ applied to $\Rgf_2(z)$ and on the right the inhomogeneity operator $H_2$ applied to $\Rgf_{2,0}(z)$. 
	Inserting $\Rgf_{2,0}(z) = \frac{1}{1-z}$ shows the claim for $k=2$. 
	
	Finally, for larger $k$, we can recycle the previous arguments for $k=2$
	and apply them recursively. This holds, as we may again cut an instance of $\Rc_k$
	 at the first right edge in the spine from level $0$ to level $1$ and decompose
	it in the repeatedly shown fashion, compare with Figure~\ref{fig:R21decomp}. 
	Then the same reasoning as in Section~\ref{sec:R2} allows us to extract
	the differential equation of $\Rgf_k(z)$ from the one of $\Rgf_{k-1}(z)$ by
	\begin{align}
		L_{k} &= L_{k-1} \cdot D - H_{k-1} \cdot \frac{1}{1-z} \cdot D \cdot z, &
		H_{k} &= L_{k-1} \cdot D \cdot (1-z). \label{eq:relLHk}
	\end{align}	
	Hence, by induction the claim holds.
\end{proof}

Let us apply the last theorem and compute the first few differential equations. 
\begin{equation*}
	(1-2z) \frac{d}{dz} \Rgf_1(z) - \Rgf_1(z) = 0,
\end{equation*}
\begin{equation*}
	(z^2 - 3z + 1) \frac{d^{2}}{dz^{2}} \Rgf_2(z) + (2z-3) \frac{d}{dz} \Rgf_2(z) = 0,
\end{equation*}
\begin{equation*}
	(3z^2-4z+1) \frac{d^{3}}{dz^{3}} \Rgf_3(z) + (9z-6) \frac{d^{2}}{dz^{2}}\Rgf_3(z) + 2 \frac{d}{dz} \Rgf_3(z) = 0,
\end{equation*}
\begin{equation*}
	-(z^3-6z^2+5z-1) \frac{d^{4}}{dz^{4}} \Rgf_4(z) - (6z^2-24z+10) \frac{d^{3}}{dz^{3}} \Rgf_4(z) - (6z-11) \frac{d^{2}}{dz^{2}} \Rgf_4(z) = 0.
\end{equation*}
The initial conditions of the differential equations can be obtained successively from lower order solutions. 
In particular, note that due to the construction the coefficients of $z^0,z^1,\ldots,z^{k+1}$ of $\Rgf_k(z)$
are the first $k+2$ elements of the counting sequence of relaxed trees, as a tree of size $k+1$
has always right height at most $k$. Thus with $\Rgf_k(z)$ we can enumerate all relaxed trees up to size $k+1$. 

Next, we take a closer look at these operators. 

\begin{theo}[Properties of $L_k$]
	\label{theo:Dkprop}
	For any $k \in \N$, let $L_k$ be as in Theorem~\ref{theo:diffoprelaxed}. Let $\ell_{k,i}(z) \in \C[z]$ be such that 
	\begin{align}\label{operater_L_k}
		L_k &= \ell_{k,k}(z) D^k
						+ \ell_{k,k-1}(z) D^{k-1}
						+ \ldots
						+ \ell_{k,0}(z).
	\end{align}
	Then we have
	\begin{align*}
		\ell_{k,0}(z) &= 0, \\
		\ell_{k,1}(z) &= \ell_{k-1,0}(z) - 2 \ell_{k-2,0}(z),\\
		\ell_{k,i}(z) &= \ell_{k-1,i-1}(z) - (i+1) \ell_{k-2,i-1}(z) - z \ell_{k-2,i-2}(z), \qquad 2 \leq i \leq k-1,\\
		\ell_{k,k}(z) &= \ell_{k-1,k-1}(z) - z \ell_{k-2,k-2}(z).
	\end{align*}
	The initial polynomials are $\ell_{0,0}(z) = 1-z$, $\ell_{1,0}(z) = -1$, and $\ell_{1,1}(z) = 1-2z$.
\end{theo}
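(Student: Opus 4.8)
The plan is to expand the recursive definition $L_k = L_{k-1}\cdot D - L_{k-2}\cdot D^2 z$ directly in the monomial basis $\{D^i\}$ and read off the coefficient of each $D^i$. The only genuine difficulty is that $D$ and multiplication by $z$ do not commute, so before anything else I would record the relevant commutation identity. Starting from $Dz = zD + 1$ (which is just the product rule $\frac{d}{dz}(zf) = zf' + f$), Leibniz's rule gives $D^j z = z D^j + j D^{j-1}$, and in particular $D^2 z = z D^2 + 2D$. These are the only algebraic facts needed.

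Next I would treat the two summands of $L_k$ separately. Writing $L_{k-1} = \sum_i \ell_{k-1,i}(z) D^i$, the first term is a pure index shift: $L_{k-1}\cdot D = \sum_i \ell_{k-1,i}(z) D^{i+1}$, so its contribution to the coefficient of $D^i$ in $L_k$ is $\ell_{k-1,i-1}(z)$. For the second term, I would first substitute $D^2 z = z D^2 + 2D$ to get $L_{k-2}\cdot D^2 z = L_{k-2}\cdot z\cdot D^2 + 2\,L_{k-2}\cdot D$, and then push the factor $z$ to the left of each $D^j$ appearing in $L_{k-2}$ using $D^j z = z D^j + j D^{j-1}$. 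Collecting powers of $D$, the coefficient of $D^i$ in $L_{k-2}\cdot D^2 z$ comes out as $z\,\ell_{k-2,i-2}(z) + (i-1)\ell_{k-2,i-1}(z) + 2\,\ell_{k-2,i-1}(z) = z\,\ell_{k-2,i-2}(z) + (i+1)\ell_{k-2,i-1}(z)$.

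Subtracting, the coefficient of $D^i$ in $L_k$ is
\[
\ell_{k,i}(z) = \ell_{k-1,i-1}(z) - (i+1)\ell_{k-2,i-1}(z) - z\,\ell_{k-2,i-2}(z),
\]
which is exactly the claimed master recurrence. The four displayed formulas then follow by specialization, using the convention that $\ell_{m,j} = 0$ whenever $j < 0$ or $j > m$ (the latter because $L_m$ has order $m$, which I would confirm by an easy induction tracking the leading term). Indeed, $i=0$ annihilates all three terms, giving $\ell_{k,0} = 0$; $i=1$ drops the last term since $\ell_{k-2,-1} = 0$; and $i=k$ drops the middle term since $\ell_{k-2,k-1} = 0$, leaving $\ell_{k,k} = \ell_{k-1,k-1} - z\,\ell_{k-2,k-2}$. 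The stated initial polynomials $\ell_{0,0} = 1-z$, $\ell_{1,0} = -1$, $\ell_{1,1} = 1-2z$ are read off directly from $L_0 = (1-z)\cdot 1$ and $L_1 = (1-2z)D - 1$.

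The main obstacle is purely bookkeeping: getting the coefficient $(i+1)$ right. It is the sum of the $2$ coming from the $2D$ piece of $D^2 z = zD^2 + 2D$ and the $(i-1)$ coming from the lower-order commutator term $jD^{j-1}$ in $D^j z = zD^j + jD^{j-1}$ (with $j = i-1$ after the two index shifts). I expect keeping these shifts and the index alignment consistent to be where the care is needed, but there is no conceptual hurdle once the commutation identities are fixed.
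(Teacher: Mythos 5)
Your proposal is correct and follows essentially the same route as the paper, which simply states that the shape of $L_k$ follows by induction from its recursive definition and that the recurrences are obtained by comparing coefficients; your computation with $D^j z = zD^j + jD^{j-1}$ is precisely the detail the paper leaves implicit, and your coefficient $(i+1) = 2 + (i-1)$ checks out. The only point worth making explicit is that $\ell_{k,k}\neq 0$ (so that $L_k$ has order exactly $k$), which follows from $\ell_{k,k}(0)=\ell_{k-1,k-1}(0)=\cdots=1$ by the same recurrence.
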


\begin{proof}
	The initial polynomials are given by Theorem~\ref{theo:diffoprelaxed}. The shape
\eqref{operater_L_k} of the operator $L_k$ follows by induction using its recursive definition. 
Using an ansatz and comparing coefficients gives the recurrence relations for $\ell_{k,i}(z)$.
\end{proof}

The asymptotic behavior (according to $n$)
of the number $\rgf_{k,n}$ of relaxed trees with right height at most $k$
is governed by these differential equations.
These differential equations belong to a known class~\cite[Chapter VII.9]{flaj09}. Consider an ordinary generating function of the kind
\begin{align}
	\label{eq:difeqmerogen}
	D^r Y(z) + a_1(z) D^{r-1} Y(z) + \cdots + a_r(z) Y(z) = 0,
\end{align}
where the $a_i \equiv a_i(z)$ are meromorphic in a simply connected domain $\Omega$. Given a
meromorphic function $f(z)$, let $\omega_{\zeta}(f)$ be the order of the pole of $f$ at $\zeta$, and $\omega_\zeta(f)=0$ meaning that $f(z)$ is analytic at $\zeta$. 

\begin{definition}[Regular singularity, {\cite[p.~519]{flaj09}}]
	\label{def:regularsing}
	The differential equation~\eqref{eq:difeqmerogen} is said to have a singularity at $\zeta$ if at least one of the $\omega_\zeta(a_i)$ is positive. The point $\zeta$ is said to be a \emph{regular singularity} if 
	\begin{align*}
		\omega_\zeta(a_1) \leq 1, \qquad \omega_\zeta(a_2) \leq 2, \qquad \ldots, \qquad \omega_\zeta(a_r) \leq r,
	\end{align*}
	and an \emph{irregular singularity} otherwise.
\end{definition}

\begin{definition}[Indicial polynomial, {\cite[p.~520]{flaj09}}]
	\label{def:indicialpoly}
	Given an equation of the form~\eqref{eq:difeqmerogen} and a regular singular point $\zeta$, the \emph{indicial polynomial} $I(\alpha)$ at $\zeta$ is defined as
	\begin{align*}
		I(\alpha) &= \alpha^{\underline{r}} + \delta_1 \alpha^{\underline{r-1}} + \cdots + \delta_r, & \text{ where } &&
		\alpha^{\underline{\ell}} &:= \alpha(\alpha-1)\cdots(\alpha-\ell+1)
	\end{align*}
	and $\delta_i := \lim_{z \to \zeta} (z-\zeta)^i a_i(z)$. The \emph{indicial equation at $\zeta$} is the algebraic equation $I(\alpha) = 0$.
\end{definition}

The following technical lemma will be needed to derive the asymptotics for the solutions of the special type of differential equations given in Theorem~\ref{theo:odeasymptregsing}. 

\def\ff#1#2{#1^{\underline{#2}}}
\let\set\mathbb

\begin{lemma}
\label{lem:structuresimplefactor}
Let $p_0,\dots,p_r\in\set C[x]$ and consider the differential operator
\[
 L = p_r D^r + \cdots + p_1 D + p_0.
\]
Suppose that $x$ is a simple factor of~$p_r$, and suppose
that for some $\alpha\in\set C$, a solution of $L f(x) = 0$ admits a generalized
series solution $f(x)=\sum_{n\in\alpha+\set Z} c_n x^n$.
Then the coefficient sequence $(c_n)_{n\in\alpha+\set Z}$ satisfies a recurrence of the form 
\begin{center}
\begin{tabular}{rll}
$\Bigl(([x^1]p_r)(n-r+1)+  ([x^0]p_{r-1})\Bigr)$ & $\ff{n}{r-1} \; c_{n}$ & \\
  $+ \quad[\cdots]$ & $\ff{(n-1)}{r-2} \; c_{n-1}$ & \\
  $+ \quad[\cdots]$ & $\ff{(n-2)}{r-3} \; c_{n-2}$ & \\
  $+ \quad \dots~$ & & \\
  $+ \quad[\cdots]$ & $c_{n-s}$ & $= \quad 0,$
\end{tabular}
\end{center}
where $[\cdots]$ are certain polynomials in~$n$ and $s$ is some fixed nonnegative integer. 
\end{lemma}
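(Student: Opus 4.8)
The plan is to substitute the generalized series $f(x) = \sum_{n \in \alpha + \set Z} c_n x^n$ directly into the equation $L f(x) = 0$ and read off the recurrence by comparing coefficients of powers of $x$. Writing each polynomial as $p_i(x) = \sum_{m \ge 0} p_{i,m} x^m$ with $p_{i,m} = [x^m] p_i$, and using $D^i x^n = \ff{n}{i}\, x^{n-i}$, I would obtain
\[
 L f(x) = \sum_{i=0}^r \sum_{m \ge 0} \sum_{n} p_{i,m}\, \ff{n}{i}\, c_n\, x^{n-i+m}.
\]
Since $f$ is a solution, the coefficient of every power of $x$ must vanish; collecting the terms with $n-i+m = N$ gives, for every $N \in \alpha + \set Z$,
\[
 \sum_{i=0}^r \sum_{m \ge 0} p_{i,m}\, \ff{(N+i-m)}{i}\, c_{N+i-m} = 0.
\]
This is the raw recurrence, and the remaining work is purely to rewrite it in the displayed, reindexed form.

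The crucial observation is that the naive leading term of this relation drops out. The highest-index coefficient $c_{N+r}$ arises only from $(i,m) = (r,0)$, with coefficient $p_{r,0}\,\ff{(N+r)}{r}$; but $p_{r,0} = p_r(0) = 0$ because $x$ divides $p_r$, so this term vanishes. Hence the genuine leading term is $c_{N+r-1}$, which receives contributions exactly from $(i,m) = (r,1)$ and $(i,m) = (r-1,0)$. Its coefficient is
\[
 p_{r,1}\,\ff{(N+r-1)}{r} + p_{r-1,0}\,\ff{(N+r-1)}{r-1} = \ff{(N+r-1)}{r-1}\bigl(p_{r,1}\,N + p_{r-1,0}\bigr),
\]
using the identity $\ff{(N+r-1)}{r} = \ff{(N+r-1)}{r-1}\cdot N$. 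Setting $n := N+r-1$ turns this into $\ff{n}{r-1}\bigl(([x^1]p_r)(n-r+1) + [x^0]p_{r-1}\bigr)$, which is precisely the coefficient of $c_n$ claimed. Here the hypothesis that $x$ is a \emph{simple} factor enters: it guarantees $[x^1]p_r \ne 0$, so this bracket is genuinely a degree-one polynomial in $n$, which is what the later indicial analysis of Theorem~\ref{theo:odeasymptregsing} will need.

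For the lower-order terms I would fix $j \ge 1$ and collect all pairs $(i,m)$ contributing to $c_{n-j}$, namely those with $r-1-i+m = j$. Since $m \ge 0$ forces $i \ge r-1-j$, each contributing falling factorial $\ff{(n-j)}{i}$ is divisible by $\ff{(n-j)}{r-1-j}$ (interpreted as $1$ once $j \ge r-1$), so the entire coefficient of $c_{n-j}$ factors as $\ff{(n-j)}{r-1-j}$ times a polynomial in $n$, matching the shape of the display. Finiteness of the recurrence follows because each $p_i$ is a polynomial: the bound $m \le \deg p_i$ forces $j \le s$ with $s := r-1 + \max_{0 \le i \le r}(\deg p_i - i)$, a fixed nonnegative integer, so no coefficient below $c_{n-s}$ occurs. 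The argument is essentially mechanical once these points are in place; the only step requiring genuine care is the falling-factorial divisibility, where one must notice that it is the constraint $m \ge 0$ (rather than the degrees of the $p_i$) that forces $i \ge r-1-j$ and thereby produces the uniform factor $\ff{(n-j)}{r-1-j}$.
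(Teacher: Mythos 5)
Your proposal is correct and follows essentially the same route as the paper's proof: substitute the generalized series, compare coefficients of $x^N$, observe that $p_{r,0}=0$ kills the would-be term $c_{N+r}$ while $p_{r,1}\neq 0$ makes $c_{N+r-1}$ the genuine leading term with coefficient $\ff{(N+r-1)}{r-1}\bigl(p_{r,1}N+p_{r-1,0}\bigr)$, and use the constraint $m\ge 0$ to extract the uniform falling-factorial factor $\ff{(n-j)}{r-1-j}$ from the coefficient of $c_{n-j}$. The only differences are notational (your index $m$ versus the paper's $j$, and your explicit bound for $s$), so nothing further is needed.
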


\begin{proof}
We have $x^j D^i f = \sum_{n\in\alpha+\set Z} c_n \ff ni x^{n-i+j} = \sum_{n\in\alpha+\set Z} c_{n+i-j} \ff{(n+i-j)}i x^n$
for all $i, j\in\set N$.\\
Write $p_i = \sum_j p_{i,j}x^j$ for $i=0,\dots,r$, in the understanding that $j$ runs through all integers,
but $p_{i,j}$ is zero for all negative and almost all positive indices~$j$. By assumption, we know
that $p_{r,0}=0\neq p_{r,1}$. 

It follows that $p_i D^i f = \sum_{n\in\alpha+\set Z} \sum_j p_{i,j} c_{n+i-j} \ff{(n+i-j)}i x^n$ for $i=0,\dots,r$. Then
\[
 L f = \sum_{n\in\alpha+\set Z} \sum_{i=0}^r\sum_j p_{i,j} c_{n+i-j} \ff{(n+i-j)}i x^n = 0
\]
implies, by comparing the coefficients of~$x^n$,
\begin{equation}\label{eq:de2re}
0 = \sum_{i=0}^r\sum_j p_{i,j} c_{n+i-j} \ff{(n+i-j)}i
= \sum_j \sum_{i=0}^r p_{i,i+j}\ff{(n-j)}i c_{n-j} 
\end{equation}
for all $n\in\alpha+\set Z$. 

Consider a fixed $j\in\set Z$. From the definition $\ff{(n-j)}i=(n-j)(n-j-1)\cdots(n-j-i+1)$ it follows
that $\ff{(n-j)}i\mid\ff{(n-j)}{i+1}$ for every $i\in\set N$.
Therefore, if $k$ is minimal such that $p_{k,k+j}\neq0$, then $\ff{(n-j)}k\mid \sum_{i=0}^r p_{i,i+j}\ff{(n-j)}i$.

Note also that for each fixed $j$, the polynomial $\sum_{i=0}^r p_{i,i+j}\ff{(n-j)}i$ is non-zero if and only if
at least one of the coefficients $p_{i,i+j}$ are non-zero, because the falling factorials form a basis of the vector space
of polynomials.

For $j<-r$, we have $i+j<0$ for all $i=0,\dots,r$, and therefore $p_{i,i+j}=0$ for all $i$ and $\sum_{i=0}^r p_{i,i+j}\ff{(n-j)}i=0$.
Therefore there are no terms $c_{n-j}$ with $j<-r$ present in equation~\eqref{eq:de2re}.

For $j=-r$, we have $i+j<0$ for all $i=0,\dots,r-1$, and therefore $p_{i,i+j}=0$ for all these~$i$.
In addition, we have $p_{r,r-r}=p_{r,0}=0$ by assumption, so again $\sum_{i=0}^r p_{i,i+j}\ff{(n-j)}i=0$,
and no term $c_{n-j}$ with $j=-r$ is present in equation~\eqref{eq:de2re}.

Next, for $j=-r+1$ we have $p_{r,r+(-r+1)}=p_{r,1}\neq0$ by assumption, so the term $c_{n-(r-1)}$ \emph{does}
occur in equation~\eqref{eq:de2re}.
Moreover, since $p_{i,i+(-r+1)}=0$ for all $i<r-1$, we have $\sum_{i=0}^r p_{i,i+j}\ff{(n-j)}i=p_{r,1}\ff{(n-j)}r +
p_{r-1,0}\ff{(n-j)}{r-1}=(p_{r,1}n + p_{r-1,0})\ff{(n+r-1)}{r-1}$.

In general, for any $j>-r+1$, we have $p_{i,i+j}=0$ for all $i<-j$ and therefore $\ff{(n-j)}{-j}\mid\sum_{i=0}^r p_{i,i+j}\ff{(n-j)}i$.
(The understanding here is that $\ff{(n-j)}{-j}=1$ if $-j$ is not positive.)
Substituting $n-r+1$ for~$n$, we have shown the stated form of the recurrence. 
\end{proof}

If $\zeta$ is a regular singularity of a differential equation, then all solutions of the differential equation behave for $z\to\zeta$ like $(z-\zeta)^\alpha\log(z-\zeta)^\beta$ for some $\alpha\in\C,\beta\in\N$. 
The exponents $\alpha$ are roots of the indicial polynomial, and the exponents of the logarithmic terms are related to multiple roots of the indicial polynomial and roots at integer distances. More precisely, in our case the following theorem will be applicable. It is a variant of \cite[Theorem~VII.9]{flaj09} which works due to $\omega_{\zeta}(a_i) = 1$ for all $i=1,\ldots,r$.

\begin{theo}
	\label{theo:odeasymptregsing}
	Consider the differential equation~\eqref{eq:difeqmerogen} and a
	regular singular point $\zeta$ such that $\omega_{\zeta}(a_i) \le 1$
	for all $i = 1,\ldots,r$, and $\delta_1 := \lim_{z \to \zeta} (z-\zeta) a_1(z) \geq 0$. 
	Then, the vector space of all solutions defined in a slit neighborhood
	of $\zeta$ has a basis of $r$ functions, where $r-1$ functions are of the form
	\begin{align*}
		&(z - \zeta)^{m} H_m(z-\zeta), & m&=0,1,\ldots,r-2,
	\end{align*}
	with functions $H_m$ being analytic at $0$ and satisfying $H_m(0) \neq 0$. 
	The $r$-th basis function depends on $\delta_1$: 
	\begin{enumerate}
		\item For $\delta_1 \in \{0,1,\ldots,r-1\}$ it is of the form
		\begin{align*}
			(z - \zeta)^{r-1-\delta_1} H(z-\zeta) \log(z-\zeta);
		\end{align*}
		\item For $\delta_1 \in \{r,r+1,\ldots\}$ it is of the form
		\begin{align*}
			(z - \zeta)^{r-1-\delta_1} H(z-\zeta) + H_0(z-\zeta)\left(\log(z-\zeta)\right)^k, \quad \text{ with } \quad k \in \{0,1\};
		\end{align*}
		\item For $\delta_1 \not\in \Z$ it is of the form
		\begin{align*}
			(z - \zeta)^{r-1-\delta_1} H(z-\zeta);
		\end{align*}
	\end{enumerate}
		where $H$ is analytic at $0$, with $H(0) \neq 0$.
\end{theo}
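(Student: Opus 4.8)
The plan is to prove Theorem~\ref{theo:odeasymptregsing} by adapting the Frobenius method to the special situation where all the $a_i$ have a simple pole at $\zeta$. By a linear change of variable I may assume $\zeta = 0$, so that the equation takes the form $D^r Y + a_1 D^{r-1} Y + \cdots + a_r Y = 0$ with each $a_i$ having at most a simple pole at the origin. The starting point is the classical fact (Frobenius theory, cf.~\cite[Chapter VII.9]{flaj09}) that at a regular singular point every solution is a combination of terms $(z-\zeta)^\alpha \log(z-\zeta)^\beta H(z-\zeta)$ with $H$ analytic and $H(0)\neq 0$, where the admissible exponents $\alpha$ are roots of the indicial polynomial $I(\alpha)$. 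So the whole theorem reduces to computing the indicial polynomial explicitly under the hypothesis $\omega_\zeta(a_i)=1$ and reading off the exponents together with the logarithmic contributions coming from repeated roots or roots differing by integers.

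First I would compute the indicial polynomial. Because $\omega_\zeta(a_i)=1$ exactly (not smaller), the numbers $\delta_i = \lim_{z\to\zeta}(z-\zeta)a_i(z)$ satisfy $\delta_i \neq 0$ only when the pole is genuinely simple; in particular $\delta_2 = \cdots = \delta_r = 0$ whenever the corresponding $a_i$ contributes nothing to the leading falling-factorial balance, so the dominant behavior is controlled by $\delta_1$. Concretely, multiplying the equation by $(z-\zeta)^r$ and extracting the lowest-order term in the Frobenius ansatz $Y=(z-\zeta)^\alpha(1+\cdots)$, the leading balance gives
\begin{align*}
	I(\alpha) &= \alpha^{\underline{r}} + \delta_1 \alpha^{\underline{r-1}} = \alpha^{\underline{r-1}}\bigl(\alpha - (r-1) + \delta_1\bigr).
\end{align*}
Thus $I(\alpha)=0$ has the $r-1$ roots $\alpha = 0,1,\ldots,r-2$ coming from $\alpha^{\underline{r-1}}$, together with the single extra root $\alpha = r-1-\delta_1$. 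This factorization is the crux of the whole argument: the $r-1$ ``trivial'' roots produce the analytic-times-power basis functions $(z-\zeta)^m H_m(z-\zeta)$ for $m=0,\ldots,r-2$, while the $r$-th root $r-1-\delta_1$ produces the exceptional basis function whose precise shape depends on how $r-1-\delta_1$ sits relative to the integer roots.

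Next I would do the case analysis for the $r$-th basis function according to $\delta_1$, which is exactly where the logarithms enter. When $\delta_1\notin\Z$, the exponent $r-1-\delta_1$ is distinct modulo $\Z$ from all the integer roots, so Frobenius theory yields a clean power-series solution $(z-\zeta)^{r-1-\delta_1}H(z-\zeta)$ with no logarithm, giving case (3). When $\delta_1\in\{0,1,\ldots,r-1\}$, the extra root $r-1-\delta_1$ coincides with one of the integer roots $0,\ldots,r-1$, forcing a logarithmic term and producing case (1), where the power $(z-\zeta)^{r-1-\delta_1}$ is precisely the lower of the two coincident exponents. The remaining case $\delta_1\ge r$ puts $r-1-\delta_1 \le -1$ below all the integer roots, and whether a logarithm appears is governed by the resonance condition (integer difference between exponents); this accounts for the ambiguous $k\in\{0,1\}$ in case (2). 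The main obstacle here is to justify carefully that no logarithms sneak into the $r-1$ functions $(z-\zeta)^m H_m$ of cases $m=0,\ldots,r-2$; for this I would invoke Lemma~\ref{lem:structuresimplefactor}, whose conclusion that the coefficient recurrence has the stated triangular form with leading factor $\bigl(([x^1]p_r)(n-r+1)+([x^0]p_{r-1})\bigr)\ff{n}{r-1}$ shows that the recurrence can be solved at each integer exponent without the recurrence degenerating, so that the analytic series $H_m$ genuinely exist with $H_m(0)\neq 0$. The one delicate point is the sign hypothesis $\delta_1\ge 0$, which guarantees $r-1-\delta_1 \le r-1$ and thereby fixes the ordering of the exceptional exponent relative to the integer block, eliminating spurious resonances and pinning down which of the three regimes applies.
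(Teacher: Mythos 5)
Your proposal is correct and follows essentially the same route as the paper: both compute the indicial polynomial as $\alpha^{\underline{r-1}}(\alpha-r+1+\delta_1)$ using the simple-pole hypothesis (which forces $\delta_i=0$ for $i\geq 2$), handle the consecutive block of integer roots $0,\dots,r-2$ via Lemma~\ref{lem:structuresimplefactor} to rule out logarithms there, and then split into the same three cases according to how the exceptional root $r-1-\delta_1$ sits relative to that block. The only cosmetic slip is your definition of $\delta_i$ as $\lim_{z\to\zeta}(z-\zeta)a_i(z)$ for all $i$, whereas the correct (and paper's) definition uses $(z-\zeta)^i$; your conclusion $\delta_2=\cdots=\delta_r=0$ is nevertheless right for exactly that reason.
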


\begin{proof}
	Due to $\omega_{\zeta}(a_i) \le 1$ we get by the definition of the
	indicial polynomial that $\delta_i=0$ for $i \geq 2$. Hence, it is given by
	\begin{align*}
		I(\alpha) &= \alpha^{\underline{r}} + \delta_1 \alpha^{\underline{r-1}} = \alpha^{\underline{r-1}} (\alpha-r+1+\delta_1).
	\end{align*}
	Therefore, the roots are $0,1,\ldots,r-2$, and $r-1-\delta_1$.
	
	Let us treat the consecutive range of roots $0,1,\ldots,r-2$ first. 
	Consider the equivalent recurrence relation for the coefficients
	$(c_n)_{n\in \N}$ of the series solution expanded at $\zeta$. It has the form
	\begin{align*}
		I(n) y_n = \Phi(y_{n-1},\ldots,y_{n-N}),
	\end{align*} 
	where $I(n)$ is the indicial polynomial, $N=\max_i(r-i+\deg(a_i(z)))$,
	and $\Phi$ is a linear operator with polynomial coefficients in $n$.
	Let $\alpha$ be a root of the indicial polynomial, and consider the sequence
	$(c_n)_{n\in \N}$ extended to $\Z$ with $c_n = 0$ for $n<\alpha$ for $\alpha = 0,1,\ldots,r-2$.
	At $n=\alpha$ we have
	\begin{align}
		\label{eq:birthcoeff}
		0 \cdot y_{\alpha} = \Phi(0,\ldots,0).
	\end{align} 
	Hence, $y_{\alpha}$ can be chosen arbitrarily. By Lemma~\ref{lem:structuresimplefactor},
for each choice the recurrence uniquely extends the sequence towards $+\infty$. Therefore, each
root $\alpha$ gives rise to a different solution of our recurrence relation. The set of all these
solutions is linearly independent.
	The consecutive range of zeros implies that the  values $y_{0},\ldots,y_{r-2}$ can be chosen arbitrarily, as they do not interfere with each other. Such a situation does not give rise to any logarithmic terms.
	
	Next, let us treat the remaining basis solution associated to $r-1-\delta_1$.
	
	In the first case, there is a multiple root of order $2$. Then, the classical theory of linear differential equations implies the appearance of logarithmic terms, see \cite{henr77,waso87,ince44,SchlesingerI1968}.

	In the second case, the situation is analogous to~\eqref{eq:birthcoeff}: The solution starts to exist at $n=r-1-\delta_1$. But this solution then needs to be continued further, and at $n=0$, we might have a problem. Then, there could emerge a logarithmic term or not.
	This depends on the specific problem.
	If the solution cannot be extended, then a logarithmic factor multiplied
	with the solution at $n=0$ is added, see \cite{ince44}. 
	
	In the third case, the root does not interfere with the other solutions,
	as the difference with any other root is not an integer.
	Thus, it can be continued without problems, and has the claimed form.
\end{proof}

By Theorem~\ref{theo:diffoprelaxed} the differential equations associated to relaxed trees
are of the kind~\eqref{eq:difeqmerogen}. 
The roots of the leading term are under these conditions
responsible for the singularities of the solutions. 
The dominant one is as usual the one closest to the origin.
Our first aim is to show that for every bounded right height
there exists a unique dominant singularity.
For this purpose we start with the analysis of the polynomials $\ell_{k,i}(z)$.
They are strongly connected to a famous family of polynomials:
the \emph{Chebyshev polynomials}, see, e.g.,~\cite[Chapter~18]{NIST:DLMF} or \cite[Chapter~22]{AbramowitzStegun1964}.

\begin{definition}[Chebyshev polynomials]
	The \emph{Chebyshev polynomials of the first kind} $T_n(z)$ are defined by the recurrence relation
	\begin{align*}
		T_0(z) &= 1,\\
		T_1(z) &= z, \\
		T_{n+2}(z) &= 2z T_{n+1}(z) - T_{n}(z).
	\end{align*}
	The \emph{Chebyshev polynomials of the second kind} $U_n(z)$ are defined by the recurrence relation
	\begin{align*}
		U_0(z) &= 1,\\
		U_1(z) &= 2z, \\
		U_{n+2}(z) &= 2z U_{n+1}(z) - U_{n}(z).
	\end{align*}
\end{definition}

\begin{lemma}[Transformed leading coefficient]
	\label{lem:chebyshev}
	Let $\ell_{k,i}(z)$ be the coefficients of the operator $L_k$ from Theorem~\ref{theo:Dkprop}.
	Then, for the leading coefficient we get
	\begin{align*}
		\ell_{k,k}(z) &= z^{\frac{k+2}{2}} U_{k+2}\left(\frac{1}{2 \sqrt{z}}\right) 
		               = \sum_{n = 0}^{\lfloor \frac{k+2}{2} \rfloor} (-1)^n \binom{k+2-n}{n} z^n,
	\end{align*}
	where $U_k(z)$ are the Chebyshev polynomials of the second kind.
\end{lemma}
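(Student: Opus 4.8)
The plan is to establish both equalities by exploiting the recursive structure of $\ell_{k,k}(z)$ from Theorem~\ref{theo:Dkprop} and matching it to the defining recurrence of the Chebyshev polynomials of the second kind. From Theorem~\ref{theo:Dkprop} we have the clean recurrence $\ell_{k,k}(z) = \ell_{k-1,k-1}(z) - z\,\ell_{k-2,k-2}(z)$ for the leading coefficient alone, which decouples from all the other $\ell_{k,i}$. To shorten notation I would write $L_k(z) := \ell_{k,k}(z)$, so that
\begin{align*}
	L_k(z) &= L_{k-1}(z) - z\,L_{k-2}(z), \qquad k \geq 2,
\end{align*}
with initial data read off from the initial polynomials: $L_0(z) = \ell_{0,0}(z) = 1-z$ and $L_1(z) = \ell_{1,1}(z) = 1-2z$. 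The entire lemma then reduces to solving this single linear recurrence with polynomial coefficients and identifying the solution with the two claimed closed forms.

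First I would prove the Chebyshev identity $\ell_{k,k}(z) = z^{(k+2)/2}\,U_{k+2}\!\left(\tfrac{1}{2\sqrt{z}}\right)$. The natural route is to substitute $z = 1/(4w^2)$ (equivalently set $w = 1/(2\sqrt{z})$) so that the factor $z^{(k+2)/2}$ becomes $(2w)^{-(k+2)}$, and to check that the sequence $V_k(z) := z^{(k+2)/2}\,U_{k+2}(w)$ obeys exactly the recurrence $V_k = V_{k-1} - z\,V_{k-2}$. Indeed, from the Chebyshev recurrence $U_{k+2}(w) = 2w\,U_{k+1}(w) - U_{k}(w)$, multiplying through by $z^{(k+2)/2}$ and using $2w\sqrt{z} = 1$ turns $2w$ into $1/\sqrt{z}$; this produces $z^{(k+2)/2}U_{k+2} = z^{(k+1)/2}U_{k+1} - z\cdot z^{k/2}U_{k}$, which is precisely $V_k = V_{k-1} - z\,V_{k-2}$. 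Since two linear recurrences of the same shape with matching initial conditions have identical solutions, it then suffices to verify the base cases: $U_2(w) = 4w^2-1$ gives $z\,U_2(1/(2\sqrt z)) = z(1/z - 1) = 1-z = L_0$, and $U_3(w) = 8w^3-4w$ gives $z^{3/2}U_3(1/(2\sqrt z)) = z^{3/2}(1/z^{3/2} - 2/\sqrt z) = 1-2z = L_1$, so the identification holds for all $k$ by induction.

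For the explicit sum $\sum_{n=0}^{\lfloor (k+2)/2\rfloor}(-1)^n\binom{k+2-n}{n}z^n$ I would argue that the right-hand side satisfies the same recurrence and initial conditions. This is the one genuinely combinatorial step: setting $b_{k,n} := (-1)^n\binom{k+2-n}{n}$, the recurrence $L_k = L_{k-1} - z\,L_{k-2}$ translates coefficientwise into $b_{k,n} = b_{k-1,n} - b_{k-2,n-1}$, i.e.\ $(-1)^n\binom{k+2-n}{n} = (-1)^n\binom{k+1-n}{n} - (-1)^{n-1}\binom{k+1-n}{n-1}$, which after cancelling the sign $(-1)^n$ is exactly Pascal's rule $\binom{k+2-n}{n} = \binom{k+1-n}{n} + \binom{k+1-n}{n-1}$. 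One must only double-check that the summation bounds behave correctly at the boundary (the top term and the constant term), which is a routine verification. Alternatively, this explicit coefficient formula is a standard one for $U_{k+2}$ and could be cited, but reproving it via Pascal's rule keeps the argument self-contained.

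I do not expect a serious obstacle here; the main care required is bookkeeping in the substitution $w = 1/(2\sqrt{z})$, since the half-integer powers of $z$ and the branch of $\sqrt{z}$ must be tracked consistently so that the $2w$ factor cleanly becomes $1/\sqrt{z}$, and one must confirm that the apparently fractional powers $z^{(k+2)/2}U_{k+2}(1/(2\sqrt z))$ are in fact polynomials in $z$ (which they are, because $U_{k+2}$ is even or odd according to the parity of $k$, so only integer powers of $z$ survive). Handling the parity correctly in the base cases and in the summation limit $\lfloor(k+2)/2\rfloor$ is the only place where an off-by-one slip could occur.
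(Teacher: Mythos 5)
Your proposal is correct and follows essentially the same route as the paper: isolate the decoupled recurrence $\ell_{k,k}=\ell_{k-1,k-1}-z\,\ell_{k-2,k-2}$, apply the substitution $z\mapsto 1/(4w^2)$ together with the factor $(2w)^{k+2}$ to recognize the Chebyshev recurrence with initial data $U_2$ and $U_3$, and then read off the explicit sum from the standard coefficient formula for $U_m$. Your additional verification of that coefficient formula via Pascal's rule is a harmless (and correct) elaboration of a step the paper simply cites.
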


\begin{proof}
		We start with the recurrence relation of $\ell_{k,k}(z)$ from Theorem~\ref{theo:Dkprop}.
		Replacing $z$ by $\frac{1}{4z^2}$ and multiplying by $(2z)^{k+2}$, we get
		\begin{align*}
			(2z)^{k+2} \ell_{k,k}\left(\frac{1}{4z^2}\right) = 
				2z \cdot (2z)^{k+1} \ell_{k-1,k-1}\left(\frac{1}{4z^2}\right)
				- (2z)^{k} \ell_{k-2,k-2}\left(\frac{1}{4z^2}\right),
		\end{align*}
		and we recognize the recurrence relation for the Chebyshev polynomials of the second kind for $(2z)^{k} \ell_{k-2,k-2}\left(\frac{1}{4z^2}\right)$,  see~\cite[Section~18.9]{NIST:DLMF}.
		Transforming the initial conditions, gives $U_2(z)$ and $U_3(z)$ respectively. 
		
		The closed form is derived from the well-known formula
		$U_k(z) = \sum_{n = 0}^{\lfloor \frac{k}{2} \rfloor} (-1)^n \binom{k-n}{n}
(2z)^{k-2n}$.
\end{proof}

We will also need the following result on $\ell_{k,k-1}(z)$. Its structure is directly related to the one of $\ell_{k,k}(z)$.

\begin{lemma}[Transformed $\ell_{k,k-1}(z)$]
	\label{lem:lkk1}
	For the coefficient $\ell_{k,k-1}(z)$ of the operator $L_k$ from Theorem~\ref{theo:Dkprop},
	we get
	\begin{align*}
		\ell_{k,k-1}(z) &= \frac{k}{2} \ell'_{k,k}(z), 
	\end{align*}
	for $k \geq 1$.
\end{lemma}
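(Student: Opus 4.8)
The plan is to prove the polynomial identity $\ell_{k,k-1}(z) = \frac{k}{2}\ell'_{k,k}(z)$ by induction on $k$. Since both sides are polynomials, it suffices to establish equality as polynomials, and then the asserted restriction $|z|<\rho_k$ is automatic. For the base cases I would simply read off the operators computed explicitly just after Theorem~\ref{theo:diffoprelaxed} (and in Theorems~\ref{theo:R1} and~\ref{theo:R2}): for $k=1$ we have $\ell_{1,0}(z)=-1$ and $\ell_{1,1}(z)=1-2z$, so $\tfrac12\ell'_{1,1}(z)=-1$; for $k=2$ we have $\ell_{2,1}(z)=2z-3$ and $\ell_{2,2}(z)=z^2-3z+1$, so $\ell'_{2,2}(z)=2z-3$. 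Both agree.

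For the inductive step with $k\ge 3$, I would combine two recurrences from Theorem~\ref{theo:Dkprop}. Taking $i=k-1$ (legitimate since $2\le k-1$) gives
\begin{align*}
    \ell_{k,k-1}(z) &= \ell_{k-1,k-2}(z) - k\,\ell_{k-2,k-2}(z) - z\,\ell_{k-2,k-3}(z),
\end{align*}
while differentiating the leading-coefficient recurrence $\ell_{k,k}=\ell_{k-1,k-1}-z\,\ell_{k-2,k-2}$ yields
\begin{align*}
    \tfrac{k}{2}\ell'_{k,k}(z) &= \tfrac{k}{2}\ell'_{k-1,k-1}(z) - \tfrac{k}{2}\ell_{k-2,k-2}(z) - \tfrac{k}{2}z\,\ell'_{k-2,k-2}(z).
\end{align*}
Applying the induction hypotheses $\ell_{k-1,k-2}=\tfrac{k-1}{2}\ell'_{k-1,k-1}$ and $\ell_{k-2,k-3}=\tfrac{k-2}{2}\ell'_{k-2,k-2}$ and subtracting, every term cancels except a fixed residue, so the claim reduces to the single auxiliary identity
\begin{align*}
    z\,\ell'_{k-2,k-2}(z) - \tfrac{k}{2}\ell_{k-2,k-2}(z) = \tfrac{1}{2}\ell'_{k-1,k-1}(z).
\end{align*}

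The main obstacle is that this auxiliary identity does \emph{not} follow from the recurrences of Theorem~\ref{theo:Dkprop} alone, so the crux of the proof is to verify it using the closed form of the leading coefficients from Lemma~\ref{lem:chebyshev}. Substituting $\ell_{j,j}(z)=\sum_{n}(-1)^n\binom{j+2-n}{n}z^n$ for $j=k-1$ and $j=k-2$ and comparing the coefficient of $z^n$ on both sides, the identity becomes
\begin{align*}
    \tfrac{1}{2}(n+1)\binom{k-n}{n+1} + \Bigl(n-\tfrac{k}{2}\Bigr)\binom{k-n}{n} = 0,
\end{align*}
which collapses to $0$ after rewriting $\binom{k-n}{n+1}=\binom{k-n}{n}\frac{k-2n}{n+1}$. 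This closes the induction. I would present only this last coefficient computation as a genuine calculation; everything else is bookkeeping of the two recurrences together with the induction hypothesis.
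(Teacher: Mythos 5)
Your proof is correct and follows essentially the same route as the paper: induction on $k$ via the recurrences of Theorem~\ref{theo:Dkprop}, reducing to a residual identity among the leading coefficients that is then checked against the explicit closed form of Lemma~\ref{lem:chebyshev}. Your auxiliary identity $z\ell'_{k-2,k-2}(z)-\tfrac{k}{2}\ell_{k-2,k-2}(z)=\tfrac{1}{2}\ell'_{k-1,k-1}(z)$ is the correct form of that residue (the paper's displayed version, $\ell_{k-1,k-1}-2z\ell'_{k-2,k-2}+k\ell_{k-2,k-2}=0$, is missing a prime on the first term, as one sees already for $k=3$), and your explicit binomial-coefficient verification of it checks out.
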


\begin{proof}
  By Theorem~\ref{theo:Dkprop} the claim holds for $k =1$ and $k=2$. We proceed by induction. Assume the claim holds for $1 \leq i \leq k$. Then, differentiating both sides of the defining equation of $\ell_{k,k}(z)$ given in Theorem~\ref{theo:Dkprop} we get
	\begin{align*}
		\ell'_{k,k}(z) &= \ell'_{k-1,k-1}(z) - z \ell'_{k-2,k-2}(z) - \ell_{k-2,k-2}(z). 	
	\end{align*}
	Next, we apply the induction hypothesis and get
	\begin{align*}
		\ell'_{k,k}(z) &= \frac{2}{k-1} \ell_{k-1,k-2}(z) - z \frac{2}{k-2} \ell_{k-2,k-3}(z) - \ell_{k-2,k-2}(z).	
	\end{align*}
	Finally, by rearranging the equation and utilizing the defining recurrence relation for $\ell_{k,k-1}(z)$ we prove (omitting the arguments)
	\begin{align*}
							\ell'_{k,k}	&= \frac{2}{k} \Big(\underbrace{ \ell_{k-1,k-2} - z \ell_{k-2,k-3} - k \ell_{k-2,k-2} }_{= \ell_{k,k-1}} \Big) 
								+ \frac{1}{k} \Big(\underbrace{ \ell_{k-1,k-1} - 2z \ell'_{k-2,k-2} + k \ell_{k-2,k-2} }_{= 0 }\Big),
	\end{align*}
	where the last expression is equal to $0$, as we know the polynomial $\ell_{k,k}(z)$ explicitly from Lemma~\ref{lem:chebyshev}.
\end{proof}

Chebyshev polynomials are well-studied objects.
We summarize some important results (for our analysis) in the following lemma.
\begin{lemma}
	\label{lem:lkkproperties}
	The roots of $\ell_{k,k}(z)$ are real, positive, and distinct.
	Let $\rho_k$ be the smallest real root of $\ell_{k,k}(z)$. Then, $R_k(z)$ is singular at $\rho_k$ and we have
	\begin{align*}
		\rho_k &= \frac{1}{4\cos^2\left(\frac{\pi}{k+3}\right)}.
	\end{align*}
	Furthermore, $\rho_k$ is not a root of $\ell_{k,k-1}(z)$.
\end{lemma}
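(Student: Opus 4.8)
The plan is to leverage the explicit identification of $\ell_{k,k}(z)$ with a Chebyshev polynomial of the second kind from Lemma~\ref{lem:chebyshev}, namely $\ell_{k,k}(z) = z^{\frac{k+2}{2}} U_{k+2}\left(\frac{1}{2\sqrt{z}}\right)$, and to transport the classical root structure of $U_{k+2}$ through the substitution $x = \frac{1}{2\sqrt{z}}$, i.e.~$z = \frac{1}{4x^2}$.

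First I would recall the well-known fact that $U_{k+2}(x)$ has $k+2$ real, simple roots, all lying in the open interval $(-1,1)$ and given explicitly by $x_j = \cos\left(\frac{j\pi}{k+3}\right)$ for $j = 1,\ldots,k+2$. Since $\ell_{k,k}(0) = 1 \neq 0$ (the constant term in the closed form of Lemma~\ref{lem:chebyshev}), a nonzero $z_0$ is a root of $\ell_{k,k}$ if and only if $\frac{1}{2\sqrt{z_0}}$ is a nonzero root of $U_{k+2}$. The roots of $U_{k+2}$ are symmetric about the origin (as $U_{k+2}$ has definite parity), so $x_j$ and $-x_j$ occur in pairs and map to the same value $z_j = \frac{1}{4\cos^2\left(\frac{j\pi}{k+3}\right)}$. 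Collecting these, I obtain that the roots of $\ell_{k,k}$ are exactly $z_j = \frac{1}{4\cos^2\left(\frac{j\pi}{k+3}\right)}$ for $j = 1,\ldots,\lfloor \frac{k+2}{2}\rfloor$, which are manifestly real and positive. When $k$ is odd, the central root $x=0$ of $U_{k+2}$ must be discarded, as it corresponds to $z = \infty$; a degree check against the closed form confirms the count $\lfloor \frac{k+2}{2}\rfloor$.

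For distinctness and for identifying the smallest root, I would use that the angles $\frac{j\pi}{k+3}$ lie in $\left(0,\frac{\pi}{2}\right)$ for the relevant range of $j$ (or, more robustly, the double-angle identity $\cos^2\theta = \frac{1+\cos 2\theta}{2}$ with $\frac{2j\pi}{k+3} \in (0,\pi)$), so that $\cos^2\left(\frac{j\pi}{k+3}\right)$ is strictly decreasing in $j$. This yields distinct roots and shows that the largest value of $\cos^2$, hence the smallest root $\rho_k$, is attained at $j=1$, giving $\rho_k = \frac{1}{4\cos^2\left(\frac{\pi}{k+3}\right)}$.

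The final claim, that $\rho_k$ is not a root of $\ell_{k,k-1}(z)$, falls out immediately from Lemma~\ref{lem:lkk1}: since $\ell_{k,k-1}(z) = \frac{k}{2}\ell'_{k,k}(z)$ and the roots of $\ell_{k,k}$ are simple, $\rho_k$ is a simple root and hence $\ell'_{k,k}(\rho_k) \neq 0$, so $\ell_{k,k-1}(\rho_k) = \frac{k}{2}\ell'_{k,k}(\rho_k) \neq 0$ for $k \geq 1$. The only genuinely delicate point in the whole argument is the bookkeeping of the substitution $z = \frac{1}{4x^2}$---in particular correctly pairing $\pm x_j$, discarding the spurious root $x=0$ for odd $k$, and matching the resulting number of distinct $z$-roots with the degree $\lfloor \frac{k+2}{2}\rfloor$ of $\ell_{k,k}$ read off from Lemma~\ref{lem:chebyshev}.
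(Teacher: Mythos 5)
Your proposal is correct and follows essentially the same route as the paper: identify $\ell_{k,k}$ with $U_{k+2}$ via Lemma~\ref{lem:chebyshev}, read off the roots $\cos\left(\frac{j\pi}{k+3}\right)$, and deduce the last claim from Lemma~\ref{lem:lkk1} together with simplicity of the Chebyshev roots. You merely spell out the substitution bookkeeping (pairing $\pm x_j$, discarding $x=0$ for odd $k$, matching the degree $\lfloor\frac{k+2}{2}\rfloor$) that the paper leaves implicit, and this accounting checks out.
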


\begin{proof}
	The results follow from the well-known results on Chebyshev polynomials~\cite[Section~18.5]{NIST:DLMF}.
	In particular, the roots $x_{k,j}$ of $U_k(z)$ admit the closed-form expressions
	\begin{align*}
		x_{k,j} &= \cos\left(\frac{j \pi}{k+1}\right).
	\end{align*}
	This implies the closed-form expression of $\rho_k$.
	The last claim follows from the closed-form expression of $\ell_{k,k-1}(z)$ from Lemma~\ref{lem:lkk1} and the fact that the roots of Chebyshev polynomials are all simple.
	
	Finally, note that $\rho_{k} \leq \rho_{k-1}$ and $\rho_0=1$ is the singularity of $R_0(z)=\frac{1}{1-z}$. Let $\mu_k$ be the dominant singularity of $R_k(z)$. We prove by induction that $\mu_k = \rho_k$. Combinatorially, it is clear that $\mu_k \leq \mu_{k-1} = \rho_{k-1}$. Furthermore $\mu_k$ must be related to $x_{k,j}$. Thus, as the the roots of the Chebyshev polynomials are interlacing we can only have $\mu_k = \rho_k$. 
\end{proof}

Note that $\rho_0 = 1$, $\rho_1 = \frac{1}{2}$, and $\rho_2=\frac{2}{3+\sqrt{5}}$ are exactly the singularities of $R_0(z)$, $R_1(z)$, and $R_2(z)$, respectively. 
Furthermore, with this information, we are finally able to characterize the indicial polynomials.

\begin{prop}
	\label{prop:indicialpoly}
	The indicial polynomial $I_k(\alpha)$ at $\rho_k$ of the $k$-th differential equation is given by $I_k(\alpha) = \alpha^{\underline{k-1}} (\alpha-(\frac{k}{2}-1))$. 
\end{prop}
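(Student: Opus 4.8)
The plan is to evaluate the indicial polynomial of Definition~\ref{def:indicialpoly} at the dominant singularity $\zeta=\rho_k$, using the explicit information about the two top coefficients $\ell_{k,k}(z)$ and $\ell_{k,k-1}(z)$ gathered in Lemmas~\ref{lem:lkk1} and~\ref{lem:lkkproperties}. First I would put the $k$-th differential equation into the normalized shape~\eqref{eq:difeqmerogen} by dividing $L_k \cdot \Rgf_k(z)=0$ through by its leading coefficient, so that $a_i(z)=\ell_{k,k-i}(z)/\ell_{k,k}(z)$ for $i=1,\ldots,k$ (note $a_k\equiv 0$ since $\ell_{k,0}\equiv 0$ by Theorem~\ref{theo:Dkprop}). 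By Lemma~\ref{lem:lkkproperties} the root $\rho_k$ of $\ell_{k,k}$ is simple, so each $a_i$ has at most a simple pole at $\rho_k$; hence $\omega_{\rho_k}(a_i)\le 1\le i$ and $\rho_k$ is a regular singular point in the sense of Definition~\ref{def:regularsing}, which is exactly what is required for the indicial polynomial to be defined.

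Next I would compute the quantities $\delta_i=\lim_{z\to\rho_k}(z-\rho_k)^i a_i(z)$. Because each $a_i$ has at most a simple pole, for every $i\ge 2$ the factor $(z-\rho_k)^i$ kills the singularity and $\delta_i=0$. The only surviving coefficient is therefore $\delta_1$. Writing the simple zero as $\ell_{k,k}(z)=\ell'_{k,k}(\rho_k)\,(z-\rho_k)+\LandauO((z-\rho_k)^2)$ with $\ell'_{k,k}(\rho_k)\neq 0$ (again by simplicity of the root), a direct limit computation gives
\begin{align*}
\delta_1 = \lim_{z\to\rho_k}(z-\rho_k)\frac{\ell_{k,k-1}(z)}{\ell_{k,k}(z)} = \frac{\ell_{k,k-1}(\rho_k)}{\ell'_{k,k}(\rho_k)}.
\end{align*}
At this point the key input is Lemma~\ref{lem:lkk1}, i.e.\ the polynomial identity $\ell_{k,k-1}(z)=\frac{k}{2}\,\ell'_{k,k}(z)$, which instantly yields $\delta_1=k/2$. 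I would also remark that, since this identity holds as an identity of polynomials and since all roots of $\ell_{k,k}$ are simple, the value $\delta_1=k/2$ --- and hence the indicial polynomial --- is in fact the same at every root of the leading coefficient, not just at $\rho_k$.

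Finally I would assemble the indicial polynomial from Definition~\ref{def:indicialpoly}. With $\delta_1=k/2$ and $\delta_2=\cdots=\delta_k=0$ it reads
\begin{align*}
I_k(\alpha) = \alpha^{\underline{k}} + \frac{k}{2}\,\alpha^{\underline{k-1}} = \alpha^{\underline{k-1}}\Bigl(\alpha-(k-1)+\frac{k}{2}\Bigr) = \alpha^{\underline{k-1}}\Bigl(\alpha-\bigl(\tfrac{k}{2}-1\bigr)\Bigr),
\end{align*}
using $\alpha^{\underline{k}}=\alpha^{\underline{k-1}}(\alpha-(k-1))$. This is the claimed factorization. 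I do not expect any serious obstacle here: the whole argument rests on the regularity of $\rho_k$ and on the single nontrivial limit $\delta_1$, both of which are handed to us by Lemmas~\ref{lem:lkk1} and~\ref{lem:lkkproperties}; the only point that genuinely requires those lemmas (rather than bookkeeping) is the evaluation $\ell_{k,k-1}(\rho_k)/\ell'_{k,k}(\rho_k)=k/2$.
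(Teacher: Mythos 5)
Your proposal is correct and follows essentially the same route as the paper: both arguments reduce everything to $\delta_1$ via the simplicity of the roots of $\ell_{k,k}$ (Lemma~\ref{lem:lkkproperties}), evaluate $\delta_1=\ell_{k,k-1}(\rho_k)/\ell'_{k,k}(\rho_k)$ by the same limit computation, and conclude $\delta_1=k/2$ from the identity $\ell_{k,k-1}=\tfrac{k}{2}\ell'_{k,k}$ of Lemma~\ref{lem:lkk1}. Your explicit verification that $\rho_k$ is a regular singular point and your remark that the indicial polynomial is the same at every root of the leading coefficient are welcome additions, but the substance is identical to the paper's proof.
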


\begin{proof}
	By Definition~\ref{def:indicialpoly} we need to show that $\delta_i=0$ for $i \geq 2$ and
$\delta_1 = \frac{k}{2}$. The first claim holds by Lemma~\ref{lem:lkkproperties}, as $\ell_{k,k-i}(z)/\ell_{k,k}(z)$ has no higher-order poles for all $i \geq 1$.
	
	Let us reformulate the second claim:
	\begin{align}
		\label{eq:delta1explicit}
		\delta_1 &= \lim_{z \to \rho_k} \frac{\ell_{k,k-1}(z)}{\frac{\ell_{k,k}(z)}{z-\rho_k}} = \frac{\ell_{k,k-1}(\rho_k)}{\ell'_{k,k}(\rho_k)} = \frac{k}{2},
	\end{align}
	where the second equality sign holds because of de l'Hospital's rule and Lemma~\ref{lem:lkkproperties} ($\rho_k$ is not a root of $\ell_{k,k-1}(z)$). The last equality holds by Lemma~\ref{lem:lkk1} 
\end{proof}

With the help of the following lemma, we are able to simplify the indicial polynomials further. 
\begin{lemma}
	\label{lem:zeropolys}
	For $k \geq 2$ and $0 \leq i \leq \lfloor \frac{k-2}{2} \rfloor$ we have $\ell_{k,i}(z) \equiv 0$.
\end{lemma}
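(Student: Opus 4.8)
The plan is to prove the statement by strong induction on $k$, reading everything off from the recurrences for $\ell_{k,i}(z)$ supplied by Theorem~\ref{theo:Dkprop}. Write $m(k) := \lfloor \frac{k-2}{2} \rfloor$ for the claimed upper bound on the index $i$; the assertion is that $\ell_{k,i} \equiv 0$ for all $0 \le i \le m(k)$. The base cases $k=2$ and $k=3$ are immediate, since there $m(k)=0$ and the recurrence already gives $\ell_{k,0}\equiv 0$ for every $k \ge 2$.

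For the inductive step I fix $k \ge 4$ and assume the lemma for all smaller values $\ge 2$. The case $i=0$ is again handled by $\ell_{k,0}\equiv 0$. For $i=1$ (relevant only when $m(k)\ge 1$, i.e.\ $k \ge 4$) the recurrence gives $\ell_{k,1} = \ell_{k-1,0} - 2\ell_{k-2,0}$, and both summands vanish because $k-1,k-2 \ge 2$. The heart of the argument is the range $2 \le i \le m(k)$ (which forces $k \ge 6$, so that $k-1,k-2 \ge 4$ and the induction hypothesis applies to both). Here I use
\[
 \ell_{k,i} = \ell_{k-1,i-1} - (i+1)\,\ell_{k-2,i-1} - z\,\ell_{k-2,i-2},
\]
and I want each of the three terms on the right to be $\equiv 0$ by the induction hypothesis.

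The main (and essentially only) point requiring care is the bookkeeping of the floor inequalities guaranteeing that each index pair lands in the already-established zero region $\{(k',j): 0\le j \le m(k')\}$. Concretely, $\ell_{k-1,i-1}\equiv 0$ needs $i-1 \le m(k-1)=\lfloor\frac{k-3}{2}\rfloor$; next $\ell_{k-2,i-1}\equiv 0$ needs $i-1 \le m(k-2)=\lfloor\frac{k-4}{2}\rfloor = m(k)-1$; and $\ell_{k-2,i-2}\equiv 0$ needs $i-2 \le m(k-2)=m(k)-1$. Since $i \le m(k)$ by assumption, all three hold, the tightest being the middle term $\ell_{k-2,i-1}$, which is satisfied with \emph{equality} exactly at $i=m(k)$ (using $\lfloor\frac{k-4}{2}\rfloor = \lfloor\frac{k-2}{2}\rfloor - 1$). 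Hence the entire right-hand side vanishes and the induction closes. I expect no genuine difficulty beyond this index arithmetic; the only thing to watch is to keep the exceptional low indices ($k=0,1$, where $\ell_{0,0},\ell_{1,0},\ell_{1,1}$ are nonzero) out of the recurrence's reach, which the constraints $k-2\ge 2$ in the relevant cases automatically ensure.
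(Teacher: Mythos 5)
Your proof is correct and follows essentially the same route as the paper: the base cases $i=0,1$ are read off directly from the recurrence, and the range $i\ge 2$ is closed by induction on $k$ using $\ell_{k,i}=\ell_{k-1,i-1}-(i+1)\ell_{k-2,i-1}-z\,\ell_{k-2,i-2}$. The only difference is that you spell out the floor-function inequalities that the paper dismisses as ``easy to check,'' and your bookkeeping is accurate.
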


\begin{proof}
	Let us start with the cases $i=0$ and $i = 1$. 
	As defined in Theorem~\ref{theo:Dkprop} we have $\ell_{k,0}(z) = 0$ for $k\geq 2$.  The case $i =1$ is valid for $k \geq 4$. Then, we have
	\begin{align*}
		\ell_{k,1}(z) &= \ell_{k-1,0}(z) - \ell_{k-2,0}(z) = 0. 
	\end{align*}
	
	For the cases $i \geq 2$ we use induction on $k$. 
	Assume the claim holds for $2,\ldots,k-1$ and arbitrary $i$. Then, we have 
	\begin{align*}
		\ell_{k,i}(z) &= \ell_{k-1,i-1}(z) - (i+1)\ell_{k-2,i-1}(z) - z \ell_{k-2,i-2}(z) =0. 
	\end{align*}
	In all three cases it is easy to check that $i \leq \lfloor \frac{k-2}{2} \rfloor$ and the induction hypothesis implies that these terms are equal to $0$.
\end{proof}
Hence, the differential equation of order $k$ is actually a differential equation of order 
\begin{alignat*}{5}
	\tilde{r} &:= \left\lceil \frac{k}{2} \right\rceil & \quad \text{ for the function } \quad &
	\tilde{R}_k(z) &:= \frac{d^{\lfloor \frac{k}{2}\rfloor}}{dz^{\lfloor \frac{k}{2}\rfloor}} R_k(z).
\end{alignat*}
In other words, we have
\begin{align}
	\label{eq:relaxedreducedode}
	\ell_{k,k}(z) D^{\tilde{r}} \cdot \tilde{R}_k + 
	\ell_{k,k-1}(z) D^{\tilde{r}-1} \cdot \tilde{R}_k + \dots +
	\ell_{k,\lfloor \frac{k}{2}\rfloor}(z) \tilde{R}_k = 0.
\end{align}

\begin{coro}
	\label{coro:indicialpolyreduced}
	Let $\tilde{I}_k(\alpha)$ be the indicial polynomial at $\rho_k$ of the reduced
	differential equation~\eqref{eq:relaxedreducedode}. Then, 
	\begin{align*}
		\tilde{I}_k(\alpha) &=
			\begin{cases}
				\alpha^{\underline{\tilde{r}-1}} \left(\alpha+1\right), & \text{if } $k$ \text{ even},\\
				\alpha^{\underline{\tilde{r}-1}} \left(\alpha+\frac{1}{2}\right), & \text{if } $k$ \text{ odd}.
			\end{cases}
	\end{align*}
\end{coro}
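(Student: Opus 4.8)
The plan is to read off $\tilde I_k$ directly from the computation already performed for the full operator $L_k$ in Proposition~\ref{prop:indicialpoly}, exploiting the fact that passing to the reduced equation~\eqref{eq:relaxedreducedode} leaves the top $\tilde r$ coefficient functions untouched. First I would record the consequence of Lemma~\ref{lem:zeropolys}: the smallest index $i$ with $\ell_{k,i}(z)\not\equiv0$ is $\lfloor k/2\rfloor$ (since $\lfloor (k-2)/2\rfloor+1=\lfloor k/2\rfloor$ for both parities), so after the substitution $\tilde R_k=D^{\lfloor k/2\rfloor}R_k$ the operator genuinely has order $\tilde r=\lceil k/2\rceil$, with surviving coefficients $\ell_{k,k},\ell_{k,k-1},\dots,\ell_{k,\lfloor k/2\rfloor}$.

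Next I would normalize~\eqref{eq:relaxedreducedode} into the monic form~\eqref{eq:difeqmerogen} by dividing through by $\ell_{k,k}(z)$, producing coefficients $\tilde a_i(z)=\ell_{k,k-i}(z)/\ell_{k,k}(z)$ for $i=1,\dots,\tilde r$. The key observation is that these are \emph{the very same} rational functions as the leading $\tilde r$ normalized coefficients $a_i(z)=\ell_{k,k-i}(z)/\ell_{k,k}(z)$ of the full operator $L_k$. Hence the limits $\tilde\delta_i=\lim_{z\to\rho_k}(z-\rho_k)^i\tilde a_i(z)$ coincide with the $\delta_i$ evaluated in the proof of Proposition~\ref{prop:indicialpoly}: we have $\tilde\delta_1=\delta_1=\tfrac{k}{2}$ (using Lemma~\ref{lem:lkk1} together with the facts from Lemma~\ref{lem:lkkproperties} that $\rho_k$ is a simple root of $\ell_{k,k}$ and not a root of $\ell_{k,k-1}$), while $\tilde\delta_i=\delta_i=0$ for $2\le i\le\tilde r$.

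Feeding these values into Definition~\ref{def:indicialpoly} with $r=\tilde r$ then yields
\[
 \tilde I_k(\alpha)=\alpha^{\underline{\tilde r}}+\tfrac{k}{2}\,\alpha^{\underline{\tilde r-1}}
 =\alpha^{\underline{\tilde r-1}}\bigl(\alpha-\tilde r+1+\tfrac{k}{2}\bigr).
\]
It remains only to evaluate the linear factor according to the parity of $k$: for $k$ even, $\tilde r=k/2$ gives $\alpha-\tilde r+1+\tfrac{k}{2}=\alpha+1$, and for $k$ odd, $\tilde r=(k+1)/2$ gives $\alpha+\tfrac12$, which is exactly the claimed case distinction.

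I do not expect a genuine obstacle here, as the whole argument is bookkeeping layered on top of Proposition~\ref{prop:indicialpoly} and Lemma~\ref{lem:zeropolys}. The one point deserving explicit care is verifying that reducing the order (dividing by $\ell_{k,k}$) preserves the regular-singularity data at $\rho_k$ and does not alter $\delta_1$; this is immediate, since the identical quotients $\ell_{k,k-i}/\ell_{k,k}$ govern both equations and $\rho_k$ is a simple root of $\ell_{k,k}$, so no pole orders change.
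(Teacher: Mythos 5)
Your proposal is correct and follows essentially the same route as the paper: it invokes Proposition~\ref{prop:indicialpoly} together with the observation that reducing the order via Lemma~\ref{lem:zeropolys} leaves the normalized coefficients $\ell_{k,k-i}/\ell_{k,k}$ (hence $\delta_1=\tfrac{k}{2}$ and $\delta_i=0$ for $i\ge2$) unchanged, and then evaluates $\alpha^{\underline{\tilde r-1}}\bigl(\alpha-\tilde r+1+\tfrac{k}{2}\bigr)$ by parity. The paper's proof is just a terser version of the same bookkeeping.
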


\begin{proof}
	This is a direct consequence of Proposition~\ref{prop:indicialpoly}. 
	As only the order of the differential equation changed but not its coefficients, we get
	$$\tilde{I}_k(\alpha) = \alpha^{\underline{\tilde{r}}} + \delta_1 \alpha^{\underline{\tilde{r}-1}} 
	                      = \alpha^{\underline{\tilde{r}-1}} \left(\alpha - \left\lceil \frac{k}{2} \right\rceil + \frac{k}{2} +  1\right).               
	$$
	Considering the even and odd case separately yields the result.
\end{proof}

After these technical steps, we can finally prove our first main result.

\begin{proof}[Proof of Theorem~\ref{theo:relaxededmain}]
	By Lemma~\ref{lem:lkkproperties}, $\rho_k$ is the dominant singularity of $R_k(z)$. 
	Furthermore, the pole of $\frac{\ell_{k,i}(z)}{\ell_{k,k}(z)}$ at $z=\rho_k$ for $i=1,\ldots,k-1$
	is of order one for $i \geq 1$. Thus, by Definition~\ref{def:regularsing} it is a regular
singularity of the differential equation. 
	
	Furthermore, by Corollary~\ref{coro:indicialpolyreduced} the set of roots of the reduced
indicial polynomial is $\{-1,0,1,\ldots,\tilde{r}-2\}$ for even $k$ and $\{-\frac{1}{2},0,1,\ldots,\tilde{r}-2\}$ for odd $k$.
	In both cases by Theorem~\ref{theo:odeasymptregsing},
	a basis in a slit neighborhood of $\rho_k$ consists of the analytic functions
	\begin{align*}
		(1-z/\rho_k)^s H_s(1-z/\rho_k),
	\end{align*}
	for $s=0,\ldots,\tilde{r}-2$ where $H_s$ is analytic and nonzero at $0$, and a singular function
	\begin{align} \label{singular_fun}
		\begin{cases}
			\frac{1}{1-z/\rho_k} H(1-z/\rho_k) + G(1-z/\rho_k) \log(1-z/\rho_k), & \text{ if $k$ is even},\\
			\frac{1}{\sqrt{1-z/\rho_k}} H(1-z/\rho_k), & \text{ if $k$ is odd},\\
		\end{cases} 
	\end{align}
	with functions $G,H$ being analytic and nonzero at $0$. These functions form a basis of the solution space of~\eqref{eq:relaxedreducedode}.
	
	In order to obtain a basis of the solution space of the original differential equation
\eqref{operater_L_k}, we need to integrate $\left\lfloor \frac{k}{2} \right\rfloor$ times.
	The analytic basis functions remain analytic and the singular one singular.
	As there is always just one singular function, and we know that $\Rgf_k(z)$ is singular at $\rho_k$, this function must be responsible for the asymptotic growth.
	We get a singular expansion for $z \to \rho_k$ of the kind
	\begin{align*}
		R_k(z) &\sim 
				\begin{cases}
					\tilde\gamma_k (1-z/\rho_k)^{k/2-1}\log\left(\frac{1}{1-z/\rho_k}\right),  & \text{ if $k$ is even},\\
					\tilde\gamma_k (1-z/\rho_k)^{k/2-1},  & \text{ if $k$ is odd}.
				\end{cases}
	\end{align*}
Theorem~\ref{theo:odeasymptregsing} implies that $\tilde\gamma_k$ is a nonzero constant. As
$R_k(z)$ is the generating function of a counting sequence, the sign of $\tilde\gamma_k$
must be such that the coefficients of the asymptotic main term of $R_k(z)$ are eventually positive.
	Finally, applying the transfer theorems~\cite{flaj09}, the claim holds with 
\[
\gamma_k = \frac{\tilde\gamma_k}{\Gamma(-k/2+1)} \text{ for odd $k$ and } \gamma_k =
(-1)^{k/2+1} \Gamma\left(\frac k2\right)\tilde\gamma_k\text{ for even $k$}. \qedhere
\]
\end{proof}
	
	Let us comment on the even case. It is a priori not clear if this logarithmic term in
\eqref{singular_fun} appears or not (if not we set $G\equiv 0$). But due to the appearance of the term with the polar singularity, the logarithmic term does not influence the asymptotic main term. Obviously, it plays a role for the error terms. 
	For specific cases, we can of course answer this question. For $k=2$, we have seen in Section~\ref{sec:R2} that there are no logarithmic terms. However, in this case, the reduced indicial polynomial is only of order $1$, see Corollary~\ref{coro:indicialpolyreduced}. Therefore, the consecutive range of roots starting with $0$ does not exist.
	
	For $k=4$, logarithmic terms appear. In this case we have the operator 
	\begin{align*}
		(-z^3+6 z^2-5 z+1)D^2+(-6 z^2+24 z-10)D+(11-6 z)
	\end{align*}
	and the expansion point $\rho_4$ that is a root of $-z^3+6 z^2-5 z+1$.
	Then, the solution space is generated by the following two series:

\def\O{\LandauO}

\begin{tabular}{l l}
\label{eq:relaxedsols}
$1$ & \phantom{$+\tfrac{3255\rho_4^2-175989 \rho_4+805466}{49392} (\rho_4-z)^3 \log (z-\rho_4)$} \\
$-\tfrac{48 \rho_4^2-267 \rho_4+118}{14} (\rho_4-z)$ & \\
$+\tfrac{249 \rho_4^2-2340 \rho_4+5560}{588} (\rho_4-z)^2$ & \\
$-\tfrac{206442 \rho_4^2-1141941 \rho_4+502699}{7056} (\rho_4-z)^3$ & \\
$+\O((\rho_4-z)^{4}),$ & 
\end{tabular}

\medskip
and \\
\smallskip
 

\begin{tabular}{l l}
$(z-\rho_4)^{-1}$ & $+0\,(z-\rho_4)^{-1}\log(z-\rho_4)$ \\
$+0$ & $-\tfrac{6 \rho_4^2-33 \rho_4+14}{7} \log(z-\rho_4)$ \\
$+\tfrac{159 \rho_4^2-888 \rho_4+440}{196} (\rho_4-z)$ & $+\tfrac{3 \rho_4^2-54 \rho_4+194}{98} (\rho_4-z) \log (z-\rho_4)$ \\
$-\tfrac{2484 \rho_4^2-14439 \rho_4+9541}{1764} (\rho_4-z)^2$ & $-\tfrac{3834 \rho_4^2-21183 \rho_4+9221}{588} (\rho_4-z)^2 \log(z-\rho_4)$ \\
$+\tfrac{4199487 \rho_4^2-23629269 \rho_4+12255158}{592704} (\rho_4-z)^3$ & $+\tfrac{3255\rho_4^2-175989 \rho_4+805466}{49392} (\rho_4-z)^3 \log (z-\rho_4)$ \\
$+\O((\rho_4-z)^{4}\log(z-\rho_4)).$ & 
\end{tabular}

\section{Compacted binary trees}
\label{sec:compacted}

After the successful application of exponential generating functions
to relaxed trees of bounded right height, we will extend this method to compacted binary trees.
In this section we solve the problem of finding the generating function of compacted trees of bounded right height. We denote the class of compacted trees of right height at most $k$ by $\Cc_k$ and its corresponding exponential generating function by $\Cgf_k(z) = \sum_{n \geq 0} \cgf_{k,n} \frac{z^n}{n!}$. 

As every subtree in a relaxed tree of right height at most $0$ is unique, by Corollary~\ref{coro:R0} we immediately get 
\[
	\Cgf_0(z) = \frac{1}{1-z}.
\]

\subsection{The cherry operator}
\label{sec:C1}

We start with the subclass $\Cc_1$ of compacted trees of right height at most $1$.
The same ideas as in Section~\ref{sec:R1} are used in the analysis.
However, this case is more subtle as we have to guarantee uniqueness of the subtrees.
The main observation in this context is that in order to establish uniqueness
of the subtrees one needs to restrict the pointers of the cherries, see Proposition~\ref{prop:characterizingcompactedtrees}.

Consider a situation where the pointers of a cherry are pointing into a tree of size $k$.
Thus,  every pointer has $k+1$ possibilities ($+1$ due to the leaf).
In a relaxed setting this would mean that there are $(k+1)^2$ different configurations. 

In a compacted tree every internal node (or spine node) corresponds to a unique subtree.
Therefore, the cherry has only $(k+1)^2-k$ different options, see also Theorem~\ref{theo:comprecursion}. Let us introduce the corresponding operator now.

\begin{lemma}[Cherry operator]
	Let $\Cc$ be a class of compacted trees and $\Kc$ be the class obtained from $\Cc$
	by adding a new node with two pointers, where the decompacted tree of this new node
	(left pointer is left child, right pointer is right child) is not part of $\Cc$.  Then,
	\begin{align*}
		K(z) &= z \left(zC(z)\right)' - \int z C'(z) \, dz \\
			  &= z^2 C'(z) + \int C(z) \, dz.
	\end{align*}
\end{lemma}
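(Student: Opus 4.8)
The plan is to establish the first identity directly from a combinatorial count and then to pass to the second form by integration by parts. Write $C(z)=\sum_{n\ge 0} c_n \frac{z^n}{n!}$, where $c_n$ is the number of elements of $\Cc$ of size $n$. An element of $\Kc$ is obtained by taking a compacted tree $T\in\Cc$ of size $n$ and attaching a new root carrying two pointers, both of which point into the pool of $T$; the resulting object has size $n+1$. Each pointer has $n+1$ admissible targets (the $n$ internal nodes of $T$ together with the leaf), so there are $(n+1)^2$ placements in the relaxed sense. The uniqueness requirement forbids exactly those placements whose decompacted tree recreates a c-subtree already present in $T$. Since every internal node of $T$ is the root of a (unique) c-subtree whose two children already lie in the pool, there is precisely one forbidden placement per internal node, i.e.~$n$ forbidden placements in total. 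This is exactly the count $\gamma_{1,p}=(p+1)^2-p$ of Theorem~\ref{theo:comprecursion} specialized to $p=n$, so the number of elements of $\Kc$ of size $n+1$ equals $\bigl((n+1)^2-n\bigr)c_n$.

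Next I would translate this count into generating functions by splitting it into the unrestricted part and the correction. Starting from
\begin{align*}
 K(z) &= \sum_{n\ge 0}\bigl((n+1)^2-n\bigr)c_n\frac{z^{n+1}}{(n+1)!},
\end{align*}
I would treat the two summands separately. For the first, a term-by-term computation shows $\sum_{n\ge 0}(n+1)^2 c_n \frac{z^{n+1}}{(n+1)!}=z(zC(z))'$, which is precisely the operator ``add a root with two pointers'' in the relaxed sense, compare~\eqref{eq:addrootwith2pointers} and Lemma~\ref{lem:approot}. For the second, extracting coefficients gives $zC'(z)=\sum_{n\ge 1} n c_n \frac{z^n}{n!}$ and hence $\int zC'(z)\,dz=\sum_{n\ge 1} n c_n \frac{z^{n+1}}{(n+1)!}$, the $n=0$ term being void. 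Subtracting the two expressions yields the first claimed identity $K(z)=z(zC(z))'-\int zC'(z)\,dz$.

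Finally, I would derive the second representation purely by calculus, independently of the combinatorics. Expanding $z(zC(z))'=zC(z)+z^2C'(z)$ and applying integration by parts in the form $\int zC'(z)\,dz=zC(z)-\int C(z)\,dz$, I obtain
\begin{align*}
 K(z) &= zC(z)+z^2C'(z)-\Bigl(zC(z)-\int C(z)\,dz\Bigr)=z^2C'(z)+\int C(z)\,dz,
\end{align*}
which is the second form.

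I expect the only genuine subtlety to be the combinatorial justification that there are exactly $n$ forbidden placements, i.e.~that the uniqueness condition removes one configuration per internal node and no more. This rests on the fact that in a compacted tree all c-subtrees are distinct, so the $n$ recreatable subtrees are pairwise different and each is recreated by a single choice of the two pointers. Everything else is routine generating-function bookkeeping, where the only point requiring care is the index shift induced by adding the new root.
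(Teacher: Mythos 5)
Your proof is correct and follows essentially the same route as the paper: the unconstrained term $z(zC(z))'$ for adding a root with two pointers, a correction of exactly $n$ forbidden configurations per tree of size $n$ realized by $\int zC'(z)\,dz$, and integration by parts for the second form. The only cosmetic difference is that you verify the correction term by an explicit coefficient computation (tying it to $\gamma_{1,p}=(p+1)^2-p$), whereas the paper describes the same count via a small combinatorial construction on pointers; both are sound.
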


\begin{proof}
	The first term corresponds to the (unconstrained) operation of adding a root with two pointers, see~\eqref{eq:addrootwith2pointers}. The second one is responsible for the correction, by deleting the number of subtrees which are already part of $\Cc$, see Figure~\ref{fig:cherryoperator}: 
	
	Consider a tree of $\Cc$ of size $k$. The integrand creates a pointer attached to the root possibly pointing to all elements of the subtree. The integration operator adds a new root node without a pointer. By attaching the newly created pointer to this new root, and changing the pointer in the case of it pointing to the leaf by letting it point to the old root, we generate $k$ new elements from this specific tree: A new root with a pointer to every internal node of the tree.
	This is exactly the number of elements which we need to subtract in order to ensure uniqueness.
	
	The second expression results from an integration by parts of the first one.
\end{proof}

\begin{figure}[htb]
	\centering
	\includegraphics[width=0.8\textwidth]{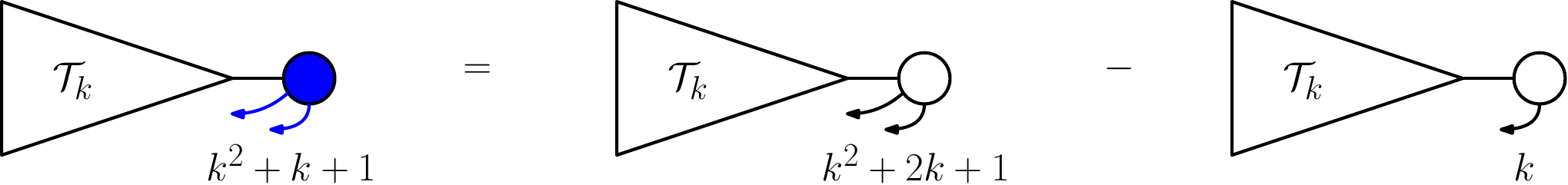}
	\caption{\small The construction of the cherry operator. The formulas below the pointers state the possible destinations of the pointers in the tree $\Tc_k$. The left tree is the desired one, the other ones are constructible ones. }
	\label{fig:cherryoperator}
\end{figure}

Let us also define the corresponding operator which performs the previous operation:
\begin{align*}
	K(C(z)) := z \left(zC(z)\right)' - \int z C'(z) \, dz.
\end{align*}

Next, we decompose $\Cc_1$ into
\begin{align*}
	\Cgf_1(z) &= \sum_{\ell \geq 0} \Cgf_{1,\ell}(z), 
\end{align*}
where $\Cgf_{1,\ell}(z)$ is the exponential generating function of compacted trees of right height at most $1$ with exactly $\ell$ right edges in the spine going from level $0$ to level $1$.  

\begin{coro}
	\label{coro:C1l}
	The generating function of compacted trees with exactly $\ell$ right edges from level $0$ to level $1$ in the spine is given by
	\begin{align*}
		\Cgf_{1,\ell}(z) &= \frac{1}{1-z} \int \frac{1}{1-z} K\left( C_{1,\ell-1}(z) \right) \, dz , \qquad \ell \geq 1,\\
		\Cgf_{1,0}(z) &= \frac{1}{1-z}.
	\end{align*}
\end{coro}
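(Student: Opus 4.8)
The plan is to mirror the argument used for relaxed trees in Proposition~\ref{prop:R11} and Corollary~\ref{coro:R1l}, since the combinatorial decomposition of $\Cc_{1,\ell}$ is identical to that of $\Rc_{1,\ell}$. The only, but essential, new feature is the uniqueness condition on subtrees. By the discussion following Proposition~\ref{prop:characterizingcompactedtrees}, uniqueness can fail only at cherries, i.e.\ at nodes whose two children are both pointers, since any node possessing a genuine (non-pointer) child automatically spawns a fresh subtree. In an element of $\Cc_{1,\ell}$ the level-$0$ trunk and each level-$1$ branch are left-chains, so the only cherry arising in a single peeled branch is its terminal node. Consequently, the plain ``add a root with two pointers'' step from~\eqref{eq:addrootwith2pointers} must be replaced by the cherry operator $K$ introduced above, which subtracts exactly those placements of the two pointers that would recreate an already existing subtree.

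First I would settle the base case $\ell=1$. Following the four steps of Proposition~\ref{prop:R11}, I strip the initial sequence on level~$0$, move the level-$0$ continuation sequence to the end of the level-$1$ sequence and cap it with a cherry, reinsert the level-$0$ right-edge node by an integration (Lemma~\ref{lem:adddelnopointer}), and finally reprepend the initial sequence through the sequence operator (Proposition~\ref{prop:seqpoint}). Performing the capping with $K$ instead of $z(zC(z))'$ yields
\begin{align*}
	\Cgf_{1,1}(z) = \frac{1}{1-z} \int \frac{1}{1-z} K\bigl(\Cgf_{1,0}(z)\bigr) \, dz,
\end{align*}
while the base value $\Cgf_{1,0}(z)=\Cgf_0(z)=\frac{1}{1-z}$ is inherited from right height~$0$, where all subtrees are trivially unique.

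For general $\ell$ I would invoke the recursive cut of Corollary~\ref{coro:R1l}, shown in Figure~\ref{fig:R1lstructure}: cutting at the first right edge from level~$0$ to level~$1$ splits the object into an initial sequence, that right edge together with its level-$1$ sequence and terminal cherry, and a residual instance carrying the remaining $\ell-1$ right edges, which is precisely an element counted by $\Cgf_{1,\ell-1}(z)$. Replacing the initial value $\Cgf_{1,0}$ by $\Cgf_{1,\ell-1}$ in the construction above then gives the claimed recurrence.

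The hard part will be justifying that a single application of $K$ to $\Cgf_{1,\ell-1}(z)$ enforces uniqueness correctly. I must argue that the pool into which the terminal cherry may point is exactly the collection of subtrees absorbed into the argument of $K$, namely the residual $\Cc_{1,\ell-1}$ instance together with the level-$0$ sequence moved onto level~$1$, and that the increasing nature of the pool along the post-order traversal guarantees that the cherries of the remaining branches, which are handled at deeper levels of the recursion, never clash with the one treated here. Granting this, the surviving manipulations are the same algebraic operations on exponential generating functions already validated in Section~\ref{sec:R1}.
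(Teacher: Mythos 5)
Your proposal is correct and follows essentially the same route as the paper, whose proof simply states that the construction is analogous to Corollary~\ref{coro:R1l} with the ``add a root with two pointers'' step of~\eqref{eq:addrootwith2pointers} replaced by the cherry operator $K$. Your additional justification that uniqueness can only fail at the terminal cherry of each branch (so that a single application of $K$ suffices) is a welcome elaboration of a point the paper leaves implicit.
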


\begin{proof}
	The construction is analogous to the one of Corollary~\ref{coro:R1l}.
	The only difference is the use of the cherry operator in~\eqref{eq:addrootwith2pointers}.
\end{proof}	

\begin{theo}
	\label{theo:C1}
	The exponential generating function of compacted trees of right height at most~$1$ is D-finite and satisfies
	\begin{align*}
		(1-2z) \Cgf_1''(z) - (3-z)\Cgf_1'(z) &= 0, &
		\Cgf_1(0)=1, \quad
		\Cgf'_1(0)=1.
	\end{align*}
	The closed-form formula for $\Cgf_1'(z)$, and the asymptotic behavior of the coefficients are given by
	\begin{align*}
		\Cgf_1'(z) &= \frac{e^{z/2}}{(1-2z)^{5/4}}, &
		\cgf_{1,n} &= \frac{e^{1/4}}{\Gamma(1/4)} \frac{ n! 2^{n+1}}{ n^{3/4} } \left( 1 + \LandauO\left(\frac{1}{n} \right) \right).
	\end{align*}
\end{theo}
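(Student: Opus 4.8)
My plan is to mirror the strategy used for relaxed trees in Section~\ref{sec:R1}, starting from the recursive description in Corollary~\ref{coro:C1l} and converting the integral recurrence into a differential equation that I can then sum over all $\ell$. First I would take the relation
\[
	\Cgf_{1,\ell}(z) = \frac{1}{1-z} \int \frac{1}{1-z} K\left( C_{1,\ell-1}(z) \right) \, dz
\]
and clear the two integrations. Applying the operator $(1-z)$ and differentiating once yields
\[
	\left( (1-z) \Cgf_{1,\ell}(z) \right)' = \frac{1}{1-z} K\left(C_{1,\ell-1}(z)\right),
\]
and multiplying by $(1-z)$ and differentiating a second time removes the remaining integral hidden inside the cherry operator $K$. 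Using the explicit form $K(C(z)) = z^2 C'(z) + \int C(z)\,dz$ from the cherry operator lemma, I would differentiate $\frac{1}{1-z} K(C_{1,\ell-1}(z))$ to obtain a purely differential relation between $\Cgf_{1,\ell}$ and $\Cgf_{1,\ell-1}$ with no integral signs.

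The key step is then to sum this homogeneous differential relation over all $\ell \geq 1$. By the decomposition $\Cgf_1(z) = \sum_{\ell \geq 0}\Cgf_{1,\ell}(z)$ and linearity, the left-hand side produces an operator applied to $\Cgf_1(z) - \Cgf_{1,0}(z)$, while the right-hand side produces the same cherry-type operator applied to $\Cgf_1(z)$ (because the index shift $\ell \mapsto \ell-1$ recovers the full sum again). As in the proof of Theorem~\ref{theo:R1}, homogeneity of the per-level equation is exactly what makes this telescoping legitimate. After substituting the initial condition $\Cgf_{1,0}(z) = \frac{1}{1-z}$ and simplifying, I expect the perturbation terms coming from $\Cgf_{1,0}$ to collapse, leaving the stated second-order equation $(1-2z)\Cgf_1''(z) - (3-z)\Cgf_1'(z) = 0$. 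The initial conditions $\Cgf_1(0)=1$, $\Cgf_1'(0)=1$ follow from the fact that the first two coefficients count compacted trees of sizes $0$ and $1$, of which there is exactly one each.

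For the closed form, I would set $u(z) = \Cgf_1'(z)$, turning the equation into the first-order linear ODE $(1-2z)u'(z) - (3-z)u(z) = 0$. Separation of variables gives $\frac{u'}{u} = \frac{3-z}{1-2z}$; writing $\frac{3-z}{1-2z} = \frac12 + \frac{5/2}{1-2z}$ and integrating yields $\log u = \frac{z}{2} + \frac{5}{4}\log\frac{1}{1-2z} + \text{const}$, hence $u(z) = \frac{e^{z/2}}{(1-2z)^{5/4}}$ after fixing the constant via $u(0)=1$. Finally, for the coefficient asymptotics I would apply singularity analysis at the dominant singularity $z = \tfrac12$: the factor $(1-2z)^{-5/4}$ contributes the dominant term $2^n n^{-3/4}/\Gamma(5/4)$ via the standard transfer theorem, the analytic prefactor $e^{z/2}$ is evaluated at $z=\tfrac12$ to give $e^{1/4}$, and multiplying by $n!$ (since we extract from an exponential generating function) and using $\Gamma(5/4) = \tfrac14\Gamma(1/4)$ produces the stated formula for $\cgf_{1,n}$. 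The main obstacle I anticipate is the bookkeeping in the second step: correctly tracking how the integral term inside $K$ behaves under the summation over $\ell$ and verifying that all non-homogeneous remainders from the initial condition cancel, since a single sign error there would spoil the clean homogeneous second-order equation.
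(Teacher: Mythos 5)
Your proposal follows essentially the same route as the paper: sum the recurrence of Corollary~\ref{coro:C1l} over $\ell\ge 1$, differentiate to eliminate the integrals (the paper sums first and then differentiates once more, you clear the integrals per level before summing --- immaterial), insert $\Cgf_{1,0}=\tfrac{1}{1-z}$ so the inhomogeneous term vanishes, and solve the resulting first-order equation for $\Cgf_1'$ by separation of variables. One bookkeeping slip in your asymptotics: the transfer theorem gives $[z^n](1-2z)^{-5/4}\sim 2^n n^{1/4}/\Gamma(5/4)$, not $2^n n^{-3/4}/\Gamma(5/4)$; the exponent $-3/4$ and the factor $2^{n+1}$ emerge only after converting via $\cgf_{1,n}=(n-1)!\,[z^{n-1}]\Cgf_1'(z)$ rather than $n!\,[z^n]\Cgf_1'(z)$.
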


\begin{proof}
	Summing the result of Corollary~\ref{coro:C1l} for $l \geq 1$, interchanging summation, differentiation, and finally integration gives
	\begin{align*}
		(1-2z) \Cgf_1'(z) - \Cgf_1(z) - (1-z) \left( (1-z) \Cgf_{1,0} \right)' + \int z \Cgf_1'(z) \, dz = 0.
	\end{align*}
	Due to the remaining integral we differentiate both sides once more and get
	\begin{align}
		(1-2z) \Cgf_1''(z) - (3-z)\Cgf_1'(z) - \left((1-z) \left( (1-z) \Cgf_{1,0} \right)' \right)' = 0. \label{eq:C1dfinite}
	\end{align}
	Inserting $\Cgf_{1,0}(z) = \Cgf_0(z) = \frac{1}{1-z}$ we get the claimed differential equation
	\begin{align*}
		(1-2z) \Cgf_1''(z) - (3-z)\Cgf_1'(z) &= 0
	\end{align*}
	It can be solved by separation of variables with respect to $\Cgf_1'(z)$.
	The asymptotic behavior of the coefficients follows then directly from this representation.
\end{proof}

\subsection{Compacted trees of right height at most 2}
\label{sec:C2}

We decompose $\Cc_2$ such that we get
\begin{align*}
	\Cgf_2(z) &= \sum_{\ell \geq 0} \Cgf_{2,\ell}(z), 
\end{align*}
where $\Cgf_{2,\ell}(z)$ is the exponential generating function of compacted trees of right height at most $2$ with exactly $\ell$ right edges in the spine going from level $0$ to level $1$.  Obviously, we have $C_{2,0}(z) = \frac{1}{1-z}$. 

In the sequel we will use the operator calculus introduced in Section~\ref{sec:operations}.

\begin{prop}
	\label{prop:C2l}
	The generating function of compacted trees with right height at most $2$, and exactly $\ell$ right edges from level $0$ to level $1$ in the spine is given by
	\begin{align*}
		\Cgf_{2,\ell}(z) &= \Cgf_{2,\ell,A}(z) + \Cgf_{2,\ell,B}(z),\\
		\Cgf_{2,\ell,A}(z) &= A(\Cgf_{2,\ell-1}(z)),\\
		\bar{D}_2(\Cgf_{2,\ell,B}(z)) &= \bar{H}_2({C}_{2,\ell-1}(z)),
	\end{align*}
	with the linear operators $A = S \cdot I \cdot S \cdot K$, $\bar{D}_2 = M_1 \cdot D \cdot S^{-1}$, $\bar{H}_2 = (H_1 - M_1) \cdot (D\cdot z + S \cdot K)$, and $M_1$ and $H_1$ are given by
	\begin{align}
			M_1 &= (1-2z) D^2 - (3-z) D, \label{eq:compM1}\\
			H_1 &= (1-z)^2 D^2 - 3(1-z) D + 1. \label{eq:compH1}
	\end{align}
\end{prop}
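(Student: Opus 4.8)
The plan is to transplant the derivation of the relaxed case in Proposition~\ref{prop:R21} and Corollary~\ref{coro:R2l} to the compacted setting, inserting the cherry operator $K$ wherever a node with two pointers is created so that the uniqueness condition of Proposition~\ref{prop:characterizingcompactedtrees} is respected. First I would cut an element of $\Cc_{2,\ell}$ at its first right edge going from level~$0$ to level~$1$. This exposes the same four pieces as in the relaxed analysis: an initial sequence on level~$0$, the distinguished right edge, the level-$0$ sequence following it, and a $\Cc_1$-instance sitting on level~$1$ (of right height at most~$1$ relative to that level). As before I would move the trailing level-$0$ sequence to the end of the level-$1$ instance and then delete the initial sequence together with the level-$0$ endpoint of the right edge, recording these deletions as operators to be reapplied at the end. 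The decisive difference from Section~\ref{sec:R2} is that the node produced by the gluing is a cherry whose two pointers may reach the union of the level-$0$ and level-$1$ pools; hence the unconstrained two-pointer operation of~\eqref{eq:addrootwith2pointers} must be replaced by the cherry operator $K$.

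The resulting object is a $\Cc_1$-like structure, so I would invoke the differential equation for $\Cc_1$ from Theorem~\ref{theo:C1}, written in operator form as $M_1\cdot\Cgf_1=H_1\cdot\Cgf_{1,0}$ with $M_1,H_1$ as in \eqref{eq:compM1}--\eqref{eq:compH1}, and reuse it after replacing the initial condition $\Cgf_{1,0}$ by the new base built from $\Cgf_{2,\ell-1}$. Tracking the gluing, this base should be $(D\cdot z+S\cdot K)C_{2,\ell-1}$: the summand $S\cdot K$ is the sequence terminated by the freshly created cherry, while $D\cdot z$ is the relaxed-style contribution coming from the moved level-$0$ sequence (compare the substitution \eqref{eq:Gexpressed} in Proposition~\ref{prop:R21}). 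Denoting the generating function of this $\Cc_1$-like class by $G_\ell$, it then satisfies $M_1\cdot G_\ell=H_1\cdot(Dz+SK)C_{2,\ell-1}$.

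I would then split $\Cgf_{2,\ell}$ according to whether the level-$1$ structure is the shallow cherry-sequence that forms the base of the $\Cc_1$-recursion, or a genuinely recursive $\Cc_1$-instance reaching level~$2$. The shallow contribution is directly constructible: reapplying the two stripped operations (adding the deleted level-$0$ node back without a pointer via $I$, and prepending the initial sequence via $S$) to $S\cdot K\,C_{2,\ell-1}$ produces $\Cgf_{2,\ell,A}=A(\Cgf_{2,\ell-1})$ with $A=S\cdot I\cdot S\cdot K$, the literal analog of the $\Cc_1$ recursion in Corollary~\ref{coro:C1l}. For the remaining part I would use, as in the relaxed proof, the reduction that removes the initial term $G_{0,\ell}=(Dz+SK)C_{2,\ell-1}$ of $G_\ell$; combined with the reattachment identity $D\cdot S^{-1}\,\Cgf_{2,\ell,B}=G_\ell-G_{0,\ell}$, the $\Cc_1$ equation $M_1\cdot G_\ell=H_1\cdot G_{0,\ell}$ turns into $M_1\cdot D\cdot S^{-1}\,\Cgf_{2,\ell,B}=(H_1-M_1)(Dz+SK)C_{2,\ell-1}$, that is $\bar D_2\,\Cgf_{2,\ell,B}=\bar H_2\,C_{2,\ell-1}$.

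The hard part will be the combinatorial bookkeeping around the distinguished cherry: one must check that it sees exactly the combined level-$0$ and level-$1$ pool, that the subtraction built into $K$ (recall $K(C)=z(zC)'-\int zC'\,dz$) removes precisely the forbidden duplications in both directions without double counting at the interface created by the move, and that the shallow/deep split is exhaustive and disjoint. Tightly coupled to this is the purely algebraic verification of the factorizations $\bar D_2=M_1 D S^{-1}$ and $\bar H_2=(H_1-M_1)(Dz+SK)$: since $D$, $z$, $S$ and $K$ do not commute, the order in which the deletions and reattachments are undone is delicate, and—exactly as stressed in Section~\ref{sec:R2}—it is essential that the equation remain homogeneous in the inhomogeneity so that the eventual summation over all $\ell$ telescopes into a single differential equation for $\Cgf_2$.
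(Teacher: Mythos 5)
Your proposal follows essentially the same route as the paper's proof: cut at the first right edge from level~$0$ to level~$1$, glue the trailing level-$0$ sequence into the level-$1$ instance with the cherry operator $K$ replacing the unconstrained double-pointer operation of~\eqref{eq:addrootwith2pointers}, and split into the shallow case (yielding $A=S\cdot I\cdot S\cdot K$ exactly as in Corollary~\ref{coro:C1l}) and the case where level~$2$ exists (yielding $\bar D_2$ and $\bar H_2$ by reusing the $\Cc_1$-equation $M_1\cdot G_\ell=H_1\cdot G_{0,\ell}$ with the new initial condition $(D\cdot z+S\cdot K)\,C_{2,\ell-1}$ and subtracting its level-$1$-empty part). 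The one point where the paper is more explicit than you are is the origin of the $D\cdot z$ summand: it accounts for the sub-case in which the glued node carries only a single pointer into the moved sequence (no cherry is created because the intervening level-$1$ sequence is empty), which is why no uniqueness correction is needed there, rather than being a generic ``relaxed-style'' contribution of the moved sequence.
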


\begin{proof}
Using the same ideas as in the case of relaxed trees, we reduce
the number of levels by deleting the initial sequence, 
and moving the last sequence to the end of the next lower level, see Figure~\ref{fig:R21decomp}.
This produces a $\Cc_1$-like object with 
\begin{itemize}
	\item a new initial condition $\hat{\Cgf}_{2,0}(z)$ and
	\item the restriction of being non-empty. 
\end{itemize}

\begin{figure}[htb]
	\centering
	\includegraphics[width=0.33\textwidth]{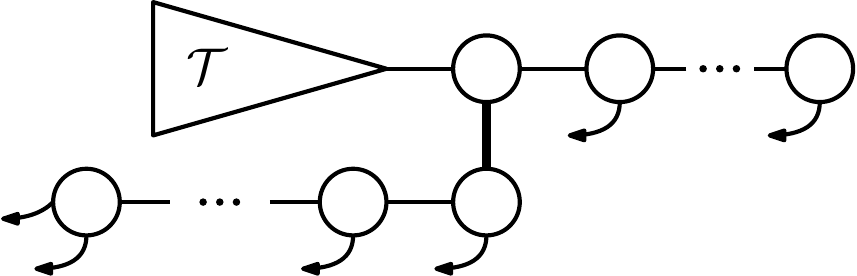}
	\qquad
	\includegraphics[width=0.4\textwidth]{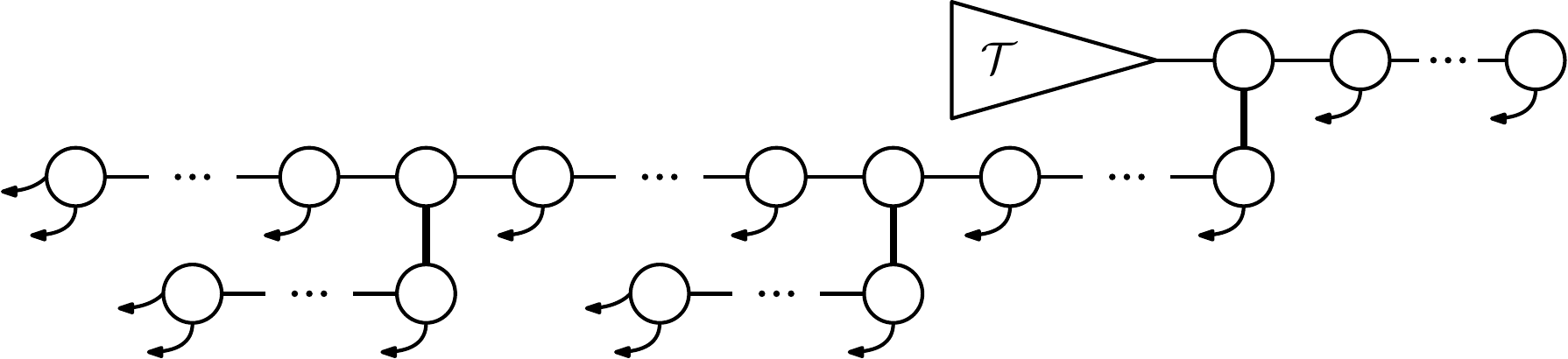}
	\caption{\small The $2$ possible cases for $\Cc_{2}$ instances: 
		Case (A) on the left, where level $2$ does not exist;
		and case (B) on the right, where level $2$ does exist.}
	\label{fig:C2AB}
\end{figure}

In contrast to the relaxed case of $\Rc_{2,1}$ we need to distinguish whether level $2$ exists or not
(Figure~\ref{fig:C2AB}). The different behaviors of single pointers and (double)
cherry pointers are responsible for these two cases. Let $\Cgf_{2,1} \in \Cc_{2,1}$,
$\hat{\Cgf}_{2,0}$ be the transformed object and $\Cgf_{2,0}$ be the part of level~$0$ located after the first right edge on level $0$ ($\Tc$ in Figure~\ref{fig:C2AB}).

\begin{figure}[htb]
	\centering
	\includegraphics[width=0.4\textwidth]{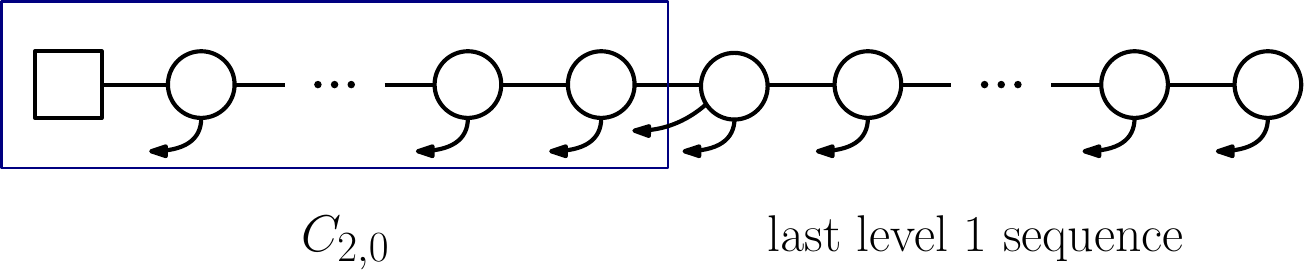}
	\caption{\small The new initial condition $\hat{\Cgf}_{2,0}(z)$. In case (A) the last sequence on level~$1$ cannot be empty, whereas in case (B) it may.}
	\label{fig:C20hat}
\end{figure}

\begin{enumerate}
	\item[(A)] 
		Let $C_{2,1,A}(z)$ be the generating function of compacted trees belonging to Case (A) and $\Cc_{2,1}$.
		In this case level $2$ does not exist
		(i.e.~the tree also belongs to $\Cc_{1}$). Then we need to have a cherry on level $1$,
		as this level is not allowed to be empty. This implies that the sequence 
		of $\hat{\Cgf}_{2,0}$ shown in Figure~\ref{fig:C20hat} cannot be empty.
		Then, due to the previous reasoning on relaxed trees (cf. Proposition~\ref{prop:R11}),
		and results on the trees in $\Cc_{1}$ (Corollary~\ref{coro:C1l}), we get the new initial condition for Case~(A): Instead of $C_{1,0}(z)$ in~\eqref{eq:C1dfinite} we must use
		\begin{align*}
			\hat{\Cgf}_{2,0,A}(z) 
				&:= \frac{1}{1-z}  K\left( \Cgf_{2,0}(z) \right)  
				 = \frac{z\left( z \Cgf_{2,0}(z)\right)' - \int z \Cgf_{2,0}'(z) \, dz}{1-z}.
		\end{align*}
		This implies
		\begin{align*}
			\Cgf_{2,1,A}(z) &= A(\Cgf_{2,0,A}(z)) := \frac{1}{1-z} \int \hat{\Cgf}_{2,0,A}(z) \, dz = \frac{1}{1-z} \int \frac{1}{1-z} K \left( \Cgf_{2,0,A}(z) \right) \, dz.
		\end{align*}
		The first factor $\frac{1}{1-z}$ corresponds to the initial sequence on level $0$, and the integral generates the level $0$ node of the distinguished right edge. In anticipation of the subsequent result, we introduced the operator $A$.
		
	\item[(B)] 
		Let $\Cgf_{2,1,B}(z)$ be the generating function of this case. 
		In this case level $2$ exists. 
		Then the last sequence of level $1$, and therefore the initial sequence of $\hat{\Cgf}_{2,0}$, is allowed to be empty, see Figure~\ref{fig:C20hat}. 
		This means that no cherry was lost during the transformation into an instance of $\Cc_{1}$
		as there is just one pointer pointing into $\Cgf_{2,0}$.
		Such a case is modeled by $(z \Cgf_{2,0}(z))'$.
		Combining it with the case of a non-empty sequence, we get the new initial condition of the case~(B):
		\begin{align*}
			\hat{\Cgf}_{2,0,B}(z) 
				&:= \left(z \Cgf_{2,0}(z) \right)' + \frac{1}{1-z} K\left( \Cgf_{2,0}(z) \right)   
				 = \frac{ \left(z \Cgf_{2,0}(z)\right)' - \int z \Cgf_{2,0}'(z) \, dz}{1-z}.
		\end{align*}

		The only difference to case (A) is the lack of the factor $z$ in front of $\left(z \Cgf_{2,0}(z)\right)'$.
		
		By assumption we have nodes on level $2$.
		This means that after the transformation into an instance of $\Cc_{1}$
		 we have nodes on level $1$. Let $\bar{\Cgf}_1(z)$ be the exponential generating function
		of compacted trees of right height at most $1$ with at least one node on level~$1$. Then
		\begin{align*}
			\bar{\Cgf}_1(z) &= \sum_{\ell \geq 1} \Cgf_{1,\ell}(z) = \Cgf_{1}(z) - \Cgf_{1,0}(z).
		\end{align*}
		The new operators, defined in \eqref{eq:compM1} and \eqref{eq:compH1}, fulfill the same tasks as the ones from Theorem~\ref{theo:diffoprelaxed} for relaxed trees. 
		From~\eqref{eq:C1dfinite} we infer that $\Cgf_{1}(z)$ satisfies $M_1(\Cgf_1(z)) = H_1(\Cgf_{0}(z))$. Thus, for $\bar{\Cgf}_1(z)$ we get the following differential equation:
		\begin{align*}
			M_1(\bar{\Cgf}_1(z)) = M_1(\Cgf_1(z) - \Cgf_{1,0}(z)) = H_1(\Cgf_{1,0}(z)) - M_1( \Cgf_{1,0}(z)) 
			.
		\end{align*}
		Then, the differential equation for $\Cgf_{2,1,B}(z)$ is given by
		\begin{align*}
			\bar{D}_2(\Cgf_{2,1,B}(z)) &:= M_1 \left( \left((1-z) \Cgf_{2,1,B}(z)\right)'\right) \\
			                           &= H_1(\hat{\Cgf}_{2,0,B}(z)) - M_1( \hat{\Cgf}_{2,0,B}(z) )
												 =: \bar{H}_2({C}_{2,0}(z)),
		\end{align*}
		because we are able to reuse~\eqref{eq:C1dfinite}, with the new initial condition $\hat{\Cgf}_{2,0,B}(z)$ instead of $C_{1,0}(z)$. Its solution is equal to $\left((1-z) \Cgf_{2,1,B}(z)\right)'$; 
		see the process of Proposition~\ref{prop:R21}. 
		The new differential operator is thus given by
		\begin{align*}
			\bar{D}_2(F) = (2z^2-3z+1) F''' - (z^2-10z+6)F'' - (2z-6) F'.
		\end{align*}	
\end{enumerate}

This process can now be continued recursively, like in Corollary~\ref{coro:R2l}.
In order to derive $\Cgf_{2,2}(z)$, we replace $\Cgf_{2,0}(z)$ by $\Cgf_{2,1}(z)$, and so on.
\end{proof}

Using the last result we are able to characterize compacted trees of right height at most~$2$. 

\begin{theo}
	\label{theo:C2}
	The exponential generating function of compacted trees of right height at most~$2$ is D-finite and satisfies
	\begin{align*}
		(z^2-3z+1) \Cgf_2'''(z) - (z^2-6z+6) \Cgf_2''(z) - (2z-3) \Cgf_2'(z) &= 0, \\
		\Cgf_2(0)=1,~\Cgf_2'(0)=1,~\Cgf_2''(0)&=3.
	\end{align*}
\end{theo}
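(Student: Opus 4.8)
The plan is to mimic the proof of Theorem~\ref{theo:R2}: sum the relations of Proposition~\ref{prop:C2l} over all $\ell \geq 1$ to obtain a single operator equation for $\Cgf_2(z)$, and then simplify it into a pure differential equation. Writing $\Cgf_{2,A}(z) = \sum_{\ell\ge 1}\Cgf_{2,\ell,A}(z)$ and $\Cgf_{2,B}(z) = \sum_{\ell\ge 1}\Cgf_{2,\ell,B}(z)$, linearity together with the index shift $\sum_{\ell\ge 1}\Cgf_{2,\ell-1}(z)=\Cgf_2(z)$ yields
\[
\Cgf_{2,A}(z)=A(\Cgf_2(z)), \qquad \bar D_2(\Cgf_{2,B}(z))=\bar H_2(\Cgf_2(z)),
\]
while the decomposition $\Cgf_2 = \Cgf_{2,0} + \Cgf_{2,A} + \Cgf_{2,B}$ lets me substitute $\Cgf_{2,B}=\Cgf_2-\Cgf_{2,0}-A(\Cgf_2)$ into the second relation.

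First I would apply $\bar D_2$ to this last identity, obtaining
\[
\bar D_2(\Cgf_2) - \bar D_2(\Cgf_{2,0}) - \bar D_2\, A(\Cgf_2) = \bar H_2(\Cgf_2).
\]
The crucial simplification is that $\bar D_2\cdot A = M_1\cdot D\cdot S^{-1}\cdot S\cdot I\cdot S\cdot K = M_1\cdot D\cdot I\cdot S\cdot K = M_1\cdot S\cdot K$, because $S^{-1}S$ is the identity and $D\,I=\mathrm{id}$ (differentiation undoes integration); thus the two integrations hidden inside $A$ collapse. Expanding $\bar H_2=(H_1-M_1)(D z + S K)$ and collecting the $S K$-terms, the $M_1 S K$ contributions cancel ($-M_1 S K-(H_1-M_1)S K=-H_1 S K$), leaving
\[
\bar D_2(\Cgf_2) - H_1\, S\, K(\Cgf_2) - (H_1-M_1)\, D\, z\,(\Cgf_2) = \bar D_2(\Cgf_{2,0}).
\]

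The second key point removes the remaining integral and reveals homogeneity. Since $K(C)=z^2C'+\int C\,dz$, the term $H_1 S K(\Cgf_2)$ contains $H_1\bigl(\frac{1}{1-z}\int\Cgf_2\,dz\bigr)$; a short computation with $H_1=(1-z)^2D^2-3(1-z)D+1$ shows that the coefficient of $\int\Cgf_2\,dz$ vanishes identically, so this expression equals $(1-z)\Cgf_2'-\Cgf_2$ and all integrals disappear. Moreover, inserting $\Cgf_{2,0}(z)=\frac{1}{1-z}$ gives $\bar D_2(\Cgf_{2,0})=M_1\,D\,S^{-1}(\frac{1}{1-z})=M_1\,D(1)=0$, so the right-hand side vanishes and the equation is homogeneous. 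What remains is the purely mechanical task of expanding the three third-order operators, collecting the polynomial coefficients of $\Cgf_2'''$, $\Cgf_2''$ and $\Cgf_2'$, and checking that they combine into $z^2-3z+1$, $-(z^2-6z+6)$ and $-(2z-3)$; as a consistency check, the leading coefficient is $(2z^2-3z+1)+(z^3-z^2)-z^3=z^2-3z+1$, matching the claim.

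This final coefficient bookkeeping is the main obstacle, as the compositions with $S$, $S^{-1}$ and $K$ produce many cancelling terms; it can be confirmed with a computer algebra system, exactly as in Theorem~\ref{theo:R2}. The three initial conditions then follow from the observation that every compacted tree of size at most $2$ automatically has right height at most $2$, so that the first coefficients of $\Cgf_2(z)$ coincide with the counting sequence of Section~\ref{sec:recurrence}; this gives $\Cgf_2(0)=\cgf_0=1$, $\Cgf_2'(0)=\cgf_1=1$ and $\Cgf_2''(0)=\cgf_2=3$.
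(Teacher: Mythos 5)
Your proposal is correct and follows essentially the same route as the paper: sum Proposition~\ref{prop:C2l} over $\ell\geq 1$, use the decomposition $\Cgf_2=\Cgf_{2,0}+\Cgf_{2,A}+\Cgf_{2,B}$ to obtain $M_2(\Cgf_2)=\bar D_2(\Cgf_{2,0})$, and observe that the right-hand side vanishes for $\Cgf_{2,0}=\frac{1}{1-z}$. The only difference is that you carry out by hand the operator simplifications (the collapse of $\bar D_2\cdot A$ to $M_1\cdot S\cdot K$ and the identity $H_1\bigl(\tfrac{1}{1-z}\int\Cgf_2\,dz\bigr)=(1-z)\Cgf_2'-\Cgf_2$, both of which check out) that the paper delegates entirely to a computer algebra system.
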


\begin{proof}
	The generating function $\Cgf_2(z)$ is decomposed into three parts:
	\begin{align*}
		\Cgf_{2}(z) &= \Cgf_{2,0}(z) + \Cgf_{2,A}(z) + \Cgf_{2,B}(z),
	\end{align*}
	where $\Cgf_{2,A}(z) = \sum_{\ell \geq 0} \Cgf_{2,\ell, A}(z)$, $\Cgf_{2,B}(z) = \sum_{\ell \geq 0} \Cgf_{2,\ell, B}(z)$, and the initial values $\Cgf_{2,0,A}(z) = \Cgf_{2,0,B}(z) = 0$. 
	Summing the results of Proposition~\ref{prop:C2l} for $\ell \geq 1$ gives
	\begin{align*}
		\Cgf_{2,A}(z) &= A(\Cgf_{2}(z)),\\
		\bar{D}_2(\Cgf_{2,B}(z)) &= \bar{H}_2({C}_{2}(z)).
	\end{align*}
	Finally, we get
	\begin{align*}
		\bar{D}_2(\Cgf_{2}) &= \bar{D}_2(\Cgf_{2,0} + \Cgf_{2,A} + \Cgf_{2,B}) \\
		                       &= \bar{D}_2(\Cgf_{2,0}) + \bar{D}_2(A(\Cgf_{2})) + \bar{H}_2({\Cgf}_{2}),
	\end{align*}
	which gives the new differential operator $M_{2}$ and the inhomogeneity operator $H_2$:
	\begin{align}
		\label{eq:compM2operators}
		M_2(\Cgf_{2}) := \bar{D}_2(\Cgf_{2}) - \bar{D}_2(A(\Cgf_{2})) - \bar{H}_2({\Cgf}_{2}) = \bar{D}_2(\Cgf_{2,0}) =: H_2(\Cgf_{2,0}).
	\end{align}
	Note that in analogy to~\eqref{eq:relLHk}  from the relaxed case we have here
	\begin{align*}
		H_{2}  = M_1 \cdot D \cdot (1-z).
	\end{align*}
	The computation of $M_2$ is direct with a computer algebra system (like Maple or Sage).
\end{proof}

\subsection{Compacted trees of right height at most \texorpdfstring{$k$}{k}}

Analogous to the case of relaxed trees in Section~\ref{sec:relaxedheightk} the approach from the previous section can be generalized to an arbitrary bound $k \geq 2$ for the right height. 

We introduce linear differential operators $M_k$, $k \geq 0$ which describe all differential equations
constructed for $\Cgf_k(z)$. We use the same notation as in Section~\ref{sec:relaxedheightk}.
\begin{theo}[Differential operators]
	\label{theo:diffopcompacted}
	Let $(M_k)_{k \geq 0}$ be the family of differential operators given by
	\begin{align*}
		M_0 &= (1-z) D - 1,\\
		M_1 &= (1-2z) D^2 - (3-z)D, \\
		M_{k} &= M_{k-1} \cdot D - M_{k-2} \cdot \left( D^2 \cdot z - z D\right), \qquad k \geq 2. 
	\end{align*}
	Then the exponential generating function $\Cgf_k(z)$ of compacted binary trees with right height at most $k$ satisfies
	\begin{align*}
		M_k \cdot \Cgf_k &= 0.
	\end{align*}
\end{theo}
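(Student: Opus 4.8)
The plan is to argue by induction on $k$, following the template of Theorem~\ref{theo:diffoprelaxed} for relaxed trees, with the cherry operator $K$ taking over the role played there by the plain ``add a root with two pointers'' operation, since $K$ is the only operation that feels the uniqueness constraint. As in the relaxed case I would carry along a companion family $H_k$ and prove the stronger inhomogeneous statement
\begin{align*}
	M_k \cdot \Cgf_k &= H_k \cdot \Cgf_0, & H_k &= M_{k-1} \cdot D \cdot (1-z),
\end{align*}
from which the homogeneous claim follows upon inserting $\Cgf_0(z) = \frac{1}{1-z}$, exactly as before. The invariant $H_k = M_{k-1}\cdot D \cdot (1-z)$ is already visible for $k=2$ in the proof of Theorem~\ref{theo:C2}, where $\bar D_2 = M_1 \cdot D \cdot (1-z) = H_2$. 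The base cases $M_0,M_1,M_2$ are supplied by $\Cgf_0(z)=\frac{1}{1-z}$, Theorem~\ref{theo:C1} and Theorem~\ref{theo:C2}.

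For the inductive step I would decompose $\Cc_k$ by the number $\ell$ of right edges from level $0$ to level $1$, cut at the first such edge, and move the sequence following it down onto the next lower level, producing a $\Cc_{k-1}$-like object; this is exactly the surgery of Proposition~\ref{prop:C2l} illustrated in Figure~\ref{fig:R21decomp}. Since the moved sequence now ends in a node carrying two pointers, uniqueness forces the cherry operator: the new initial condition now involves $K(\Cgf_{k,\ell-1})$, not merely $(z\,\cdot)'$. As in Proposition~\ref{prop:C2l} I would split into the two cases (A) and (B) according to whether the topmost level is reached. These decide whether a genuine cherry (a double pointer) is required or a single pointer suffices, and they are what the two summands
\begin{align*}
	K(C) &= z(zC)' - \int z\,C'(z)\,dz
\end{align*}
keep track of.

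Translating these operations into operators and summing over $\ell \ge 1$, the decomposition $\Cgf_k = \sum_{\ell\ge0}\Cgf_{k,\ell}$ telescopes and the inductive differential equation for $\Cc_{k-1}$ can be invoked to eliminate the intermediate initial condition, just as the sum over $\ell$ collapsed the inhomogeneity in the proof of Theorem~\ref{theo:diffoprelaxed}. I expect the outcome to be the exact analog of the relaxed relation~\eqref{eq:relLHk}, namely $H_k = M_{k-1}\cdot D\cdot(1-z)$ together with
\begin{align*}
	M_k &= M_{k-1}\cdot D - H_{k-1}\cdot \tfrac{1}{1-z}\cdot\Bigl(D\cdot z - \textstyle\int z\,D\,(\cdot)\,dz\Bigr),
\end{align*}
where the middle factor is the relaxed double pointer $D\cdot z$ augmented by the cherry correction $-\int z\,(\cdot)'\,dz$. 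Substituting $H_{k-1} = M_{k-2}\cdot D\cdot(1-z)$ cancels the sequence factor $\frac{1}{1-z}$, and since $D\cdot D\cdot z = D^2 z$ while $D\cdot\int z\,D = z\,D$, this simplifies to $M_k = M_{k-1}\cdot D - M_{k-2}\cdot(D^2 z - zD)$, as claimed.

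The step I expect to be the main obstacle is the combinatorial bookkeeping that produces precisely this middle operator. The cherry correction $-\int z\,C'\,dz$ is an inverse of $D$, so turning the surgery into a polynomial-coefficient recursion requires careful noncommutative manipulation using $D\,z = z\,D + 1$, and — crucially — the two cases (A) and (B), which in the relaxed setting did not occur, must assemble into the single compact correction $-(D^2 z - zD)$ without leaving a residual inhomogeneity. Keeping the resulting equation homogeneous is exactly the delicate point flagged in the remark after Proposition~\ref{prop:R21}: one must subtract the shifted generating function rather than the bare leaf term. Because the final operator relation has already been checked for $k=2$ against the explicit $M_2$ of Theorem~\ref{theo:C2}, I would validate each step of the general induction against that computation before trusting it.
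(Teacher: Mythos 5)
Your plan is correct and follows essentially the same route as the paper: induction on $k$ via the inhomogeneous form $M_k\cdot \Cgf_k = H_k\cdot \Cgf_0$ with $H_k = M_{k-1}\cdot D\cdot(1-z)$, the cut-at-the-first-right-edge surgery generalizing Proposition~\ref{prop:C2l} with its cases (A) and (B), and the operator algebra using $K = zDz - IzD$ and $S^{-1}=(1-z)$ to collapse everything to $M_{k-1}\cdot D - M_{k-2}\cdot(D^2z - zD)$. Your intermediate expression $M_{k-1}\cdot D - H_{k-1}\cdot\frac{1}{1-z}\cdot(Dz - IzD)$ is exactly what the paper's $\bar D_k - \bar D_k\cdot A - \bar H_k$ reduces to after cancellation, so the two derivations agree.
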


\begin{proof}
	The ideas are the same as the ones introduced in the proof of Theorem~\ref{theo:diffoprelaxed}:
	In an instance of $\Cc_k$ we cut at the first right edge in the spine from level $0$ to level $1$.
	Then, the same decomposition as in the case $k=2$ can be applied (like in Figure~\ref{fig:R21decomp}). 
	
	In particular, first generalizing the results of Propositition~\ref{prop:C2l} we get
	\begin{align*}
		\bar{D}_k &= M_{k-1} \cdot D \cdot S^{-1} \\
		\bar{H}_k &= \left( \bar{D}_{k-1} - M_{k-1} \right) (D \cdot z + S \cdot K ),
	\end{align*}
	which together with~\eqref{eq:compM2operators} implies on the level of operators
	\begin{align*}
		M_{k} &= \bar{D}_k - \bar{D}_k \cdot A - \bar{H}_k \\
				  &= M_{k-1} \cdot D - M_{k-2} \cdot \left(D^2 z - z D\right).
	\end{align*}
	Recall for the second equality that $D$ and $I$ are inverse to each other, and 
	that $S^{-1} = (1-z)$, and $K = zDz - IzD$.
\end{proof}

The first few differential equations are
\begin{equation*}
	(1-2z) \frac{d^2}{dz^2} \Cgf_1(z) + (z-3) \frac{d}{dz}\Cgf_1(z) = 0,
\end{equation*}
\begin{equation*}
	(z^2 - 3z + 1) \frac{d^{3}}{dz^{3}} \Cgf_2(z) - (z^2-6z+6) \frac{d^2}{dz^2} \Cgf_2(z) - (2z-3) \frac{d}{dz} \Cgf_2(z) = 0,
\end{equation*}
\begin{equation*}
	(3z^2-4z+1) \frac{d^{4}}{dz^{4}} \Cgf_3(z) - (4z^2-18z+10) \frac{d^{3}}{dz^{3}} \Cgf_3(z) + (z^2-12z+14) \frac{d^{2}}{dz^{2}}\Cgf_3(z) + (z-3) \frac{d}{dz} \Cgf_3(z) = 0.
\end{equation*}

\begin{theo}[Properties of $M_k$]
	\label{theo:Ekprop}
	The operator $M_k$ is a linear differential operator of order~$k+1$ satisfying
	\begin{align*}
		M_k &= m_{k,k}(z) D^{k+1} 
						+ m_{k,k-1}(z) D^{k} 
						+ \ldots
						+ m_{k,0}(z) D 
						+ m_{k,-1}(z) ,
	\end{align*}
	where the $m_{k,i}(z)$ are polynomials given by the following recurrence relation for $k \geq 2$
	\begin{align*}
		m_{k,-1}(z) &= 0,\\
		m_{k,0}(z) &= 
			\begin{cases}
				-2z+3, & \text{ for $k$ even,}\\
				z-3, & \text{ for $k$ odd,}
			\end{cases}\\
		m_{k,i}(z) &= m_{k-1,i-1}(z) + (i+1) m_{k-2,i}(z) \\
		           &\qquad + (z-i-2) m_{k-2,i-1}(z) - z m_{k-2,i-2}(z), \qquad 1 \leq i \leq k-1,\\
		m_{k,k}(z) &= m_{k-1,k-1}(z) - z m_{k-2,k-2}(z), \\
		m_{k,i}(z) &= 0, \qquad i > k.
	\end{align*}
	The initial polynomials are $m_{0,-1}(z) = -1$, $m_{0,0} = 1-z$, $m_{1,-1}=0$, $m_{1,0}=z-3$, and $m_{1,1}(z) = 1-2z$. The leading coefficients $m_{k,k}(z)$ are the same as $\ell_{k,k}(z)$ from the relaxed case.
\end{theo}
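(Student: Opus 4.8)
The plan is to argue by induction on $k$, propagating the shape of $M_k$ through the defining recurrence $M_k = M_{k-1}\cdot D - M_{k-2}\cdot(D^2 z - zD)$ of Theorem~\ref{theo:diffopcompacted} and comparing the coefficients of each power of $D$. The base cases $k=0$ and $k=1$ are simply read off from the operators $M_0=(1-z)D-1$ and $M_1=(1-2z)D^2-(3-z)D$, which furnish the listed initial polynomials $m_{0,-1}=-1$, $m_{0,0}=1-z$, $m_{1,-1}=0$, $m_{1,0}=z-3$, and $m_{1,1}=1-2z$. For the step I would assume that $M_{k-1}$ and $M_{k-2}$ already have the claimed polynomial-in-$D$ form, of orders $k$ and $k-1$ respectively, and then expand the right-hand side of the recurrence into normal form $\sum_i (\cdots) D^{i+1}$.

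The computational heart is normal-ordering the operator $D^2 z - zD$. Using the commutation rule $D^n z = z D^n + n D^{n-1}$, which follows from the Leibniz expansion $D^n(zf)=z f^{(n)}+n f^{(n-1)}$, one has $D^2 z = z D^2 + 2D$ and therefore $D^2 z - zD = z D^2 + (2-z)D$. Writing $M_{k-2}=\sum_i m_{k-2,i}(z)D^{i+1}$ and applying the same rule once more in the form $D^{i+1}z = z D^{i+1}+(i+1)D^i$, I would obtain
\begin{align*}
 D^{i+1}\bigl(z D^2 + (2-z)D\bigr) = z D^{i+3} + (i+3-z)D^{i+2} - (i+1)D^{i+1}.
\end{align*}
Multiplying by $m_{k-2,i}(z)$, summing over $i$, and extracting the coefficient of $D^{j+1}$ gives $z\,m_{k-2,j-2}(z)+(j+2-z)\,m_{k-2,j-1}(z)-(j+1)\,m_{k-2,j}(z)$ for the contribution of this summand, while $M_{k-1}\cdot D$ merely shifts indices and contributes $m_{k-1,j-1}(z)$ to the coefficient of $D^{j+1}$. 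Subtracting the two produces precisely the stated recurrence for $m_{k,i}(z)$ in the range $1\le i\le k-1$.

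The boundary data then fall out by inspection of this recurrence. Specialising to $i=k$, the terms carrying $m_{k-2,k}$ and $m_{k-2,k-1}$ vanish, since both indices exceed the order $k-2$ of $M_{k-2}$, leaving $m_{k,k}(z)=m_{k-1,k-1}(z)-z\,m_{k-2,k-2}(z)$; this is the same recurrence with the same initial values as $\ell_{k,k}(z)$ in Theorem~\ref{theo:Dkprop}, so the two leading coefficients coincide and $m_{k,k}\not\equiv 0$, which pins the order of $M_k$ at exactly $k+1$ and forces $m_{k,i}=0$ for $i>k$. Since both $M_{k-1}\cdot D$ and $M_{k-2}\cdot(z D^2+(2-z)D)$ are composites ending in $D$, neither contributes an order-zero (multiplication) term, whence $m_{k,-1}(z)=0$ for $k\ge 2$. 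Finally, for $k\ge 4$ the recurrence at $i=0$ collapses to $m_{k,0}(z)=m_{k-2,0}(z)$ once $m_{j,-1}\equiv 0$ is used, so $m_{k,0}$ depends only on the parity of $k$; comparing the two seed values $m_{2,0}=-2z+3$ and $m_{3,0}=z-3$, read off from $M_2$ and $M_3$, with this periodicity yields the stated parity-dependent closed form.

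I expect the only delicate point to be the operator bookkeeping in the induction step: keeping the index shifts $i\mapsto i+3,\,i+2,\,i+1$ and the accompanying signs consistent when normal-ordering $D^{i+1}z$. Everything else is a mechanical comparison of coefficients, and the agreement with the explicitly computed operators $M_1$, $M_2$, $M_3$ provides a convenient sanity check along the way.
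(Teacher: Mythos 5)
Your proposal is correct and follows essentially the same route as the paper: the paper proves this result by induction on the recursive definition of $M_k$ together with a coefficient comparison (explicitly declining to write out the ``tedious calculations''), and your normal-ordering of $D^{i+1}z$ via $D^{n}z = zD^{n}+nD^{n-1}$ is exactly the computation being omitted. The index bookkeeping, the boundary cases $i=k$, $i=0$, $i=-1$, and the identification of $m_{k,k}$ with $\ell_{k,k}$ all check out.
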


\begin{proof}
	The proof is analogous to the one of Theorem~\ref{theo:Dkprop}. We omit the tedious calculations.
\end{proof}

It may seem artificial to start the second index at $-1$. However, the corresponding polynomials are equal to $0$ except when $k=0$. Thus, we are actually dealing with a differential equation of order $k$ in $F'(z)$. Another advantage is that the leading polynomial $m_{k,k}(z)$, which is the same as the one in the relaxed case $\ell_{k,k}(z)$, has the same indices.

Following the approach used for relaxed trees,
we then need to reveal the structure of the indicial polynomial. 

In order to compute the value $\delta_1 = \lim_{z \to \zeta}(z-\zeta)a_1(z)$
(compare with~\eqref{eq:difeqmerogen}),
we need the following result on $m_{k,k-1}(z)$.

\begin{lemma}[Transformed $m_{k,k-1}(z)$]
	\label{lem:chebyshevmkk1}
	For the coefficient $m_{k,k-1}(z)$ of the operator $M_k$ from Theorem~\ref{theo:Ekprop},
	we get
	\begin{align*}
		m_{k,k-1}(z) &= z^{\frac{k+2}{2}} h_{k+2}\left(\frac{1}{2 \sqrt{z}}\right),
	\end{align*}
	where 
	\begin{align*}
		h_k(z) &= \frac{\left(k-3 - 2(k^2+k-2)z^2\right) T_k(z) + \left( 1 + 2(k-1)z^2 \right) U_k(z)}{2(z^2-1)},
	\end{align*}
	and $T_k(z)$ and $U_k(z)$ are the Chebyshev polynomials of first and second kind, respectively.
\end{lemma}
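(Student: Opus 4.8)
The plan is to mirror the strategy that produced the leading coefficient in Lemma~\ref{lem:chebyshev}: first isolate a linear recurrence satisfied by the subleading coefficients $m_{k,k-1}(z)$, then pass to the Chebyshev variable via $z=1/(4w^2)$, and finally verify the proposed closed form by induction. The main work is the last, purely algebraic, step.

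First I would specialize the recurrence of Theorem~\ref{theo:Ekprop} to the case $i=k-1$. In the term $(i+1)m_{k-2,i}(z)$ the second index $k-1$ exceeds the first index $k-2$, so $m_{k-2,k-1}(z)=0$ and the recurrence collapses to
\begin{align*}
	m_{k,k-1}(z) = m_{k-1,k-2}(z) + (z-k-1)\,m_{k-2,k-2}(z) - z\,m_{k-2,k-3}(z), \qquad k\ge 2,
\end{align*}
where $m_{k-2,k-2}(z)=\ell_{k-2,k-2}(z)$ is already known from Lemma~\ref{lem:chebyshev}. This is an inhomogeneous two-term recurrence whose inhomogeneity is an explicit Chebyshev polynomial.

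Next, following the substitution of Lemma~\ref{lem:chebyshev}, I would set $z=1/(4w^2)$ and define $h_{k+2}(w):=(2w)^{k+2}\,m_{k,k-1}\!\left(1/(4w^2)\right)$, so that the asserted identity is exactly the equality of $h_{k+2}$ with the claimed closed form. Multiplying the recurrence by $(2w)^{k+2}$ and inserting $\ell_{k-2,k-2}(1/(4w^2))=(2w)^{-k}U_k(w)$, the homogeneous part becomes the Chebyshev three-term recurrence and the correction term becomes explicit, yielding for $j\ge 4$
\begin{align*}
	h_j(w) = 2w\,h_{j-1}(w) - h_{j-2}(w) + \bigl(1-4(j-1)w^2\bigr)U_{j-2}(w).
\end{align*}
The base cases $j=2,3$ come from the initial polynomials $m_{0,-1}(z)=-1$ and $m_{1,0}(z)=z-3$ of Theorem~\ref{theo:Ekprop}, giving $h_2(w)=-4w^2$ and $h_3(w)=2w-24w^3$, both of which agree with the proposed formula.

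It then remains to check that the proposed closed form satisfies this recurrence by induction on $j$. Writing $h_j=\frac{A_jT_j+B_jU_j}{2(w^2-1)}$ with $A_j=j-3-2(j^2+j-2)w^2$ and $B_j=1+2(j-1)w^2$, I would apply the operator $h_j-2wh_{j-1}+h_{j-2}$ and use $X_j=2wX_{j-1}-X_{j-2}$ to reduce to indices $j-1,j-2$; the $j$-dependence of $A_j,B_j$ leaves the correction terms $2w(A_j-A_{j-1})T_{j-1}+(A_{j-2}-A_j)T_{j-2}$ and the analogous $U$-terms. Converting $T$'s to $U$'s via $T_n=U_n-wU_{n-1}$ and $U_{n-3}=2wU_{n-2}-U_{n-1}$, I expect the coefficient of $U_{j-1}$ to vanish identically and the coefficient of $U_{j-2}$ to reproduce $(w^2-1)(1-4(j-1)w^2)$, matching the right-hand side after clearing $2(w^2-1)$. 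The main obstacle is simply the bookkeeping of this coefficient comparison; the only subtle point is the apparent pole at $w=\pm1$, which I would handle by noting that $(2w)^{k+2}m_{k,k-1}(1/(4w^2))$ is a genuine polynomial, so the factor $w^2-1$ must cancel — equivalently, the whole verification can be carried out as an identity in the rational function field $\C(w)$, where this spurious factor disappears automatically.
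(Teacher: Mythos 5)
Your proposal is correct and follows essentially the same route as the paper: both extract the two-term inhomogeneous recurrence for $m_{k,k-1}(z)$ from Theorem~\ref{theo:Ekprop}, apply the substitution $z=1/(4w^2)$ to obtain the perturbed Chebyshev recurrence $h_j=2wh_{j-1}-h_{j-2}+(1-4(j-1)w^2)U_{j-2}$ with base cases $h_2,h_3$, and then confirm the closed form (the paper via a general ansatz in the solution space spanned by $k^mT_k$ and $k^mU_k$, you by direct inductive verification). The verification does close as you predict, the only blemish being a harmless factor-of-$2$ slip: after clearing denominators the coefficient of $U_{j-2}$ comes out as $2(w^2-1)\bigl(1-4(j-1)w^2\bigr)$, not $(w^2-1)\bigl(1-4(j-1)w^2\bigr)$.
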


\begin{proof}
		From Theorem~\ref{theo:Ekprop} we get the recurrence relation of $m_{k,k-1}(z)$:
		\begin{align*}
			m_{k,k-1}(z) = m_{k-1,k-2}(z) - z m_{k-2,k-3}(z) + (z-k-1)m_{k-2,k-2}(z).
		\end{align*}
		Its structure is similar to the one of $m_{k,k}(z)$, but with an additional perturbation $(z-k-1)m_{k-2,k-2}(z)$. Transforming it in the same way as the one of $m_{k,k}(z)$ we get with 
		$$
			h_{k+2}(z) := (2z)^{k+2} m_{k,k-1}\left(\frac{1}{4z^2}\right),
		$$
		for $k \geq 0$ the recurrence
		\begin{align*}
			h_{k+2}(z) = 2z h_{k+1}(z) - h_{k}(z) + \left(1 - 4z^2 (k+1) \right) U_{k}(z).
		\end{align*}	
		From the theory of recurrences with constant coefficients (with respect to $k$) \cite[Chapter~$4$]{KauersPaule11} we get that the solution space is generated by 
		$
			U_k(z), 
			T_k(z), 
			k U_k(z), 
			k T_k(z), 
			k^2 U_k(z),
			k^2 T_k(z).
		$
		Making an ansatz and comparing coefficients gives the result.
\end{proof}

\begin{prop}
	\label{prop:indicialpolycompacted}
	Let $I_k(\alpha) = \alpha^{\underline{k+1}} + \delta_1 \alpha^{\underline{k}} + \cdots + \delta_{k+1}$ be the indicial polynomial  of the $k$-th differential equation, and let $\rho_k$ be the smallest real root of $m_{k,k}(z)$. Then, we have $\delta_i = 0$ for $i > 1$, and $\delta_1 = \frac{m_{k,k-1}(\rho_k)}{m'_{k,k}(\rho_k)}$. Furthermore, we have
	\begin{align*}
		\delta_1 &= \frac{k}{2} + 1 - \frac{1}{k+3} - \left(\frac{1}{4} - \frac{1}{k+3}\right)\frac{1}{\cos^2\left(\frac{\pi}{k+3}\right)}. 
		\end{align*}
	The indicial polynomial is given by $I_k(\alpha) = \alpha^{\underline{k}} (\alpha-(k-\delta_1))$.
\end{prop}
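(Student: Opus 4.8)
The plan is to mirror the relaxed-case argument of Proposition~\ref{prop:indicialpoly}, now feeding in the two Chebyshev descriptions of the leading coefficients supplied by Lemma~\ref{lem:chebyshev} (recall $m_{k,k}=\ell_{k,k}$) and Lemma~\ref{lem:chebyshevmkk1}. Writing $M_k\cdot C_k=0$ in the normalized form~\eqref{eq:difeqmerogen} gives coefficients $a_i(z)=m_{k,k-i}(z)/m_{k,k}(z)$ for $i=1,\dots,k+1$. Since by Lemma~\ref{lem:lkkproperties} the smallest root $\rho_k$ of $m_{k,k}$ is simple and each $m_{k,k-i}$ is a polynomial, every $a_i$ has at most a simple pole at $\rho_k$; thus $\omega_{\rho_k}(a_i)\le 1\le i$, so $\rho_k$ is a regular singularity (Definition~\ref{def:regularsing}) and, by Definition~\ref{def:indicialpoly}, $\delta_i=\lim_{z\to\rho_k}(z-\rho_k)^i a_i(z)=0$ for every $i\ge 2$. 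This reduces the indicial polynomial to $I_k(\alpha)=\alpha^{\underline{k+1}}+\delta_1\alpha^{\underline{k}}$, and the factorization $\alpha^{\underline{k+1}}=\alpha^{\underline{k}}(\alpha-k)$ immediately yields $I_k(\alpha)=\alpha^{\underline{k}}\bigl(\alpha-(k-\delta_1)\bigr)$, the final assertion.

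It then remains to compute $\delta_1$. Because $\rho_k$ is a simple zero of $m_{k,k}$, de l'Hospital's rule gives $\delta_1=\lim_{z\to\rho_k}(z-\rho_k)\,a_1(z)=m_{k,k-1}(\rho_k)/m_{k,k}'(\rho_k)$, which is the middle claim. To evaluate this quotient I would change variables to $w=1/(2\sqrt z)$, so that by Lemma~\ref{lem:lkkproperties} the point $\rho_k$ corresponds to $w_0:=\cos(\pi/(k+3))$, a simple root of $U_{k+2}$. Differentiating $m_{k,k}(z)=z^{(k+2)/2}U_{k+2}(1/(2\sqrt z))$ and using $U_{k+2}(w_0)=0$ annihilates the first product-rule term, leaving $m_{k,k}'(\rho_k)=-\tfrac14\rho_k^{(k-1)/2}U_{k+2}'(w_0)$; likewise $m_{k,k-1}(\rho_k)=\rho_k^{(k+2)/2}h_{k+2}(w_0)$ by Lemma~\ref{lem:chebyshevmkk1}, where again the $U_{k+2}(w_0)$-summand in $h_{k+2}$ drops out. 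Hence $\delta_1=-4\rho_k^{3/2}\,h_{k+2}(w_0)/U_{k+2}'(w_0)$.

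The crux is the evaluation of the surviving Chebyshev quantities at $w_0=\cos\theta$, $\theta=\pi/(k+3)$. From $T_n(\cos\theta)=\cos(n\theta)$ together with $(k+2)\theta=\pi-\theta$ and $(k+3)\theta=\pi$ I obtain $T_{k+2}(w_0)=-w_0$ and $T_{k+3}(w_0)=-1$; differentiating $U_{k+2}(\cos\theta)=\sin((k+3)\theta)/\sin\theta$ and inserting $\sin((k+3)\theta)=0$ gives $U_{k+2}'(w_0)=(k+3)/(1-w_0^2)$. Substituting these, the simplification $(k+2)^2+(k+2)-2=(k+1)(k+4)$, and $\rho_k^{3/2}=(4w_0^2)^{-3/2}=1/(8w_0^3)$ collapses $\delta_1$ to $\tfrac{(k+1)(k+4)}{2(k+3)}-\tfrac{k-1}{4(k+3)\cos^2\theta}$. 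A final rewriting, using $\tfrac{(k+1)(k+4)}{2(k+3)}=\tfrac k2+1-\tfrac1{k+3}$ and $-\tfrac{k-1}{4(k+3)}=-\bigl(\tfrac14-\tfrac1{k+3}\bigr)$, puts it in the stated form. The only genuinely delicate point is arranging the Chebyshev evaluations so that the apparent $0/0$ cancellations (coming from $U_{k+2}(w_0)=0$ and the $(w_0^2-1)$ denominators in both $h_{k+2}$ and $U_{k+2}'$) are carried out cleanly; the rest is routine bookkeeping paralleling Proposition~\ref{prop:indicialpoly}.
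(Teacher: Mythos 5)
Your proposal is correct and follows essentially the same route as the paper: reduce to $\delta_i=0$ for $i\ge 2$ via the simple pole at $\rho_k$, obtain $\delta_1=m_{k,k-1}(\rho_k)/m'_{k,k}(\rho_k)$ by de l'Hospital, and evaluate both quantities through the Chebyshev representations of Lemmas~\ref{lem:chebyshev} and~\ref{lem:chebyshevmkk1} using $U_{k+2}(w_0)=0$, $T_{k+2}(w_0)=-w_0$, $T_{k+3}(w_0)=-1$. The only cosmetic difference is that you derive $U_{k+2}'(w_0)$ from the trigonometric form rather than citing the identity $U_k'=((k+1)T_{k+1}-zU_k)/(z^2-1)$; the resulting computation and final simplification agree with the stated formula.
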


\begin{proof}
	The first results are analogous to the ones in Proposition~\ref{prop:indicialpoly}: First, because of Lemma~\ref{lem:lkkproperties} we have $\delta_i=0$ for $i > 0$. Second, the expression for $\delta_1$ is the same 
	as~\eqref{eq:delta1explicit}, and follows from de l'Hospital's rule. 	
	Thus, the indicial polynomial is given by $I_k(\alpha) = \alpha^{\underline{k+1}} + \delta_1\alpha^{\underline{k}}$.
	
	We start with two simplifications for the root $x_k = \cos(\frac{\pi}{k+1})$ of $U_k(z)$ when inserted into $T_k(z)$. By the explicit expression 
	$
		T_k(x) = \cos \left( k \arccos(x)\right), \text{ for } |x| \leq 1,
	$
	we get
	\begin{align*}
		T_{k}(x_{k}) &= - \cos\left(\frac{\pi}{k+1} \right) = - x_{k}, && \text{ and } &
		T_{k+1}(x_{k}) &= -1.
	\end{align*}

	First, we consider $m_{k,k-1}(z)$. By Lemma~\ref{lem:chebyshevmkk1} we directly get
	\begin{align*}
		m_{k,k-1}(\rho_k) &= -\rho_{k}^{\frac{k+2}{2}} \frac{(k-1)x_{k+2} - 2((k+2)^2+k) x_{k+2}^3}{2 (x_{k+2}^2-1)},
	\end{align*}
	where $\rho_k = \frac{1}{4x_{k+2}^2}$, and recall that $U_{k}(x_k) = 0$.
	
	Second, we consider the derivative of $m_{k,k}(z)$. Therefore, we use the following connection between Chebyshev polynomials of the first and second kind, see~\cite[Section~18.9]{NIST:DLMF}:
	\begin{align*}
		U'_k(z) = \frac{(k+1) T_{k+1}(z) - z U_k(z)}{z^2-1}. 
	\end{align*}	
	Thus, by Lemma~\ref{lem:chebyshev} we get 
	\begin{align*}
		m'_{k,k}(\rho_k) &= \frac{\rho_k^{\frac{k-1}{2}}}{4} \frac{k+3}{x_{k+2}^2-1}.
	\end{align*}
	Combining these results shows the claim.
\end{proof}

We arrive at our main result for compacted binary trees, Theorem~\ref{theo:compactedmain}.

\begin{proof}[Proof of Theorem~\ref{theo:compactedmain}]
	The proof follows the same lines as the one of Theorem~\ref{theo:relaxededmain}. In particular, the third case of Theorem~\ref{theo:odeasymptregsing} gives the asymptotic result, as $\delta_1$ is irrational for all $k \in \N$.
\end{proof}

In contrast to relaxed trees, the asymptotics of compacted trees involves in general an irrational critical exponent. In Table~\ref{tab:compcritexp} we list their first explicit values. 

\begin{table}[ht]
	\begin{center}
	\begin{tabular}{|c||c|c||c|c|c||c|c|c|}
		\hline $k$      & $r_k$ & $r_k \approx$ & $\kappa_k \approx$ & $\alpha_k$ & $\alpha_k \approx$ & $\gamma_k \approx$ & $\beta_k$ & $\beta_k \approx$ \\
		\hline\hline 
		$1$ & $2$ & $2.000$ & 
		$0.708$ & $-\frac{3}{4}$ & $-0.750$ & 
		$0.564$ & $-\frac{1}{2}$ & $-0.5$ 
		\\
		$2$ & $4 \cos(\frac{\pi}{5})^2$ & $2.618$ & 
		$0.561$ & $-\frac{6}{5}-\frac{1}{20\cos(\frac{\pi}{5})^2}$ & $-1.276$ & 
		$0.447$ & $-1$ & $-1.0$ 
		\\
		$3$ & $3$ & $3.000$ & 
		$0.605$ & $-\frac{16}{9}$ & $-1.778$ & 
		$0.493$ & $-\frac{3}{2}$ & $-1.5$ 
		\\
		$4$ & $4 \cos(\frac{\pi}{7})^2$ & $3.246$ & 
		$0.873$ & $-\frac{15}{7}-\frac{3}{28\cos(\frac{\pi}{7})^2}$ & $-2.275$ & 
		$0.726$ & $-2$ & $-2.0$ 
		\\
		$5$ & $4 \cos(\frac{\pi}{8})^2$ & $3.414$ &
		$1.625$ & $-\frac{21}{8}-\frac{1}{8\cos(\frac{\pi}{8})^2}$ & $-2.772$  & 
		$1.379$ & $-\frac{5}{2}$ & $-2.5$ 
		\\
		$6$ & $4 \cos(\frac{\pi}{9})^2$ & $3.532$ & 
		$3.782$ & $-\frac{28}{9}-\frac{5}{36\cos(\frac{\pi}{9})^2}$ & $-3.268$ & 
		$3.260$ & $-3$ & $-3.0$ 
		\\
		$7$ & $4 \cos(\frac{\pi}{10})^2$ & $3.618$ & 
		$10.708$ & $-\frac{18}{5}-\frac{3}{20\cos(\frac{\pi}{10})^2}$ & $-3.766$  & 
		$9.350$ & $-\frac{7}{2}$ & $-3.5$ 
		\\
		\hline
  \end{tabular}
\end{center}
\caption{\small The number $\cgf_{k,n}$ ($\rgf_{k,n}$) of compacted trees (respectively relaxed trees)
	with $n$ internal nodes and right height at most $k$ is asymptotically equal to $\kappa_k n! r_k^{n} n^{\alpha_k}$
	(resp.~$\gamma_k n! r_k^{n} n^{\beta_k}$) with $r_k = \rho_k^{-1}$.
}
\label{tab:compcritexp}
\end{table}

\section{Conclusion}
\label{sec:conclusion}

In this paper we solved the asymptotic counting problem for compacted and relaxed binary trees of bounded right height. In a compacted binary tree repeatedly occurring subtrees have been deleted and replaced by pointers to the first appearance, and hence every subtree is unique. By doing so, the tree structure is destroyed and replaced by a directed acyclic graph. In a relaxed binary tree the uniqueness condition of subtrees is omitted. 

The difficulty of this counting problem is founded in the compaction procedure. A compacted binary tree of size $n$, where the size is the number of internal nodes, arises from a binary tree whose size is between $n$ and $2^n$.
Our main results with regard to the general counting problem, are recurrence relations for compacted and relaxed binary trees in Theorem~\ref{theo:comprecursion} and Corollary~\ref{coro:relaxedrecurrence}, respectively.

Due to their superexponential growth of order $\Theta(n! \rho_k^{-n} n^{\alpha_k})$ with $\rho_k \approx 1/4$, exponential generating functions are the natural choice. Our second main contribution is the development of a calculus on such exponential generating functions modeling the structural properties of compacted trees in Section~\ref{sec:operations}. 

Resulting from these ideas, we were able to give our last main result: the derivation of ordinary differential equations for relaxed and compacted binary trees of bounded right height. The right height of a tree is the maximal number of right edges from the root to any leaf. Furthermore, we extracted the asymptotics by extending the theory of coefficient extractions of ordinary differential equations with polynomial coefficients in Theorem~\ref{theo:odeasymptregsing}. This yielded the sought asymptotics in Theorems~\ref{theo:relaxededmain} and \ref{theo:compactedmain}. 

Thereby we discovered quite \emph{exotic} families of enriched trees. The radii of convergence are in both cases algebraic numbers, and in the case of compacted trees, also the critical exponents are (compare Table~\ref{tab:compcritexp} for the first $7$ families). 
Note that our techniques do not directly give access to the constants $\kappa_k$ and $\gamma_k$. 
They can be numerically computed for any specific case from the respective differential equations from the basis of asymptotic solutions like given on page~\pageref{eq:relaxedsols} at the end of Section~\ref{sec:relaxed}.
For more details see~\cite{Mezzarobba11}. 

It remains an open problem to find the asymptotics of relaxed and compacted trees without any restrictions. For our methods it was crucial that the right height was bounded by a fixed value $k$. The limit $k \to \infty$ is therefore not computable. In particular, we showed that the radius of convergence $\rho_k$ converges to $1/4$. But the subexponential growth is of the shape $n^{- \lambda k}$ for $\lambda>0$. Thus, it would converge to $0$. Hence, the limits $n\to \infty$ and $k \to \infty$ are not interchangeable. 
However, in Corollary~\ref{coro:mainexpgrowth} we were able to show that the exponential growth of the number of relaxed and compacted binary trees is equal to $4^n$. This behavior remains a topic of future research.

Finally, it is interesting to compare the number of compacted trees to the number of relaxed trees
in Corollary~\ref{coro:compamongrelaxed}. We showed that their number is negligible for large $n$
and derived a precise quantitative result.

Many new questions arise after our analysis. It would be interesting to consider parameters such as their average height or average right height. Furthermore, these results gave us generating functions of a large family of DAGs which should allow a uniform random generation of such trees. Such results are interesting in computer science and the analysis of algorithms, as DAGs are efficient data structures and widely-used. Among other things, new algorithms need to be tested on very large and non-trivial elements of an efficiently computable class of DAGs.

\addcontentsline{toc}{section}{Acknowledgments}
\section*{Acknowledgments}
\label{sec:ack}
We would like to thank Christian Krattenthaler for pointing out the connection with the Chebyshev polynomials, 
and Marc Mezzarobba for computing the numerical values of the first few constants $\kappa_k$ and $\gamma_k$ to high precision. 
The authors also thank the anonymous referees for their comments and suggested improvements.

\addcontentsline{toc}{section}{References}
\bibliographystyle{abbrv}
\bibliography{mybib}
\label{sec:biblio}

\end{document}